% !TEX TS-program = pdflatex
% !TEX encoding = UTF-\forall Unicode

\documentclass[11pt]{article}
\usepackage[utf8]{inputenc}

%%% PAGE DIMENSIONS
\usepackage{geometry} % to change the page dimensions
\geometry{a4paper}
\usepackage{graphicx} % support the \includegraphics command and options

%%% PACKAGES
\usepackage{tabularx}
\newcolumntype{Z}{>{\centering\let\newline\\\arraybackslash\hspace{0pt}}X}
\usepackage{ulem}
\usepackage{booktabs} % for much better looking tables
\usepackage{array} % for better arrays (eg matrices) in maths
\usepackage{paralist} % very flexible & customisable lists (eg. enumerate/itemize, etc.)
\usepackage{verbatim} % adds environment for commenting out blocks of text & for better verbatim
\usepackage{subfig} % make it possible to include more than one captioned figure/table in a single float
\usepackage{hyperref}
\usepackage{amsmath}
\usepackage{amssymb}
\usepackage{authblk}
\usepackage{stmaryrd}
\usepackage{enumitem}
\setlist{nosep}
\usepackage{bbm}
\usepackage{mathtools}
\usepackage{textcomp}
\usepackage{xcolor}
\usepackage{listings}
\usepackage{float}
\usepackage{centernot}
\usepackage[mathscr]{euscript}
\usepackage{tikz-cd}
\usepackage{rotating}

\newcommand{\Ccal}{\mathcal{C}}
\newcommand{\Dcal}{\mathcal{D}}
\newcommand{\Ecal}{\mathcal{E}}
\newcommand{\Fcal}{\mathcal{F}}

\newcommand{\Ocal}{\mathcal{O}}

\newcommand{\Nbb}{\mathbb{N}}

\newcommand{\Set}{\mathbf{Set}}
\newcommand{\Setswith}[1]{\PSh(#1)}

\newcommand{\Hom}{\mathrm{Hom}}
\newcommand{\HOM}{\mathcal{H}\! \mathit{om}}

\newcommand{\op}{^{\mathrm{op}}}

\newcommand{\too}{\twoheadrightarrow}

\DeclareMathOperator{\id}{id}

\DeclareMathOperator{\Fix}{Fix}
\DeclareMathOperator{\PSh}{\mathbf{PSh}}
\DeclareMathOperator{\Sh}{\mathbf{Sh}}

\DeclareMathOperator{\End}{End}

%TODO: Think about a notation for the category M as opposed to the monoid M

 % \adj{L:X -> Y:R}

\tikzset{
  no line/.style={draw=none,
    commutative diagrams/every label/.append style={/tikz/auto=false}},
  from/.style args={#1 to #2}{to path={(#1)--(#2)\tikztonodes}}
	}
\tikzset{symbol/.style={draw=none, every to/.append style={edge node = {node [sloped, allow upside down, auto=false] {$#1$}}}}}

%%%THEOREM FORMATTING
\usepackage{amsthm}
\newtheorem{thm}{Theorem}[section]
\newtheorem{theorem}[thm]{Theorem}
\newtheorem{prop}[thm]{Proposition}
\newtheorem{proposition}[thm]{Proposition}
\newtheorem{lemma}[thm]{Lemma}
\newtheorem{crly}[thm]{Corollary}
\newtheorem{corollary}[thm]{Corollary}

\theoremstyle{definition}
\newtheorem{dfn}[thm]{Definition}
\newtheorem{definition}[thm]{Definition}
\newtheorem{xmpl}[thm]{Example}
\newtheorem{example}[thm]{Example}

\theoremstyle{remark}
\newtheorem{rmk}[thm]{Remark}
\newtheorem{remark}[thm]{Remark}

\newtheorem{fact}[thm]{Fact}

\makeatletter
\usepackage{tikz}
% #1 is a multiplier of fontsize for the minimum diameter of the circle
% #2 is the symbol to be circled.
\newcommand*\circled[2][1.6]{\tikz[baseline=(char.base)]{
    \node[shape=circle, draw, inner sep=1pt, 
        minimum height={\f@size*#1},] (char) {\vphantom{WAH1g}#2};}}
\makeatother

%%% HEADERS & FOOTERS
\usepackage{fancyhdr} % This should be set AFTER setting up the page geometry
\pagestyle{fancy} % options: empty , plain , fancy
 % customise the layout...
\lhead{}\chead{}\rhead{}
\lfoot{}\cfoot{\thepage}\rfoot{}

%%% SECTION TITLE APPEARANCE
\usepackage{sectsty}
\allsectionsfont{\sffamily\mdseries\upshape} % (See the fntguide.pdf for font help)
% (This matches ConTeXt defaults)

%%% ToC (table of contents) APPEARANCE

\usepackage[nottoc,notlof,notlot]{tocbibind} % Put the bibliography in the ToC
\usepackage[titles,subfigure]{tocloft} % Alter the style of the Table of Contents

 % No bold

\usepackage[nodayofweek]{datetime}
\longdate

\let\theta\vartheta
\let\emph\textit

\date{} % Activate to display a given date or no date (if empty),
         % otherwise the current date is printed 
% This is incompatible with arXiv because you cannot control when arXiv will recompile your file

%Remove MR data from bibliography
\AtBeginDocument{%
   \def\MR#1{}
}

\title{Monoid Properties as Invariants of Toposes of Monoid Actions}
\author{Jens Hemelaer \thanks{Department of Mathematics, University of Antwerp, 
 Middelheimlaan 1, B-2020 Antwerp (Belgium) \\ email: jens.hemelaer@uantwerpen.be} \\ Morgan Rogers \thanks{Universit\`a degli Studi dell{'}Insubria, Via Valleggio n. 11, 22100 Como CO \\ Marie Sklodowska-Curie fellow of the Istituto Nazionale di Alta Matematica \\ email: mrogers@uninsubria.it}}

\begin{document}

\maketitle

\begin{abstract}
We systematically investigate, for a monoid $M$, how topos-theoretic properties of $\Setswith{M}$, including the properties of being atomic, strongly compact, local, totally connected or cohesive, correspond to semigroup-theoretic properties of $M$.
\end{abstract}

\tableofcontents

\section*{Introduction}

Let $M$ be a monoid, viewed as a category with one object. The category of right $M$-sets, also called $M$-acts in the semigroup theory literature, is precisely the presheaf topos $\Setswith{M}$. Recently, the authors of the present work established representation theorems for toposes of the form $\Setswith{M}$; that is, toposes of right $M$-sets for a discrete monoid $M$. In \cite{TDMA}, the second named author showed using topos-theoretic methods that any topos with a surjective essential point (equivalently, admitting a monadic and comonadic functor to the topos $\Set$ of sets) is of this form. The first named author showed in \cite{TGRM} that such a topos can also be represented as the topos of equivariant sheaves on a posetal groupoid.

The topos $\Setswith{M}$ holds a great deal more structure than the monoid $M$ alone. In particular, it is the natural setting in which to define a great variety of constructions and tools for examining the subtler properties of monoids. Perhaps more significantly, being a Grothendieck topos, the category $\Setswith{M}$ can be compared, either indirectly through its properties or directly via equivalences or geometric morphisms, to other toposes. Studying these comparisons between toposes in order to get a better understanding of the structures presenting them (in this case discrete monoids) forms the backbone of the `toposes as bridges' philosophy of Caramello \cite{TST}. In this paper, we take the indirect approach, investigating correspondences between properties of the representing monoids and well-understood properties of the corresponding toposes from the topos-theoretic literature. In particular, it was noted in \cite{TDMA} that several important classes of monoid, the representing monoid is unique up to isomorphism. It follows that the properties identifying these classes are Morita-invariant, and should therefore correspond to topos-theoretic invariants, some of which we have been able to identify.

The ``purely semigroup-theoretic content'' of many of the results presented in this article have turned out to be known results, in that after deriving them we discovered references for them in existing literature. However, this is typical when establishing a category-theoretic approach to any area of mathematics: reproving elementary results in context is a necessary first step in applying topos-theoretic machinery, since it illuminates the efficacy and potential for generalisation of this approach. We believe that topos theory will ultimately be a fruitful source of new results in semigroup theory. Reciprocally, monoids shall provide a useful source of examples, properties, constructions and intuition for toposes distinct from the usual geometric and logical perspectives, and simpler than the larger context of presheaf toposes over categories with several objects. We hope this article will achieve our goal of drawing together the research communities in semigroup theory and topos theory.

There has been recent interest in the toposes $\Setswith{M}$ from a geometrical point of view. Connes and Consani, in their construction of the Arithmetic Site \cite{connes-consani} \cite{connes-consani-geometry-as}, considered the special case where $M$ is the monoid of nonzero natural numbers under multiplication. In this case, the points of $\Setswith{M}$ are related to the finite adeles in number theory. Related toposes are studied in \cite{sagnier}, \cite{arithmtop} and \cite{llb-three}. As mentioned in \cite{TGRM}, this geometric study of monoids is inspired by the idea of ``algebraic geometry over $\mathbb{F}_1$'' \cite{manin}. In this philosophy, commutative monoids are thought of as dual to affine $\mathbb{F}_1$-schemes, while the topos $\Setswith{M}$ is seen as the topos of quasi-coherent sheaves on the space corresponding to the monoid $M$, see \cite{pirashvili}.

This paper is organised as follows. In Section \ref{sec:bg} we recap the necessary topos theory and the relevant content of the papers referenced above, including the functors constituting the global sections geometric morphism which is intrinsic to a Grothendieck topos. This geometric morphism automatically has many restrictive properties in the case of $\Setswith{M}$ for any monoid $M$, which we establish. We also present a tensor-hom description of adjunctions applicable to the components of the geometric morphisms.

In the main body of the paper, Section \ref{sec:1down}, we examine the consequences of additional properties of $\Setswith{M}$ on the monoid $M$. A great number of properties of geometric morphisms in the topos theory literature are inspired from geometry, since any topological space or locale has an associated Grothendieck topos (its category of sheaves), and the global sections functor of such a topos has properties determined by the properties of the original space; we therefore accompany our exposition for monoids with the parallel analysis for topological spaces, for which results are readily available in the topos theory literature. Our arguments are guided in some cases by the examples of toposes of group actions periodically presented in Johnstone's reference text \cite{Ele}.

In the Conclusion we explain how future work will extend the results accumulated in this paper. We are aware of some properties of monoids which are expressible in terms of categorical properties of $\Setswith{M}$ but which we have not (yet) been able to express in terms of the global sections morphism; we outline these and some further properties of toposes which we intend to explore. In \cite{TDMA}, beyond the characterisation of toposes of the form $\Setswith{M}$, the stronger result of a 2-equivalence between a 2-category of monoids and a 2-category on the corresponding toposes was demonstrated. We explain how another direction of research will take advantage of this 2-equivalence, and how the results obtained in this paper might be relativised. 

\subsection*{Acknowledgements}

The second named author was supported in this work by INdAM and the Marie Sklodowska-Curie Actions as a part of the \textit{INdAM Doctoral Programme in Mathematics and/or Applications Cofunded by Marie Sklodowska-Curie Actions}. We would like to thank the organisers of the sixth Symposium on Compositional Structures conference (SYCO6) in Leicester, UK in 2019 and the Topics in Category Theory School in Edinburgh, UK in 2020 for giving us the opportunity to present early versions of this work. We would also like to thank Olivia Caramello for facilitating our collaboration on this work and for her useful insights, and to appreciate the enthusiastic correspondence from Steve Vickers following the presentation at SYCO6.

\section{Background}
\label{sec:bg}

\subsection{Toposes of Discrete Monoid Actions}
\label{ssec:toposes}

For a reader with a limited background in topos theory, a (by now classical) introductory text is Mac Lane and Moerdijk's \cite{MLM}. This in particular includes exercises (at the end of Chapter I, for example) for identifying topos-theoretic structures in toposes of the form $\Setswith{M}$ and more generally in presheaf toposes; we recall some of this structure here. For two right $M$-sets $X$ and $Y$, we will use the shorthand
\begin{equation*}
\HOM_M(X,Y) = \Hom_{\Setswith{M}}(X,Y)
\end{equation*}
to denote the set of morphisms from $X$ to $Y$.

A topos of the form $\Setswith{M}$ comes equipped with two geometric morphisms: a canonical point and its global sections morphism.
\begin{equation} \label{eq:can-point-and-global-sections}
\begin{tikzcd}
\Set \ar[r, bend left=55, "- \times M"] \ar[r, bend right=45, "{\Hom_{\Set}(M{,}-)}"',pos=.55]
\ar[r, phantom, shift left=6, "\bot", near end] \ar[r, phantom, shift right=4, "\bot", near end] &
{\Setswith{M}} \ar[l, "U"', near start] \ar[r, "C", bend left = 50] \ar[r, "\Gamma"', bend right= 40]
\ar[r, symbol = \bot, shift right = 4, near start] \ar[r, symbol = \bot, shift left = 6, near start] &
\Set \ar[l, "\Delta"', near end],
\end{tikzcd}
\end{equation}
where the functors not explicitly specified are:
\begin{itemize}
	\item The forgetful functor $U$ sending a right $M$-set to its underlying set,
	\item The global sections functor $\Gamma$ sending an $M$-set $X$ to its set
\begin{equation*}
	\Fix_M(X)= \HOM_M(1,X)
\end{equation*}
	of fixed points under the action of $M$,
	\item The constant sheaf functor $\Delta$ sending a set $Y$ to the same set with trivial $M$-action,
	\item The connected components functor $C$ sending an $M$-set $X$ to its set of components under the action of $M$ (that is, to its quotient under the equivalence relation generated by $x \sim x \cdot m$ for $x \in X$, $m \in M$).
\end{itemize}
It should also be noted that for a set $Y$, the $M$-action on $\Hom_{\Set}(M{,}Y)$ by $m \in M$ sends $f$ to $(n \mapsto f(mn))$, while the $M$ action on $Y \times M$ is by right multiplication on the $M$-component.

Recall that in general a \textbf{point} of a topos $\Ecal$ is a geometric morphism $\Set \to \Ecal$; a topos is said to have \textbf{enough points} if the inverse images functors of all its points are jointly conservative. The inverse image functor of the canonical point of $\Setswith{M}$ is the forgetful functor, which is thus automatically conservative, so every topos of this form has enough points.

While the canonical point is determined by the choice of the presenting monoid $M$, the global sections geometric morphism is uniquely determined by the topos, so that its properties are automatically Morita-invariant (that is, independent of the choice of presenting monoid). In this paper we shall primarily concern ourselves with analysis of the global sections morphism. Since the inverse image functor $\Delta$ of this morphism has an (automatically $\Set$-indexed) left adjoint $C$, the global sections morphism of $\Setswith{M}$ meets the definition of being \textbf{locally connected} found at the beginning of \cite[Section C3.3]{Ele}. We equivalently say that $\Setswith{M}$ is locally connected for any $M$. 

Toposes are cartesian closed, which is to say that for each pair of objects $P,Q$ in a topos $\Ecal$ there is an \textbf{exponential object} $Q^P$ such that for any third object $X$, we have an isomorphism 
\begin{equation} \label{eq:exponential-adjunction}
\Hom_{\Ecal}(X\times P,Q) \cong \Hom_{\Ecal}(X,Q^P) 
\end{equation}
natural in $X$ and $Q$, i.e.\ the functor $(-)^P$ is right adjoint to $- \times P$. In particular, for $X = Q^P$, the identity map on the right hand side corresponds on the left hand side to the \textbf{evaluation map} 
\begin{equation} \label{eq:evaluation-map}
\mathrm{ev}: Q^P \times P \longrightarrow Q.
\end{equation}

In $\Ecal = \Setswith{M}$, for two $M$-sets $P,Q$, the exponential $Q^P$ has as underlying set $\HOM_M(M\times P,Q)$. The right $M$-action is defined as
\begin{equation} \label{eq:exp1}
(f \cdot m)(n,p) = f(mn,p),
\end{equation} 
for $f \in Q^P$, $m,n \in M$, $p \in P$ (see \cite[I.6]{MLM}). The evaluation map is then given by
\begin{align} \label{eq:exp2}
\begin{split}
\HOM_M(M\times P,Q) \times P \longrightarrow Q \\
(f,p) \mapsto f(1,p)
\end{split}
\end{align}

If $F:\Fcal \to \Ecal$ is a functor preserving binary products, then there is a natural comparison morphism 
\begin{equation}\label{eq:theta}
\theta_{P,Q}: F(Q^P) \to F(Q)^{F(P)}
\end{equation}
obtained by applying $F$ to the evaluation map (\ref{eq:evaluation-map}), and then transposing back across the product-exponential adjunction in $\Ecal$. If $\theta_{P,Q}$ is a isomorphism for every pair of objects $P,Q$, then we say that $F$ is \textbf{cartesian-closed}, or that $F$ \textbf{preserves exponentials}. Note that the inverse image functor of a locally connected geometric morphism is always cartesian closed, by \cite[Proposition C3.3.1]{Ele}, so that in particular $\Delta$ is cartesian closed. The condition of cartesian-closedness can be weakened in two directions: either by restricting to a smaller collection of pairs $P,Q$ of objects on which $\theta_{P,Q}$ is required to be an isomorphism, or by asking that $\theta_{P,Q}$ have a property weaker than being an isomorphism. We discuss some cases of the former, applied to the connected components functor $C$, in Section \ref{ssec:exponential} and cases of the latter in Section \ref{ssec:trivial}.

Toposes also have subobject classifiers. That is, there is an object $\Omega_{\Ecal}$ in a topos $\Ecal$ equipped with a subobject $\top:1 \hookrightarrow \Omega_{\Ecal}$ such that every subobject $S \hookrightarrow A$, for $A$ an object of the topos, is the pullback of $\top$ along a unique classifying morphism $s: A \to \Omega_{\Ecal}$.
\begin{equation}
\begin{tikzcd}
S \ar[r] \ar[dr, phantom, "\lrcorner", very near start] \ar[d, hook] & 1 \ar[d,"{\top}", hook] \\
A \ar[r, "s"'] & \Omega_{\mathcal{E}}
\end{tikzcd}
\end{equation}

The subobject classifier of $\Ecal = \Setswith{M}$ is the set $\Omega$ of right ideals of $M$ equipped with the inverse image action for left multiplication, which for a right ideal $I$ and $m \in M$ is defined by $I\cdot m = \{m'\in M \mid mm' \in I\}$. The morphism $\top: 1 \to \Omega$ identifies $M \in \Omega$ as the largest ideal of $M$, so that the subobject $S$ of an object $A$ classified by a morphism $s: A \to \Omega$ is the subset of $A$ on the elements $a$ with $s(a) = M$.

If $F:\Fcal \to \Ecal$ is a functor preserving monomorphisms, then there is a canonical morphism 
\begin{equation} \label{eq:chi}
\chi:F(\Omega_{\Fcal}) \to \Omega_{\Ecal}
\end{equation}
classifying the subobject $F(\top)$ of $F(\Omega_{\Fcal})$. $F$ is said to \textbf{preserve the subobject classifier} if $\chi$ is an isomorphism. The direct image functor of a geometric morphism preserves the subobject classifier if and only if it is \textbf{hyperconnected}, by \cite[Proposition A4.6.6(v)]{Ele}. For the global sections morphism of a Grothendieck topos, this requires the subobject classifier to admit exactly two morphisms from $1$, which can be restated as the property that the topos is \textbf{two-valued}, having exactly two subterminal objects. This is true of $\Setswith{M}$: its terminal object is the one-element $M$-set $1$, whose only proper sub-$M$-set is empty. Thus $\Gamma$ preserves the subobject classifier, and $\Setswith{M}$ is hyperconnected over $\Set$, for any monoid $M$.

Finally, if a functor $F$ preserves monomorphisms and finite products, we say $F$ is \textbf{logical} if it preserves the subobject classifier and exponentials. A geometric morphism whose direct image is logical is automatically an equivalence (\cite[A4.6.7]{Ele}), so $\Gamma$ is logical if and only if $M$ is trivial; this appears as a condition in Theorem \ref{thm:trivial}. We shall see what happens when $\Delta$ is logical in Theorem \ref{thm:atomic}. However, since $\Delta$ already has so many strong preservation properties for an arbitrary monoid $M$, always having a left and right adjoint and preserving exponentials, that theorem is the only one we identify in this paper expressed in terms of properties of $\Delta$.

Several of the properties of toposes we examine are geometric, in the sense that they inherit their names from properties of toposes $\Sh(X)$ of sheaves on a topological space (or more generally a locale), $X$. Accordingly, we supplement most of the definitions in this paper with an illustration of what they mean for toposes of this form. In doing so, it will occasionally benefit us to exploit the equivalence, demonstrated in \cite[Corollary II.6.3]{MLM}, between $\Sh(X)$ and the category $\mathbf{LH}/X$ of \textbf{local homeomorphisms over $X$}, whose objects are \textit{local homeomorphisms} or \textit{\'etale maps} $E \to X$, and whose morphisms are continuous maps making the resulting triangle over $X$ commute. Passing through this equivalence, the components of the global sections morphism $f:\Sh(X) \to \Set$ acquire new interpretations. $f_*(\pi: E \to X)$ is the set of global sections of $\pi$, i.e.\ the set of continuous maps $s : X \to E$ such that $\pi \circ s = \mathrm{id}_X$ (this is where the name of this functor comes from for a general topos). $f^*$ sends a set $A$ to the $A$-fold cover $\pi_1:X \times A \to X$ of $X$. The inverse image functor $f^*$ has a left adjoint $f_!$ if and only if $X$ is a locally connected space, in which case $f_!(\pi:E \to X)$ is the set of connected components of $E$, which is where the name of locally connected geometric morphisms originates. Finally, $X$ is connected if and only if $f^*$ is full and faithful, which is why a geometric morphism with this property is called connected. Since hyperconnectedness of a topos implies connectedness, toposes of monoid actions are connected and locally connected over $\Set$, whence toposes of sheaves over connected, locally connected spaces are a good source of intuition for their properties. It should be stressed, however, that the global sections morphism of $\Sh(X)$ is only hyperconnected if $X$ is the one-point space, so these comparisons can never be realised as equivalences of toposes outside of the case where both the space and monoid are trivial. This is a strength of our property-oriented approach: it allows us to draw formal comparisons between classes of objects even when their corresponding toposes do not coincide.

As a final note, in our examples we will always talk about the topos $\Sh(X)$ of sheaves on a \textit{sober topological space} $X$, which means that the points of $X$ correspond bijectively with the points of $\Sh(X)$, and that this topos has enough points. In instances where the requirement of having enough points is not explicitly mentioned, the results can be extended to encompass toposes of sheaves on suitable locales, should the reader desire it.

\subsection{Tensors and Homs, Flatness and Projectivity}

Being a Grothendieck topos, the category of right $M$-sets has all limits and colimits. Since the functor $U$ from \eqref{eq:can-point-and-global-sections} preserves both limits and colimits, it follows that colimits and limits can be computed on underlying sets. We will use the following notations:
\begin{itemize}
\item $\varnothing$ for the initial object, i.e.\ the empty right $M$-set;
\item $1$ for the terminal object, i.e.\ the right $M$-set with one element;
\item $A \sqcup B$ for the coproduct (disjoint union) of two right $M$-sets $A$ and $B$;
\item $\bigsqcup_{i \in I} X_i$ for the coproduct (disjoint union) of a family of right $M$-sets $\{X_i\}_{i \in I}$;
\item $\varinjlim_{i \in I} X_i$ for the colimit of a filtered diagram $\{X_i\}_{i \in I}$.
\end{itemize}

\subsubsection{Hom-sets and projectivity}

For a monoid $N$ and two right $N$-sets $A$ and $B$, we can consider the hom-set
\begin{equation}
\HOM_N(B,A),
\end{equation}
so that for fixed $B$ we get a functor $\Setswith{N} \to \Set$. Clearly, the global sections functor $\Gamma$ of $\Setswith{N}$ can be expressed as $\HOM_N(1,-)$, for $1$ the trivial right $N$-set, so that properties of $\Gamma$ can be expressed as properties of this $N$-set.

Let $M$ be another monoid and suppose that $B$ is additionally equipped with a \textbf{compatible} left $M$-action, such that
\begin{equation} \label{eq:compatible}
(m \cdot b) \cdot n = m \cdot (b\cdot n)
\end{equation} 
for all $m \in M$, $b \in B$ and $n \in N$ (we will say that $B$ is a left-$M$-right-$N$-set). Then the inverse image action on $\HOM_N(B,A)$ makes it a right $M$-set, so that $\HOM_N(B,-)$ defines a functor $\Setswith{N} \to \Setswith{M}$.

The following definitions work in any topos, but for clarity we formulate it in our special case of a topos $\Setswith{N}$ with $N$ a monoid.

\begin{dfn}
\label{dfn:projectives}
Let $B$ be a right $N$-set, and consider the functor $\HOM_N(B,-): A \mapsto \HOM_N(B,A)$. Then we say that $B$ is: 
\begin{itemize}
\item \textbf{connected} or \textbf{indecomposable} if $\HOM_N(B,-)$ preserves arbitrary (small/set-indexed) coproducts;
\item \textbf{projective} if $\HOM_N(B,-)$ preserves epimorphisms;
\item \textbf{finitely presentable} if $\HOM_N(B,-)$ preserves filtered colimits.
\end{itemize}
\end{dfn}

The definitions for left $N$-sets are analogous.

Every right $M$-set can be written as the disjoint union of its connected components. Further:

\begin{proposition} \label{prop:connected-objects}
Let $X$ be a right $M$-set. Then the following are equivalent:
\begin{enumerate}
\item $X$ is connected/indecomposable;
\item $\HOM_M(X,-)$ preserves binary coproducts;
\item $X$ is non-empty, and $X \cong X_1 \sqcup X_2$ implies that either $X_1$ is empty or $X_2$ is empty;
\item $|C(X)|=1$, with $C$ the connected components functor of (\ref{eq:can-point-and-global-sections}).
\end{enumerate}
\end{proposition}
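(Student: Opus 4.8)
The plan is to prove the equivalence of the four conditions by establishing a cycle of implications, treating condition (4) as the concrete anchor since the connected components functor $C$ provides the most explicit handle on the structure of $X$. I would organise the argument as $(1) \Rightarrow (2) \Rightarrow (3) \Rightarrow (4) \Rightarrow (1)$, so that each step moves from a more abstract categorical condition toward the combinatorial description and back.

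The implication $(1) \Rightarrow (2)$ is immediate, since preserving arbitrary coproducts trivially entails preserving binary ones. For $(2) \Rightarrow (3)$, I would first argue that $X$ must be non-empty: if $X = \varnothing$, then $\HOM_M(X, A) = 1$ for every $A$, and applying this to a binary coproduct $A \sqcup B$ would force $1 \cong 1 \sqcup 1$ in $\Set$, a contradiction. Then, given a decomposition $X \cong X_1 \sqcup X_2$, I would apply the hypothesis that $\HOM_M(X, -)$ preserves binary coproducts to the coproduct $X_1 \sqcup X_2$ itself: the identity on $X$ corresponds to an element of $\HOM_M(X, X_1 \sqcup X_2) \cong \HOM_M(X, X_1) \sqcup \HOM_M(X, X_2)$, forcing the identity to factor through one of the two inclusions, whence one of the $X_i$ is empty.

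For $(3) \Rightarrow (4)$, I would use the fact, stated just before the proposition, that every right $M$-set is the disjoint union of its connected components, indexed precisely by $C(X)$. If $X$ is non-empty then $C(X)$ is non-empty; and if $|C(X)| \geq 2$ we could split off one component to write $X$ as a non-trivial coproduct $X_1 \sqcup X_2$ with both parts non-empty, contradicting (3). Finally, for $(4) \Rightarrow (1)$, I would note that $|C(X)| = 1$ means $X$ is itself a single connected component, i.e.\ the action is transitive in the sense that the equivalence relation generated by $x \sim x \cdot m$ has one class. Given any $M$-set $A = \bigsqcup_{i} A_i$ decomposed into connected components and any morphism $f: X \to A$, the image $f(X)$ must lie in a single component because morphisms respect the generating relation $x \sim x\cdot m$; this yields a natural bijection $\HOM_M(X, \bigsqcup_i A_i) \cong \bigsqcup_i \HOM_M(X, A_i)$, establishing preservation of arbitrary coproducts.

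The step I expect to require the most care is $(4) \Rightarrow (1)$, specifically verifying that an arbitrary morphism out of a connected $X$ lands in a single connected component of the codomain. The key observation is that if $x$ and $x'$ are related by the generating relation (say $x' = x \cdot m$), then $f(x') = f(x) \cdot m$ lies in the same component as $f(x)$; connectedness then propagates this across the whole equivalence class, so the entire image sits in one component. One should be slightly careful that the functor genuinely preserves the coproduct inclusions and not merely the underlying sets, but since coproducts in $\Setswith{M}$ are computed on underlying sets (as $U$ preserves colimits), this is routine. The remaining implications are formal or rest directly on the cited decomposition of $M$-sets into connected components.
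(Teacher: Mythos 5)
Your proof is correct, but it takes a genuinely different route from the paper's. The paper does not carry out any of these implications by hand: it remarks that the result "can be shown as an exercise" and then proves it entirely by citation, invoking a theorem of Janelidze and Tholen on infinitary extensive categories for the equivalences $(1 \Leftrightarrow 2 \Leftrightarrow 3)$ (plus the fact that Grothendieck toposes are infinitary extensive) and Johnstone's Lemma C3.3.6 on locally connected toposes for $(3 \Leftrightarrow 4)$. What you have written is essentially the exercise the paper alludes to, done in full: a self-contained cycle $(1)\Rightarrow(2)\Rightarrow(3)\Rightarrow(4)\Rightarrow(1)$ using only the concrete description of coproducts in $\Setswith{M}$ (computed on underlying sets) and the decomposition of an $M$-set into connected components. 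Each step is sound: the emptiness argument via $\HOM_M(\varnothing,-)\cong 1$, the factorization of $\id_X$ through one cofactor (a split epic coproduct inclusion forces the other summand empty), and the key observation in $(4)\Rightarrow(1)$ that the index of the cofactor containing $f(x)$ is invariant under the generating relation $x \sim x\cdot m$ because each cofactor is closed under the $M$-action -- note this last argument applies verbatim to an \emph{arbitrary} coproduct decomposition of the codomain, not only the decomposition into connected components, which is exactly what is needed for preservation of all small coproducts. The trade-off is the usual one: your approach buys elementary self-containedness, readable without any background in extensive categories or locally connected toposes, while the paper's approach buys brevity and makes clear that nothing here is special to $M$-sets, the statement being an instance of facts valid in any infinitary extensive category, respectively any locally connected Grothendieck topos.
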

\begin{proof}
This can be shown as an exercise, but it also follows from more general results in topos theory, as follows. The equivalences ($1 \Leftrightarrow 2 \Leftrightarrow 3$) hold in any infinitary extensive category by \cite[Theorem 2.1]{janelidze}. Grothendieck toposes are infinitary extensive, see e.g.\ \cite[\S4.3]{carboni-vitale}. The equivalence $(3 \Leftrightarrow 4)$ holds for locally connected Grothendieck toposes, and is discussed around \cite[Lemma C3.3.6]{Ele}.
\end{proof}

In particular, the empty set is not indecomposable. Note that in the semigroup literature, $M$-sets are sometimes assumed to be non-empty by definition, see e.g.\ \cite{MAC}. We will not follow this convention, because it prevents the category of right $M$-sets from being a topos.

A right $M$-set will be called \textbf{free} if each connected component is isomorphic to $M$ (with right $M$-action given by multiplication). Free and projective right $M$-sets are related in the following way:

\begin{proposition}
For $M$ a monoid and $P$ a right $M$-set, the following are equivalent:
\begin{enumerate}
\item $P$ is projective;
\item $P$ is a retract of a free right $M$-set;
\item $P \cong \bigsqcup_{i \in I} e_i M$ for some family $\{e_i\}_{i \in I}$ of idempotents in $M$. 
\end{enumerate}
\end{proposition}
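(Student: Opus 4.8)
The plan is to prove the cycle $(1)\Rightarrow(2)\Rightarrow(3)\Rightarrow(1)$, separating the two implications carrying genuine content from the routine bookkeeping. The backbone observation is that $M$, regarded as a right $M$-set via right multiplication, is the free right $M$-set on one generator, with $\HOM_M(M,A)\cong U(A)$ naturally in $A$. Since $U$ is, up to this isomorphism, the functor $\HOM_M(M,-)$, and since the epimorphisms in $\Setswith{M}$ are exactly the maps that are surjective on underlying sets, $M$ is projective. Using $\HOM_M\big(\bigsqcup_{j}M,-\big)\cong\prod_j\HOM_M(M,-)$ together with the fact that a product of surjections is a surjection, every free right $M$-set is then projective.

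For $(1)\Rightarrow(2)$ I would use the left adjoint $-\times M$ to $U$ appearing in \eqref{eq:can-point-and-global-sections}: the counit at $P$ is the map $U(P)\times M\to P$, $(p,m)\mapsto p\cdot m$, which is equivariant and surjective, hence an epimorphism out of the free right $M$-set $U(P)\times M$. If $P$ is projective, applying $\HOM_M(P,-)$ to this epimorphism and lifting $\id_P$ produces a section, exhibiting $P$ as a retract of a free right $M$-set. For $(3)\Rightarrow(1)$ I would first note that for an idempotent $e\in M$ the right $M$-set $eM$ is a retract of $M$: the inclusion $eM\hookrightarrow M$ is split by the map $m\mapsto em$, which is equivariant because left and right multiplication commute and is a retraction because $e^2=e$. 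Hence $\bigsqcup_{i}e_iM$ is a retract of the free right $M$-set $\bigsqcup_i M$, and since projectivity is inherited by retracts (a short lifting diagram chase), $P$ is projective; the same observation gives $(3)\Rightarrow(2)$ directly.

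The main obstacle is $(2)\Rightarrow(3)$, the structural description of retracts of free right $M$-sets. Here I would encode a retract $i\colon P\to F$, $r\colon F\to P$ of $F=\bigsqcup_{j\in J}M$ by the idempotent endomorphism $\epsilon=i\circ r$ of $F$, and use that in the complete category $\Setswith{M}$ the retract is recovered as the fixed set $\{x\in F\mid \epsilon(x)=x\}$. Writing $\epsilon$ on the generators as $\epsilon(j,1)=(\sigma(j),a_j)$ records a function $\sigma\colon J\to J$ and elements $a_j\in M$, and the relation $\epsilon^2=\epsilon$ forces $\sigma^2=\sigma$ and $a_{\sigma(j)}a_j=a_j$ for all $j$. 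On the fixed set only the indices in $J_0=\{j\mid\sigma(j)=j\}$ contribute, and for such $j$ the relation becomes $a_j^2=a_j$, so $e_j:=a_j$ is idempotent; moreover the fibre over $j\in J_0$ is $\{m\in M\mid e_jm=m\}=e_jM$. Assembling the fixed set as a coproduct over $J_0$ then yields $P\cong\bigsqcup_{j\in J_0}e_jM$.

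The two points needing care are the identification of the retract with the fixed set of its associated idempotent, and the elementary computation $\{m\mid em=m\}=eM$ for idempotent $e$; everything else in the argument is formal. I expect the bulk of the work to lie in setting up the generator-level description of $\epsilon$ cleanly enough that the idempotence conditions fall out transparently.
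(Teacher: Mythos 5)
Your proposal is correct, but note that the paper does not actually prove this proposition: its entire ``proof'' is a citation to Kilp \emph{et al.} \cite[III, 17]{MAC}, so there is no in-paper argument to compare against. What you have written is essentially a self-contained reconstruction of the classical act-theoretic proof found in that reference. All the key steps check out: the counit $U(P)\times M\to P$, $(p,m)\mapsto p\cdot m$ is indeed an equivariant surjection from a free $M$-set (its components are the $\{p\}\times M\cong M$), so lifting $\id_P$ gives $(1)\Rightarrow(2)$; the identification of a retract with the fixed set of the idempotent $\epsilon=i\circ r$ is valid because $i$ is a split mono with image exactly $\{x\mid\epsilon(x)=x\}$, and this fixed set is an $M$-subset by equivariance of $\epsilon$; the generator-level bookkeeping $\epsilon(j,1)=(\sigma(j),a_j)$ correctly yields $\sigma^2=\sigma$ and $a_{\sigma(j)}a_j=a_j$, so that on $J_0=\{j\mid\sigma(j)=j\}$ each $a_j$ is idempotent and the fibre is $\{m\mid a_jm=m\}=a_jM$; and $(3)\Rightarrow(1)$ follows since $eM$ is an equivariant retract of $M$ (the splitting $m\mapsto em$ is right-equivariant precisely because it is left multiplication) and projectivity passes to retracts and coproducts. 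What your version buys over the paper's citation is self-containedness and transparency about which ambient facts are used (epimorphisms in $\Setswith{M}$ are surjections, $\HOM_M(M,-)\cong U$, products of surjections are surjections -- the last implicitly invoking choice, which is harmless here); the cost is length, which is presumably why the authors deferred to the literature.
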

\begin{proof}
See e.g.\ \cite[III, 17]{MAC}.
\end{proof}

We should note that the topos-theoretic definition of finite presentability agrees with the one from universal algebra:

\begin{proposition}[{Cf. \cite[Corollary 3.13]{adamek-rosicky}}] \label{prop:finitely-presentable-topos-vs-algebra}
A right $M$-set $X$ is finitely presentable if and only if we can write $X$ as the colimit of a diagram
\begin{equation*}
\begin{tikzcd}
F \ar[r,shift left=1,"{a}"] \ar[r,shift right=1,"{b}"'] & F'
\end{tikzcd}
\end{equation*}
for some finitely generated free right $M$-sets $F$ and $F'$ and morphisms $a,b$.
\end{proposition}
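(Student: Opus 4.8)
The plan is to prove the equivalence in two directions, using the characterisation of finite presentability as preservation of filtered colimits (Definition \ref{dfn:projectives}) together with the fact that every right $M$-set is a filtered colimit of finitely presentable ones. I would first fix notation: write $X$ as a coequaliser $F \rightrightarrows F' \to X$ with $F = \bigsqcup_{i=1}^{k} M$ and $F' = \bigsqcup_{j=1}^{\ell} M$ finitely generated free right $M$-sets. The key computational observation is that, by the universal property of coproducts and the Yoneda-style isomorphism $\HOM_M(M, A) \cong U(A)$ (where $U$ is the forgetful functor), the functor $\HOM_M(F, -)$ is naturally isomorphic to $U(-)^k$, the $k$-fold product of underlying sets; and $U$ preserves filtered colimits since $U$ preserves all colimits (as noted in the excerpt, colimits in $\Setswith{M}$ are computed on underlying sets).

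For the direction that a coequaliser of finitely generated free right $M$-sets is finitely presentable, I would argue that finite products of sets commute with filtered colimits in $\Set$, so each $\HOM_M(F, -)$ and $\HOM_M(F', -)$ preserves filtered colimits. Then, given a filtered diagram $\{A_t\}$ with colimit $A = \varinjlim A_t$, I would compute $\HOM_M(X, A)$ as an equaliser
\begin{equation*}
\HOM_M(X, A) \longrightarrow \HOM_M(F', A) \rightrightarrows \HOM_M(F, A),
\end{equation*}
coming from the coequaliser presentation of $X$. Since finite limits (here an equaliser) commute with filtered colimits in $\Set$, and both $\HOM_M(F', -)$ and $\HOM_M(F, -)$ preserve the filtered colimit, the equaliser describing $\HOM_M(X, A)$ is the filtered colimit of the equalisers describing $\HOM_M(X, A_t)$; this yields $\HOM_M(X, A) \cong \varinjlim \HOM_M(X, A_t)$, which is exactly finite presentability of $X$.

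For the converse, suppose $X$ is finitely presentable. The standard strategy is to exhibit $X$ as a filtered colimit of objects of the asserted form and then use finite presentability to split off $X$ as a retract of a stage, upgrading that retract to a genuine presentation. Concretely, every right $M$-set is the filtered colimit of its finitely generated sub-$M$-sets, and every finitely generated right $M$-set is itself a coequaliser of finitely generated free right $M$-sets; so the objects described in the proposition are cofinal (or at least final enough) among the finitely presentable approximations to $X$. Writing $X = \varinjlim_t X_t$ over such a filtered diagram of coequalisers of finitely generated frees, finite presentability gives that $\id_X \in \HOM_M(X, X) \cong \varinjlim_t \HOM_M(X, X_t)$ factors through some $X_t$, exhibiting $X$ as a retract of $X_t$. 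The final step is to show that a retract of a coequaliser of finitely generated free right $M$-sets is again (isomorphic to) such a coequaliser; this follows because the relevant category of presentations is closed under retracts, or alternatively one observes directly that a finitely generated right $M$-set together with a finitely generated congruence admits such a presentation.

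The main obstacle I anticipate is the converse direction, specifically the bookkeeping needed to guarantee that the presentation can be taken with \emph{finitely generated} $F$ and $F'$ simultaneously — that is, that both the generators and the relations are finite. Producing a finitely generated free cover $F' \to X$ is immediate once $X$ is finitely generated (which follows from finite presentability), but establishing that the kernel congruence is finitely generated, so that $F$ may be chosen finitely generated, requires invoking the filtered-colimit preservation property a second time to control the relations. This is precisely the content of the cited \cite[Corollary 3.13]{adamek-rosicky}, and the cleanest route is to appeal to the general theory of locally finitely presentable categories there rather than reproving the relation-finiteness by hand; the rest of the argument is the routine translation of that general statement into the concrete language of right $M$-sets.
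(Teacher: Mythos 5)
The first thing to note is that the paper itself contains no proof of this proposition: it is justified entirely by the citation to Ad\'amek--Rosick\'y, so any complete argument goes beyond the paper's text. Your ``if'' direction is correct and is the standard one: $\HOM_M(F,-) \cong U(-)^k$ preserves filtered colimits, the contravariant hom turns the coequaliser presentation into an equaliser, and finite limits commute with filtered colimits in $\Set$.

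The converse direction, however, contains a genuine error: the claim that ``every finitely generated right $M$-set is itself a coequaliser of finitely generated free right $M$-sets'' is false. That claim asserts precisely that finitely generated implies finitely presented, which is the very distinction the proposition mediates; combined with your (correct) first direction it would make every finitely generated $M$-set finitely presentable. The paper itself supplies counterexamples: the terminal right $M$-set $1$ is cyclic, hence finitely generated, but by Theorem \ref{thm:strongly-compact} it is finitely presentable only when $M$ is right-factorably finitely generated, and this fails e.g.\ for $M = \Nbb^{\times}$ or for any group that is not finitely generated (Proposition \ref{prop:strongly-compact-groups}). Consequently the filtered diagram you run the retract argument over --- the finitely generated sub-$M$-sets of $X$ --- does not consist of objects of the asserted form; worse, since its colimit injections are inclusions, a section $X \to X_t$ forces $X_t = X$, so this diagram can only ever prove that $X$ is finitely generated. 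The standard repair, which is the actual content of the cited corollary, is a two-stage argument: first conclude from the subobject diagram that $X$ is finitely generated and choose a cover $q : F' \too X$ with $F'$ finitely generated free; then write the kernel congruence $\theta$ of $q$ as the filtered union of its finitely generated sub-congruences $\theta_i$, so that $X \cong \varinjlim_i F'/\theta_i$ with each $F'/\theta_i$ now genuinely of the asserted form, and apply finite presentability of $X$ a second time, upgrading the resulting retraction to an isomorphism $X \cong F'/\theta_j$ using that each $F'/\theta_i$ is finitely presentable (by the first direction) and the transition maps are epic. Your retract-closure observation is true, and your closing paragraph does correctly isolate finite generation of the relations as the crux; but the argument as written cannot reach that point, because the intermediate claim it rests on is exactly the statement being proved.
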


\subsubsection{Tensors and flatness}

Consider a right $M$-set $A$ and a left $M$-set $B$. Recall that the \textbf{tensor product} of $A$ and $B$ over $M$ is defined as the set:
\begin{equation*}
A \otimes_M B ~=~ (A \times B)/\!\sim\,
\end{equation*}
where $\sim$ is the equivalence relation generated by
\begin{equation*}
(am,b) \sim (a,mb)
\end{equation*}
for all $a \in A$, $b \in B$ and $m \in M$. The equivalence class of a pair $(a,b)$ is denoted by $a \otimes b$. An alternative expression for the connected components functor is $C(X) = X \otimes_M 1$, where $1$ is the trivial left $M$-set. Thus properties of $C$ can be expressed as properties of this trivial left $M$-set; later in this section we shall establish the terminology we will use for these properties. More generally, suppose that there is a monoid $N$ and a compatible right $N$-action on $B$, as in \eqref{eq:compatible} above. Then $A \otimes B$ inherits a right $N$-action from $B$, defined by $(a \otimes b) \cdot n = a \otimes (b \cdot n)$.

As in ring theory, the functor $-\otimes_M B$ is left adjoint to $\HOM_N(B,-)$, for $B$ a set with compatible left $M$-action and right $N$-action.
\begin{proposition}
\label{prop:adjunction}
Let $M$ and $N$ be monoids. Any left-$M$-right-$N$-set $B$ induces an adjunction:
\[\begin{tikzcd}[column sep=large]
\Setswith{N} \ar[r, bend right = 15, "{\HOM_N(B,-)}"'] \ar[r, phantom, "\bot"] & \Setswith{M} \ar[l, bend right = 15, "- \otimes_M B"']
\end{tikzcd}\]
Conversely, any adjunction between these categories (in this direction) has this form for some left-$M$-right-$N$-set $B$.
\end{proposition}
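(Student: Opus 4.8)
The plan is to treat the first assertion as the familiar tensor–hom adjunction and the second as an Eilenberg–Watts-type recognition theorem. For the forward direction, given a left-$M$-right-$N$-set $B$, I would exhibit the natural bijection
\[
\Hom_{\Setswith{N}}(A \otimes_M B, Y) \;\cong\; \Hom_{\Setswith{M}}(A, \HOM_N(B,Y))
\]
directly: a right $N$-set map $f$ on the left corresponds to the assignment $a \mapsto (b \mapsto f(a\otimes b))$ on the right. The verifications are routine but must be carried out in sequence: that this assignment lands in $\HOM_N(B,Y)$ (i.e.\ is right $N$-equivariant in $b$), that it is itself a morphism of right $M$-sets (using the inverse-image $M$-action on $\HOM_N(B,Y)$ together with the left $M$-action on $B$ encoded in the tensor relation $am\otimes b = a \otimes mb$), that the candidate inverse is well defined on the equivalence classes $a\otimes b$, and that the correspondence is natural in both $A$ and $Y$. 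None of these steps presents a genuine obstacle; they are precisely the monoid-action analogue of the classical hom-tensor adjunction from ring theory.

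For the converse, suppose $L \dashv R$ with $L : \Setswith{M} \to \Setswith{N}$ the left adjoint and $R:\Setswith{N}\to\Setswith{M}$ the right adjoint. The first step is to produce the bimodule: set $B := L(M)$, where $M$ now denotes the representable right $M$-set. This is a right $N$-set by construction, and I would equip it with a left $M$-action via the isomorphism $\End_{\Setswith{M}}(M) \cong M$ (every endomorphism of $M$ as a right $M$-set is left multiplication $f_a\colon x\mapsto ax$ by a unique $a\in M$, and $f_a\circ f_b = f_{ab}$). Applying $L$ yields a monoid homomorphism $M \cong \End_{\Setswith{M}}(M) \to \End_{\Setswith{N}}(B)$, and setting $m\cdot b := L(f_m)(b)$ defines a left $M$-action; because each $L(f_m)$ is a morphism of right $N$-sets, this action commutes with the right $N$-action, so $B$ is a left-$M$-right-$N$-set.

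The heart of the argument is then to show $L \cong {-}\otimes_M B$. For each $a \in A$ let $\hat a : M \to A$ be the right $M$-set map with $\hat a(1) = a$, and define a comparison morphism $\alpha_A : A\otimes_M B \to L(A)$ by $a \otimes b \mapsto L(\hat a)(b)$. One checks this is well defined on tensors by observing $\widehat{a\cdot m} = \hat a \circ f_m$, so that $\alpha_A((a\cdot m)\otimes b) = L(\hat a)(L(f_m)(b)) = \alpha_A(a\otimes(m\cdot b))$, and that $\alpha$ is natural in $A$. To see $\alpha_A$ is an isomorphism, I would invoke the co-Yoneda/density presentation: every right $M$-set $A$ is canonically the colimit, over its category of elements, of copies of the representable $M$ with connecting maps given by right multiplications. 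Both ${-}\otimes_M B$ and $L$ preserve all colimits (the former by the forward direction, the latter by hypothesis as a left adjoint), and both send $M$ to $B$ compatibly with the $M$-action just identified; hence they agree on this colimit presentation and $\alpha_A$ is an isomorphism for every $A$. Finally, $R \cong \HOM_N(B,-)$ follows formally from uniqueness of right adjoints up to natural isomorphism.

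The main obstacle I anticipate lies entirely in the converse, at the step of verifying that $\alpha$ is an isomorphism: this requires making precise the sense in which an arbitrary right $M$-set is a colimit of copies of $M$, and then checking that $L$ and ${-}\otimes_M B$ compute that colimit in the same way, so that agreement on the single generator $M$ and on the morphisms between coproducts of copies of $M$ propagates to agreement on all right $M$-sets. Everything else — the equivariance bookkeeping in both directions and the well-definedness on tensor classes — is mechanical.
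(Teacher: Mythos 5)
Your proposal is correct and matches the paper's proof in both halves: the same explicit bijection $h \mapsto (a \mapsto (b \mapsto h(a\otimes b)))$ for the tensor--hom adjunction, and the same construction $B := L(M)$ with left $M$-action induced by the identification $\End_{\Setswith{M}}(M)\cong M$ for the converse. The only difference is one of detail: the paper states the converse in a single sentence, leaving implicit the density/co-Yoneda argument (both functors are cocontinuous and agree on the representable $M$) that you spell out to verify $L\cong{-}\otimes_M B$.
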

\begin{proof}
For $X$ a right $M$-set and $Y$ a right $N$-set, the isomorphism $\HOM_M(Y \otimes_M B, X) \cong \HOM_N(Y, \HOM_N(B,X))$ sends $h$ in the former to the mapping $y \mapsto (b \mapsto h(y \otimes b))$. It is easy to check that this is well-defined, and has inverse mapping $k$ to $y \otimes b \mapsto k(y)(b)$.

Conversely, given $F:\Setswith{M} \to \Setswith{N}$ having a left adjoint, $B$ is given by $F(M)$, with left $M$-action induced by the left $M$-action on $M$.
\end{proof}

\begin{dfn}
\label{dfn:flats}
Let $B$ be a non-empty left $M$-set, and let $F : \Setswith{M} \to \Set$ be the functor $A \mapsto A \otimes_M B$. By Proposition \ref{prop:connected-objects}, $F$ preserves the terminal object if and only if $B$ is indecomposable. $F$ preserves arbitrary limits if and only if it is indecomposable projective (this follows from e.g.\ \cite[Section 4]{TDMA}). More generally, we say that $B$ is:
\begin{itemize}
\item \textbf{monomorphism-flat} if $F$ preserves monomorphisms.
\item \textbf{finitely product-flat} if $F$ preserves finite products.
\item \textbf{product-flat} if $F$ preserves products.
\item \textbf{equalizer-flat} if $F$ preserves equalizers.
\item \textbf{pullback-flat} if $F$ preserves pullbacks.
\item \textbf{flat} if $F$ preserves finite limits.
\end{itemize}
\end{dfn}

The definitions for right $M$-sets are analogous.

It is \textit{very important} to note that this naming system differs from the naming conventions in semigroup theory literature, notably that of Bulman-Fleming and Laan in \cite{flatness}. Our terminology is the same when it comes to `finitely product-flat', `equalizer-flat' and `pullback-flat'. However, what we call `monomorphism-flat' is called `flat' in their paper. Our justification for this departure is that our naming system aligns more closely with that in the category theory literature. More precisely:
\begin{dfn}[{\cite[VII.6, Definition 2]{MLM}}]
\label{dfn:filtering}
Recall that a functor $F: \Ccal \to \Set$ is called \textbf{flat} (or \textit{filtering}) if:
\begin{enumerate}
\item $F(C)\neq \varnothing$ for some object $C$ of $\Ccal$;
\item for elements $a \in F(A)$ and $b \in F(B)$ there is an object $C$, morphisms $f : C \to A$ and $g : C \to B$, and an element $c \in F(C)$ such that $F(f)(c) = a$ and $F(f)(c) = b$;
\item for morphisms $f,g : B \to A$ in $\Ccal$ and $b \in F(B)$ such that $F(f)(b) = F(g)(b)$, there is a morphism $h : C \to B$ and an element $c \in F(C)$ such that $fh = gh$ and $F(h)(c) = b$. 
\end{enumerate}
\end{dfn}

We then have the following correspondence between our definition of flatness for $M$-sets and the notion of flatness for functors.

\begin{proposition} \label{prop:flat-functor-categorically-flat}
Consider a monoid $M$ and a left $M$-set $B$. The following are equivalent.
\begin{enumerate}
\item $B$ is flat, i.e.\ $- \otimes_M B : \Setswith{M} \to \Set$ preserves finite limits.
\item $B$ is flat as a functor $M \to \Set$.
\item $- \otimes_M B$ is the inverse image part of a point: a geometric morphism $\Set \to \Setswith{M}$.
\end{enumerate}
When $B$ is equipped with a right $N$-action, we can replace $\Set$ with $\Setswith{N}$ in this statement.
\end{proposition}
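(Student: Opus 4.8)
The plan is to obtain the cycle of implications by treating $(1) \Leftrightarrow (3)$ as a formal consequence of the tensor--hom adjunction already established, and then identifying $(1) \Leftrightarrow (2)$ with the classical equivalence between left-exactness of a tensor functor and filtering of the functor that defines it. First I would dispatch $(1) \Leftrightarrow (3)$. Specialising Proposition~\ref{prop:adjunction} to the trivial monoid $N = 1$ (so that $\Setswith{1} \cong \Set$), the functor $-\otimes_M B$ acquires a right adjoint $\Set \to \Setswith{M}$, namely $\HOM_1(B,-)$ in the notation there. By definition a point of $\Setswith{M}$ is a geometric morphism $\Set \to \Setswith{M}$, i.e.\ an adjunction whose inverse image $\Setswith{M} \to \Set$ preserves finite limits. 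Thus $(3) \Rightarrow (1)$ is immediate, and for $(1) \Rightarrow (3)$ the only datum required beyond hypothesis $(1)$ is the right adjoint, which Proposition~\ref{prop:adjunction} already supplies; hence $(1) \Leftrightarrow (3)$.

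For $(1) \Leftrightarrow (2)$ I would first record that $-\otimes_M B$ is the colimit-preserving extension of $B$ along the Yoneda embedding $\yon\colon M \to \Setswith{M}$, which sends the unique object to the free right $M$-set $M$: indeed $A \otimes_M M \cong A$ naturally, and $-\otimes_M B$ preserves colimits (being a left adjoint), so it agrees with the left Kan extension $\mathrm{Lan}_{\yon}B$ on the representable and hence everywhere. The direction $(2) \Rightarrow (1)$ then has a clean conceptual proof: a flat (filtering) functor $B$ is a filtered colimit of copies of the representable left $M$-set $M$, so that $-\otimes_M B \cong \varinjlim_i (-\otimes_M M) \cong \varinjlim_i \mathrm{id}$; since each term preserves finite limits and filtered colimits commute with finite limits in $\Set$, the colimit $-\otimes_M B$ preserves finite limits. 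The converse $(1) \Rightarrow (2)$, reconstructing the three clauses of Definition~\ref{dfn:filtering} from left-exactness, is exactly the remaining content of the classical correspondence \cite[VII.6]{MLM}, which in the one-object case reduces to an essentially routine translation; alternatively it drops out of Diaconescu's theorem (see \cite[Chapter~VII]{MLM}) combined with $(1) \Leftrightarrow (3)$.

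The step I expect to be the main obstacle is precisely the direction $(1) \Rightarrow (2)$: extracting the filtering data---non-emptiness, the amalgamation of pairs of elements into a common span, and the coequalizing of parallel pairs---from the abstract hypothesis that $-\otimes_M B$ preserves finite limits. Concretely, one must argue that left-exactness forces the category of elements $\int_M B$ to be cofiltered, and the delicate clause is the third one in Definition~\ref{dfn:filtering}, governing equalizers, where care is needed with the equivalence relation defining $\otimes_M$; this is the point at which I would lean on the reference rather than reprove the combinatorics by hand.

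Finally, for the parenthetical extension I would note that a compatible right $N$-action makes $-\otimes_M B$ take values in $\Setswith{N}$, and that Proposition~\ref{prop:adjunction} supplies its right adjoint in this generality. Since the forgetful functor $\Setswith{N} \to \Set$ creates limits, the $\Setswith{N}$-valued functor preserves finite limits if and only if $-\otimes_M B$ does as a $\Set$-valued functor; the right $N$-action therefore affects none of $(1)$, $(2)$, $(3)$, and the argument above applies verbatim with $\Set$ replaced by $\Setswith{N}$ and points replaced by geometric morphisms $\Setswith{N} \to \Setswith{M}$.
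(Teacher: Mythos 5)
Your proposal is correct, but it decomposes the statement differently from the paper. The paper's proof is purely citation-based: it invokes \cite[VII, Theorem 3]{MLM} for $(1 \Leftrightarrow 2)$ and Diaconescu's theorem (\cite[Theorem VII.7.2]{MLM}, \cite[Theorem B3.2.7]{Ele}) for $(2 \Leftrightarrow 3)$; in particular it never argues $(1 \Leftrightarrow 3)$ directly, obtaining it only by composing the two cited equivalences. You instead prove $(1 \Leftrightarrow 3)$ outright as a formal consequence of Proposition~\ref{prop:adjunction} with $N = 1$ (the tensor functor always has a right adjoint, so left-exactness is literally the only condition separating it from being an inverse image functor), and you prove $(2 \Rightarrow 1)$ by writing a filtering $B$ as a filtered colimit of copies of the representable left $M$-set and using commutation of filtered colimits with finite limits in $\Set$. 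What your route buys is self-containedness: the only step you still delegate to the literature is $(1 \Rightarrow 2)$, i.e.\ extracting the filtering conditions from left-exactness, and you correctly identify that this can alternatively be recovered from Diaconescu's theorem together with your $(1 \Leftrightarrow 3)$ --- which is in effect the same division of labour the paper adopts, just with more of the easy directions made explicit. What the paper's route buys is brevity, with all combinatorial content (including the delicate direction you flag) absorbed into the references. Two small remarks: your claim that a filtering functor is a filtered colimit of representables is exactly the statement that its category of elements is cofiltered plus the canonical colimit decomposition, so it sits in the same circle of ideas as the cited theorem rather than replacing it; and in the chain $-\otimes_M B \cong \varinjlim_i(-\otimes_M M)$ each term is naturally isomorphic to the \emph{underlying-set} functor $U$ rather than the identity (the codomain is $\Set$), which is harmless since $U$ preserves all limits.
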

\begin{proof}
($1 \Leftrightarrow 2$) See e.g.\ \cite[VII, Theorem 3]{MLM}. \\
($2 \Leftrightarrow 3$) This appears as \cite[Theorem VII.7.2]{MLM}, an extension of which is also referred to as Diaconescu's theorem by Johnstone in \cite[Theorem B3.2.7]{Ele}.
\end{proof}

Flat left $M$-sets don't just give us the points of $\Setswith{M}$; by considering the value of these functors at $M$, we see that the natural transformations
\begin{equation*}
\begin{tikzcd}
-\otimes_M B \ar[r] & -\otimes_M B'
\end{tikzcd}
\end{equation*}
correspond bijectively to the morphisms of left $M$-sets $B \to B'$. Moreover, the \textbf{essential points}, for which the inverse image functor has a left adjoint, correspond by \cite[Section 4]{TDMA} to the indecomposable projective left $M$-sets, which are indexed by the idempotents of $M$. In summary:
\begin{crly}
The category of points of $\Setswith{M}$ is equivalent to the category of flat left $M$-sets and left $M$-set homomorphisms; the full subcategory of essential points is equivalent to the full subcategory on the indecomposable projective left $M$-sets.
\end{crly}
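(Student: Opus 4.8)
The plan is to exhibit an explicit equivalence $\Phi$ from the category of points of $\Setswith{M}$ to the category of flat left $M$-sets, and then to check that it restricts to the claimed equivalence on the essential parts. Recall that the category of points has as objects the geometric morphisms $p \colon \Set \to \Setswith{M}$---equivalently, the finite-limit-preserving functors $p^* \colon \Setswith{M} \to \Set$ admitting a right adjoint $p_*$---and as morphisms $p \to q$ the geometric transformations, namely the natural transformations $p^* \Rightarrow q^*$. I define $\Phi$ on objects by $\Phi(p) = p^*(M)$, where $M$ is regarded as a right $M$-set and the left $M$-action on $\Phi(p)$ is induced by applying $p^*$ to the left-multiplication maps $\ell_m \colon M \to M$; on a geometric transformation $\alpha \colon p^* \Rightarrow q^*$ I set $\Phi(\alpha) = \alpha_M \colon p^*(M) \to q^*(M)$. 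Naturality of $\alpha$ against the maps $\ell_m$ forces $\alpha_M$ to be $M$-equivariant, and functoriality of $\Phi$ is then immediate.

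First I would check that $\Phi$ lands in flat left $M$-sets and is essentially surjective. Since $p^*$ has a right adjoint, Proposition~\ref{prop:adjunction} (with $N$ the trivial monoid, so that $\Setswith{N} = \Set$) identifies $p^*$ up to natural isomorphism with $-\otimes_M B$ for $B = p^*(M)$. As $p^*$ preserves finite limits, so does $-\otimes_M B$, whence $B$ is flat by Definition~\ref{dfn:flats} and Proposition~\ref{prop:flat-functor-categorically-flat}. Essential surjectivity is then exactly Diaconescu's theorem in the form of Proposition~\ref{prop:flat-functor-categorically-flat}: every flat left $M$-set $B$ yields a point whose inverse image is $-\otimes_M B$, and this point is sent by $\Phi$ to $M \otimes_M B \cong B$.

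For full faithfulness I would invoke the bijection recorded just before the statement: natural transformations $-\otimes_M B \Rightarrow -\otimes_M B'$ correspond bijectively, by evaluation at $M$ together with the identification $M \otimes_M B \cong B$, to homomorphisms of left $M$-sets $B \to B'$. Transporting this bijection along the natural isomorphisms $p^* \cong -\otimes_M \Phi(p)$ shows that $\Phi$ induces a bijection on hom-sets, so $\Phi$ is fully faithful and hence an equivalence onto the category of flat left $M$-sets.

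Finally I would treat the essential points. A point $p$ is essential precisely when $p^*$ admits a left adjoint, equivalently when $-\otimes_M \Phi(p)$ preserves all (not merely finite) limits; by Definition~\ref{dfn:flats} together with \cite[Section~4]{TDMA} this holds exactly when $\Phi(p)$ is indecomposable projective. Since indecomposable projective left $M$-sets are in particular flat, being retracts of free $M$-sets, and the essential points form a full subcategory of the points, the fully faithful $\Phi$ restricts to an equivalence between the essential points and the full subcategory on the indecomposable projective left $M$-sets, as required. The only genuinely delicate points are bookkeeping ones: ensuring that the convention for the direction of geometric transformations matches the covariance of the bijection cited above, and confirming the existence of the left adjoint of $-\otimes_M B$ exactly for indecomposable projective $B$, for which I lean on \cite{TDMA} rather than reproving the adjoint-functor argument here.
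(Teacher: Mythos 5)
Your proof is correct and takes essentially the same approach as the paper, which obtains this corollary directly from Diaconescu's theorem (Proposition \ref{prop:flat-functor-categorically-flat}), the evaluation-at-$M$ bijection between natural transformations $-\otimes_M B \Rightarrow -\otimes_M B'$ and left $M$-set maps $B \to B'$, and the citation of \cite[Section 4]{TDMA} for essential points. You simply make the equivalence functor $p \mapsto p^*(M)$ explicit and spell out the verifications (equivariance of $\alpha_M$, essential surjectivity via $M \otimes_M B \cong B$, restriction to the essential subcategory) that the paper leaves implicit in the surrounding discussion.
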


It follows from the definitions that, for example, flat left $M$-sets are pullback-flat. However, other interactions between the different notions of flatness are not so clear. We present some general facts which simplify the situation. These are (by now) well-known in category theory literature thanks to  authors such as Freyd and Scedrov in \cite{freyd-scedrov}, but were reached independently by semigroup theorists such as Bulman-Fleming in \cite{bulman-fleming}. We reproduce proofs here anyway.
\begin{proposition} \label{prop:flatness-properties}
Let $F: \Setswith{M} \to \Setswith{N}$ be a functor. Then:
\begin{enumerate}
\item Suppose that $F$ is nontrivial (i.e.\ $F(A) \neq \varnothing$ for at least one $A$).  If $F$ preserves binary products, then it also preserves the terminal object; thus in order for a left $M$-set $B$ to be finitely product-flat, it suffices that $B \neq \varnothing$ and $- \otimes_M B$ preserves binary products.
\item If $F$ preserves pullbacks, then it also preserves equalizers, so a pullback-flat object is equalizer-flat.
\item If $F$ preserves pullbacks and the terminal object, then it preserves all finite limits, so an object is flat if and only if it is indecomposable and pullback-flat.
\end{enumerate}
\end{proposition}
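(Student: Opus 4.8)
The plan is to handle the three parts in order, reducing each to an analysis of how $F$ transforms a single limit built out of products and pullbacks.

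For (1), I would begin from the comparison $\langle F(p_1), F(p_2)\rangle \colon F(1\times 1) \to F(1)\times F(1)$, which is an isomorphism by the product-preservation hypothesis, where $p_1,p_2 \colon 1\times 1 \to 1$ are the projections. Both projections coincide with the unique (invertible) map $1\times 1 \to 1$, so this comparison factors as $\delta_{F(1)}\circ F(p_1)$ with $F(p_1)$ invertible, forcing the diagonal $\delta_{F(1)}\colon F(1)\to F(1)\times F(1)$ to be an isomorphism; equivalently $F(1)$ is subterminal. Nontriviality of $F$ provides an object $A$ with $F(A)\neq\varnothing$, and applying $F$ to $A\to 1$ shows $F(1)\neq\varnothing$. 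Since $\Setswith{N}$ is two-valued, its only subterminal objects are $\varnothing$ and $1$, whence $F(1)\cong 1$. The flatness consequence is then immediate: $-\otimes_M B$ is nontrivial precisely when $B\neq\varnothing$ (as $M\otimes_M B\cong B$), so preservation of binary products upgrades automatically to preservation of all finite products.

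For (2), I would present the equalizer of $f,g\colon A\to B$ as the pullback of the graphs $\langle\mathrm{id}_A,f\rangle,\langle\mathrm{id}_A,g\rangle\colon A\to A\times B$, so that a pullback-preserving $F$ sends it to the pullback $F(A)\times_{F(A\times B)}F(A)$. The main obstacle, and the point the argument turns on, is that $F$ is \emph{not} assumed to preserve the product $A\times B$, so this is not visibly the equalizer of $F(f),F(g)$. The resolution is that pullback-preservation alone forces the canonical comparison $F(A\times B)\to F(A)\times F(B)$ to be a \emph{monomorphism}: writing $A\times B$ as the pullback $A\times_1 B$ over the terminal object exhibits $F(A\times B)$ as $F(A)\times_{F(1)}F(B)$, which embeds into $F(A)\times F(B)$ as the sub-$N$-set on which the two composites to $F(1)$ agree. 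With this joint monicity of $\big(F(p_A),F(p_B)\big)$ in hand, a short element chase identifies $F(A)\times_{F(A\times B)}F(A)$ with $\{x\in F(A)\mid F(f)(x)=F(g)(x)\}$: projecting along $F(p_A)$ collapses a pair $(x,x')$ to the diagonal, projecting along $F(p_B)$ encodes the equation $F(f)(x)=F(g)(x)$, and joint monicity is exactly what is needed to recover, for each $x$ in the equalizer, a genuine point of the pullback.

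For (3), I would assemble left exactness from the pieces already obtained. Preservation of the terminal object together with pullbacks yields preservation of binary products, since $A\times B=A\times_1 B$ and now $F(A)\times_{F(1)}F(B)=F(A)\times_1 F(B)=F(A)\times F(B)$; part (2) supplies equalizers; and a terminal object, binary products, and equalizers generate all finite limits, so $F$ preserves finite limits. Applied to $F=-\otimes_M B$, preservation of the terminal object is precisely indecomposability of $B$ (as recorded in Definition~\ref{dfn:flats} via Proposition~\ref{prop:connected-objects}), and preservation of pullbacks is pullback-flatness; this gives the stated characterisation of flatness, the converse direction being trivial since any flat object preserves the terminal object and pullbacks in particular.
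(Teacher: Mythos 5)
Your proposal is correct and takes essentially the same approach as the paper: part (1) is the paper's diagonal/surjectivity argument recast in terms of subterminality and two-valuedness, and part (3) is identical. In part (2) you present the equalizer as the pullback of the two graphs $\langle\mathrm{id}_A,f\rangle,\langle\mathrm{id}_A,g\rangle \colon A \to A\times B$ where the paper instead pulls back $\langle f,g\rangle$ along the diagonal of $B$, but the crucial ingredient is the same in both arguments: pullback-preservation identifies $F(X\times Y)$ with $F(X)\times_{F(1)}F(Y)$, which embeds as a subobject of $F(X)\times F(Y)$, and this monicity is exactly what rescues the argument when the binary product itself is not known to be preserved.
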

\begin{proof} \ 
\begin{enumerate} 
\item If $F$ preserves binary products, then in particular the natural map
\begin{gather*}
F(1 \times 1) \longrightarrow F(1)\times F(1) \\
x \mapsto (x,x)
\end{gather*}
is an isomorphism. Surjectivity shows that $F(1)$ has at most one element. If $F(1) \simeq \varnothing$ then $F$ is trivial, so $F(1) \simeq 1$ is a singleton.
\item This is a special case of \cite[Chapter 1, \S 1.439]{freyd-scedrov}. Suppose that $F$ preserves pullbacks, and consider a diagram
\begin{equation*}
\begin{tikzcd}
A \ar[r,"{f}",shift left=1] \ar[r,"{g}"',shift right=1] & B
\end{tikzcd}
\end{equation*}
Then we can rewrite the equalizer $E$ of this diagram as a pullback:
\begin{equation*}
\begin{tikzcd}
E \ar[r] \ar[d] \ar[dr, phantom, "\lrcorner", very near start] & B \ar[d,"\delta"] \\
A \ar[r,"{(f,g)}"'] & B \times B 
\end{tikzcd}
\end{equation*}
where $\delta$ is the diagonal map. Pullbacks are preserved by $F$, so 
\begin{equation*}
\begin{tikzcd}
F(E) \ar[r] \ar[d] \ar[dr, phantom, "\lrcorner", very near start] & F(B) \ar[d,"F\delta"'] \ar[r, "\sim"'] \ar[dr, phantom, "\lrcorner", very near start] & F(B) \ar[d, "\delta"'] \\
F(A) \ar[r,"{(Ff,Fg)}"] & F(B) \times_{F(1)} F(B) \ar[r, hook] & F(B) \times F(B)
\end{tikzcd}
\end{equation*}
is a composite of pullback squares; the right hand one being a pullback is a consequence of the fact that the diagonal $\delta$ of $F(B)$ in $\Setswith{N}$ factors through $F\delta$ by the universal property of $F(B) \times F(B)$. It follows that $F(E)$ can be identified with the equalizer of the diagram
\begin{equation*}
\begin{tikzcd}
F(A) \ar[r,"{F(f)}",shift left=1] \ar[r,"{F(g)}"',shift right=1] & F(B)
\end{tikzcd},
\end{equation*}
so $F$ preserves equalizers.
\item Suppose that $F$ preserves pullbacks and the terminal object. A binary product can be seen as a pullback over the terminal object, so $F$ preserves binary products (and in fact all finite products). Moreover, $F$ preserves equalizers by (2). Any functor preserving finite products and equalizers preserves all finite limits, by a similar argument to the above after expressing a given finite limit as an equalizer of morphisms between finite products. 
\end{enumerate}
\end{proof}

In the category of sets, since arbitrary coproducts are pseudo-filtered in the sense of \cite{bjerrum}, they commute with connected limits, such as pullbacks and equalizers. The same holds in the category of $M$-sets for $M$ a monoid, since colimits and limits are computed on underlying sets. So we get the following:
\begin{corollary}[{cf.~\cite[III.3.9]{MAC}}]
\label{crly:eqpb-flat}
Let $B = \bigsqcup_{i \in I} B_i$ be a right $M$-set, with each $B_i$ indecomposable. Then:
\begin{enumerate}
\item $B$ is equalizer-flat if and only if $B_i$ is equalizer-flat for all $i \in I$;
\item $B$ is pullback-flat if and only if $B_i$ is pullback-flat for all $i \in I$.
\end{enumerate}
Since each of the $B_i$ is indecomposable, it follows that $B$ is pullback-flat if and only if $B_i$ is flat for all $i \in I$, so that pullback-flat $M$-sets are precisely the $M$-sets that can be written as disjoint unions of flat $M$-sets.
\end{corollary}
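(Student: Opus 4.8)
The plan is to reduce both parts to the fact recalled immediately above the corollary: in $\Set$, and hence in any category of $M$-sets (where limits and colimits are computed on underlying sets), connected limits commute with arbitrary coproducts. The first step is to observe that the tensor functor detecting flatness of $B$ splits as a coproduct. Writing $F = B \otimes_M (-)$ and $F_i = B_i \otimes_M (-)$, the canonical isomorphism
\[
B \otimes_M A = \Big(\bigsqcup_{i \in I} B_i\Big) \otimes_M A \;\cong\; \bigsqcup_{i \in I}\, \bigl(B_i \otimes_M A\bigr),
\]
natural in $A$ (tensoring preserves coproducts, being a left adjoint in each variable), exhibits $F$ as the pointwise coproduct $\bigsqcup_{i \in I} F_i$.

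Next I would analyse the canonical comparison map for a connected-limit diagram $D$ of shape $J$, taking $J$ to be an equalizer diagram for part (1) and a pullback diagram for part (2). Applying $F$ to the limit projections and using the splitting above, the comparison map
\[
c : F(\lim D) \longrightarrow \lim\, (F\circ D)
\]
decomposes: the source is $F(\lim D) = \bigsqcup_i F_i(\lim D)$, while, because $J$ is connected, the target splits as $\lim (F \circ D) \cong \bigsqcup_i \lim (F_i \circ D)$ by commutation of connected limits with coproducts. One then checks that under these identifications $c$ is the disjoint union $\bigsqcup_i c_i$ of the comparison maps $c_i : F_i(\lim D) \to \lim(F_i \circ D)$ for the individual functors. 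The step I expect to require the most care is precisely this compatibility — that $c$ carries the $i$-th summand of the source into the $i$-th summand of the target and restricts there to $c_i$. This follows from naturality of the coproduct injections together with connectedness of $J$, which forces a compatible family of elements of $\lim(F\circ D)$ to lie in a single summand; everything else is formal.

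Granting the identification $c = \bigsqcup_i c_i$, both directions of (1) and (2) fall out simultaneously, since a coproduct of maps of sets is a bijection if and only if each component is. Thus $c$ is an isomorphism precisely when every $c_i$ is, i.e.\ $F$ preserves the equalizer (resp.\ pullback) if and only if every $F_i$ does. Unwinding the definitions, this says exactly that $B$ is equalizer-flat (resp.\ pullback-flat) if and only if each $B_i$ is.

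Finally, for the closing assertion I would invoke Proposition \ref{prop:flatness-properties}(3): since each $B_i$ is indecomposable, it is flat if and only if it is pullback-flat. Combining this with part (2) yields that $B$ is pullback-flat if and only if every $B_i$ is flat, which identifies the pullback-flat $M$-sets as precisely the disjoint unions of flat $M$-sets.
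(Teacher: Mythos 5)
Your proposal is correct and follows essentially the same route as the paper: the paper derives the corollary directly from the observation (stated in the paragraph preceding it) that coproducts commute with connected limits such as pullbacks and equalizers in $\Set$, hence in $M$-sets, and then invokes Proposition \ref{prop:flatness-properties}(3) for the final identification of pullback-flat $M$-sets with disjoint unions of flat ones. Your write-up merely makes explicit the details the paper leaves implicit, namely the decomposition $B \otimes_M A \cong \bigsqcup_i (B_i \otimes_M A)$ and the fact that the comparison map splits as the coproduct of the individual comparison maps.
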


\section{Properties of the Global Sections Morphism}
\label{sec:1down}

\subsection{Groups and Atomicity}

The first property we study is expressed in terms of the logical structure of the toposes involved.
\begin{dfn}[{Cf. \cite[C3.5]{Ele}}]
A geometric morphism is \textbf{atomic} if its inverse image functor is logical. For a general geometric morphism this implies local connectedness. We say a Grothendieck topos is \textbf{atomic} if its global sections geometric morphism to $\Set$ is.
\end{dfn}

For Grothendieck toposes with enough points, atomicity coincides with a property of the internal logic of the topos.
\begin{dfn}
A topos $\Ecal$ is \textbf{Boolean} if its subobject classifier is an internal Boolean algebra, or equivalently if every subobject of an object of $\Ecal$ has a complement.
\end{dfn}

\begin{xmpl}
For a sober topological space $X$, $\Sh(X)$ has enough points, so we have by \cite[Lemma C3.5.3]{Ele} that $\Sh(X)$ is atomic if and only if it is Boolean, if and only if $X$ is a discrete topological space.
\end{xmpl}

As already observed in \cite[Corollary 7.3]{TDMA}, we have the following:
\begin{thm}[Conditions for $\Setswith{M}$ to be Boolean]
\label{thm:atomic}
Let $M$ be a monoid. The following are equivalent.
\begin{enumerate}
\item $\Setswith{M}$ is atomic.
\item $\Delta$ preserves the subobject classifier
\item $M$ is a group.
\item $\Setswith{M}$ is Boolean.
\end{enumerate}
\end{thm}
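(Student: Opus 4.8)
The plan is to establish the four conditions as equivalent through the chain $(1) \Leftrightarrow (2) \Leftrightarrow (3)$ together with $(3) \Leftrightarrow (4)$, leaning heavily on the strong preservation properties of $\Delta$ recorded in the background. For $(1) \Leftrightarrow (2)$ I would argue that almost nothing needs to be done: atomicity of $\Setswith{M}$ means by definition that $\Delta$ is logical, and since $\Delta$ sits in a triple adjunction $C \dashv \Delta \dashv \Gamma$ it preserves finite products and monomorphisms, while local connectedness of $\Setswith{M}$ guarantees that $\Delta$ is cartesian closed, i.e.\ preserves exponentials. Thus the only clause in the definition of ``logical'' that is not automatic is preservation of the subobject classifier, which is exactly condition $(2)$.

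The core of the argument is $(2) \Leftrightarrow (3)$, which I would carry out by inspecting the comparison map $\chi : \Delta(\Omega_{\Set}) \to \Omega$ from \eqref{eq:chi}. As $\Omega_{\Set} = 2$, the domain $\Delta(2)$ is the two-element set with trivial action, and $\chi$ is an $M$-equivariant map classifying $\Delta(\top)$; hence it sends the two (fixed) elements to the ideals $M$ and $\varnothing$ respectively, these being distinct fixed points of $\Omega$. It follows that $\chi$ is an isomorphism exactly when $\varnothing$ and $M$ are the only right ideals of $M$, i.e.\ when $|\Omega| = 2$. I would then translate this into algebra: the right ideal $aM$ is nonempty for every $a \in M$, so the only-trivial-ideals condition is equivalent to $aM = M$ for all $a$, i.e.\ to every element of $M$ being right invertible. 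The step turning this into a group is the standard fact that a monoid in which every element has a right inverse is already a group: from $ab = 1$ and $bc = 1$ one gets $c = (ab)c = a(bc) = a$, whence $ba = 1$, so $b$ is a two-sided inverse.

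For $(3) \Leftrightarrow (4)$ I would argue as follows. If $M$ is a group, then for any sub-$M$-set $S$ of an $M$-set $X$ the set-complement $X \setminus S$ is again a sub-$M$-set, because the action is by bijections (if $xg \in S$ then $x = (xg)g^{-1} \in S$); this set-complement realises the complement in the subobject lattice, so $\Setswith{M}$ is Boolean. Conversely, assuming $\Setswith{M}$ is Boolean, I would apply complementation to the principal right ideal $aM \subseteq M$, noting that subobjects of $M$ are precisely the right ideals, so that $M \setminus aM$ is again a right ideal; since any right ideal containing $1$ must be all of $M$ and $aM \neq \varnothing$, the element $1$ lies in $aM$, so $a$ is right invertible and $M$ is a group by the argument above. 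The main obstacle I anticipate is not any single difficult step but the careful bookkeeping in $(2) \Leftrightarrow (3)$ --- confirming that $\chi$ genuinely lands among the fixed ideals $\{\varnothing, M\}$ and is invertible exactly when no further right ideals exist --- and correctly invoking the right-invertibility-implies-group fact in the relevant directions.
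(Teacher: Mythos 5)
Your proof is correct, and it deviates from the paper's in one meaningful way: how the Boolean condition is tied back into the cycle. The paper proves $(1 \Leftrightarrow 2)$, $(2 \Rightarrow 3)$, and $(3 \Rightarrow 4)$ essentially as you do (your analysis of the comparison map $\chi$ landing on the two fixed ideals $\varnothing$ and $M$ is just a more explicit version of the paper's observation that preservation forces $\Omega_{\Setswith{M}}$ to have two elements), but it then closes the loop with $(4 \Leftrightarrow 1)$ by citing Johnstone's general result that a Grothendieck topos with enough points is Boolean if and only if it is atomic, using the fact that $\Setswith{M}$ has a canonical surjective point. You instead prove $(4) \Rightarrow (3)$ by hand: complementing the principal right ideal $aM$ inside $M$, noting that the set-theoretic complement must again be a right ideal, and concluding $1 \in aM$ since a right ideal containing $1$ is all of $M$. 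This makes your argument entirely elementary and self-contained --- you never need the enough-points machinery, and you also spell out the standard fact that right-invertibility of all elements already yields a group, which the paper asserts without proof. What the paper's route buys is brevity and an illustration of the general topos-theoretic toolkit (Boolean $\Leftrightarrow$ atomic under enough points), which is thematically central to that paper; what your route buys is independence from that citation and a proof in which every implication is verified directly at the level of $M$-sets and ideals. Both are valid; your bookkeeping in $(2) \Leftrightarrow (3)$ and in the complementation argument checks out.
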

\begin{proof}
($1 \Leftrightarrow 2$) By definition, since we have already seen that $\Delta$ preserves exponentials.

($2 \Rightarrow 3$) The subobject classifier in $\Set$ is the two element set, often denoted $\Omega = \{\top,\bot\}$ so that the canonical subobject is the inclusion of the singleton $\{\top\}$. But then $\Omega \cong 1\sqcup1$, and so the subobject classifier is preserved by $\Delta$ if and only if $\Omega_{\Setswith{M}}$ has an underlying set with two elements, which forces every principal ideal in $M$ to contain the identity element (since the ideal must be all of $M$), and hence every element of $M$ has a right inverse, making $M$ a group.

($3 \Rightarrow 4$) If $M$ is a group, every subobject of an $M$-set is a union of orbits; the remaining orbits form the complementary subobject.

($4 \Leftrightarrow 1$) Since $\Setswith{M}$ has enough points (having a canonical surjective point), by \cite[Corollary C3.5.2]{Ele} being Boolean is equivalent to being atomic over $\Set$.
\end{proof}

It should be noted that many of the remaining properties explored in this paper either trivially hold for groups or are incompatible with the property of being a group (for example, a group with any kind of absorbing element is automatically trivial).

\subsection{Right-factorable Finite Generation and Strong Compactness}

A Grothendieck topos is called \textbf{compact} if its geometric morphism to $\Set$ is \textbf{proper}, which is to say that it preserves filtered colimits of subterminal objects. Since any hyperconnected geometric morphism is proper (cf. \cite[C3.2.13]{Ele}), $\Setswith{M}$ is always compact. However, the stronger notion of being \textbf{strongly compact} is not always satisfied.

\begin{dfn}
A Grothendieck topos $\Ecal$ is called \textbf{strongly compact} if its global sections functor $\Gamma = \Hom_{\Ecal}(1,-)$ preserves filtered colimits. More generally, a geometric morphism $f : \Fcal \to \Ecal$ is called \textbf{tidy} if $f_*$ preserves filtered colimits. Thus a Grothendieck topos is strongly compact if and only if the global sections geometric morphism is tidy.
\end{dfn}

\begin{example}
For a topological space $X$, $\Sh(X)$ is a compact topos if and only if $X$ is a compact space in the usual sense. In \cite[Example C3.4.1(a)]{Ele}, Johnstone gives an example of a compact space $X$ such that $\Sh(X)$ is not strongly compact: $X$ consists of two copies of the unit interval $[0,1]$ with the respective copies of the open interval $(0,1)$ identified, as sketched below.
\begin{center}
\begin{tikzpicture}
\coordinate (A) at (0,0);
\coordinate (B) at (2,0);
\draw [-,thick] (A) -- (B);
\node[draw,circle,inner sep=1pt,fill] at (-.1,.1) {};
\node[draw,circle,inner sep=1pt,fill] at (-.1,-.1) {};
\node[draw,circle,inner sep=1pt,fill] at (2.1,.1) {};
\node[draw,circle,inner sep=1pt,fill] at (2.1,-.1) {};
\end{tikzpicture}
\end{center}
On the other hand, if $X$ is a spectral space or coherent space\footnote{Not to be confused with the coherence spaces appearing in linear logic.} (in the terminology of Hochster \cite{hochster} or Johnstone \cite{Ele} respectively), having a base of compact open sets stable under finite intersections, then $\Sh(X)$ is strongly compact by \cite[Proposition C4.1.13 and Corollary C4.1.14]{Ele}. In particular, $\Sh(X)$ is strongly compact if $X$ is a zero-dimensional compact Hausdorff space or if $X$ is the Zariski spectrum of a commutative ring. More generally, Johnstone shows in \cite[Corollary C4.1.14]{Ele} that any compact Hausdorff space is strongly compact.
\end{example}

In the case where $M$ is a group, we have the following result, which appears in \cite[Example C3.4.1(b)]{Ele}:
\begin{prop} \label{prop:strongly-compact-groups}
For $G$ a group, the topos $\Setswith{G}$ is strongly compact if and only if $G$ is finitely generated.
\end{prop}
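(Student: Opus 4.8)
The plan is to reduce strong compactness to a finiteness condition on the terminal object and then treat the two implications separately. Since $\Gamma = \HOM_G(1,-) = \Fix_G(-)$, the topos $\Setswith{G}$ is strongly compact precisely when $\Fix_G$ preserves filtered colimits, which by Definition \ref{dfn:projectives} is exactly the assertion that the terminal $G$-set $1$ is finitely presentable. Because filtered colimits are computed on underlying sets, I would work throughout with the canonical comparison map $\varinjlim_i \Fix_G(X_i) \to \Fix_G(\varinjlim_i X_i)$ for a filtered diagram $\{X_i\}$. This map is always injective: two representatives that agree in the underlying colimit already agree at some stage, where their common image is a fixed point, so everything will hinge on surjectivity.

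For the forward direction, assume $G = \langle s_1,\dots,s_n\rangle$. Given a fixed point $x$ of the colimit, represent it by some $x_i \in X_i$. Being a global fixed point is equivalent to $x\cdot s_l = x$ for $l = 1,\dots,n$, since a subset of $G$ containing the identity and closed under the generators and their inverses is all of $G$ (note $y\cdot s = y$ forces $y\cdot s^{-1} = y$). Each of these finitely many equalities holds in the colimit, hence at some stage; filteredness then lets me pass to a single stage $X_k$ where all $n$ equalities hold at once, producing a genuine element $x_k \in \Fix_G(X_k)$ lifting $x$. This gives surjectivity, so the comparison is an isomorphism. Equivalently, the relations $x\cdot s_l = x$ exhibit $1$ as the coequalizer of a pair $\bigsqcup_{l=1}^n G \rightrightarrows G$ of finitely generated free $G$-sets, so $1$ is finitely presentable by Proposition \ref{prop:finitely-presentable-topos-vs-algebra}. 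The decisive use of finite generation is the step ``finitely many fixing-relations can be realised simultaneously at one stage of a filtered system.''

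For the converse I would argue the contrapositive. Suppose $G$ is not finitely generated, and index a diagram by the poset of finitely generated subgroups $H \le G$ ordered by inclusion; this is directed, since $H_1, H_2 \subseteq \langle H_1 \cup H_2\rangle$, which is again finitely generated. For $H' \subseteq H$ take the equivariant surjection $H'\backslash G \to H\backslash G$, $H'g \mapsto Hg$, yielding a filtered diagram of transitive right $G$-sets. Its colimit is $1$: any two cosets are identified once $H$ contains the (cyclic, hence finitely generated) subgroup generated by the relevant difference. On the other hand, $Hg$ is a fixed point of $H\backslash G$ iff $gGg^{-1} = G \subseteq H$, i.e.\ iff $H = G$; as no finitely generated subgroup equals $G$, every term satisfies $\Fix_G(H\backslash G) = \varnothing$, whence $\varinjlim_H \Fix_G(H\backslash G) = \varnothing$ while $\Fix_G(1) = 1$. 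Thus $\Gamma$ fails to preserve this filtered colimit and $\Setswith{G}$ is not strongly compact.

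I expect the converse to carry the main difficulty: the substance lies in choosing the coset diagram and verifying the two competing computations, namely that the colimit collapses to the one-point $G$-set while each finite stage has empty fixed-point set. The forward direction is comparatively routine once injectivity is dispensed with, its only real content being the surjectivity argument, where finite generation is exactly what makes the needed finitely many relations available at a common stage.
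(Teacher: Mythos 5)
Your proof is correct and follows essentially the same route as the paper: the paper defers this statement to Johnstone's Example C3.4.1(b) and instead proves the monoid generalization (Theorem \ref{thm:strongly-compact}), whose proof is precisely your argument, with your coset diagram $H\backslash G$ appearing there as the quotients $M/\!\!\sim_S$ by right congruences generated by finite subsets, and your conjugation observation (that $gGg^{-1}=G$ forces $H=G$, so all finite stages have empty fixed-point sets) playing the role of the paper's enlargement step $S'=S\cup\{a\}$, which is only needed in the absence of inverses. Nothing further is required.
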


We will need the following definitions in our characterisation of the monoids $M$ such that $\Setswith{M}$ is strongly compact.

\begin{definition}
Let $M$ be a monoid, and let $S \subseteq M$ be a subset. We say that $S$ is \textbf{right-factorable} if whenever $x \in S$ and $y \in M$ with $xy \in S$, we have $y \in S$. Dually, we may call a subset $S \subseteq M$ left-factorable if whenever $x \in M$ and $y \in S$ with $xy \in S$, we have $x \in S$.
\end{definition}

The above definitions are related to the two-out-of-three property in category theory, in the sense that if we view $M$ as a category with one object, then a submonoid of $M$ has the two-out-of-three property precisely if it is both right-factorable and left-factorable. If $M$ is a commutative, cancellative monoid, then a submonoid $S \subseteq M$ is right-factorable (or equivalently, left-factorable) if and only if $S$ is a \emph{saturated monoid} in the sense of Geroldinger and Halter-Koch \cite{geroldinger-survey}, i.e.\ $S = \mathbf{q}(S) \cap M$, where $\mathbf{q}(S)$ denotes the groupification of $S$.

If $(M_i)_i$ is a family of right-factorable subsets of $M$, then the intersection $\bigcap_{i} M_i$ is also right-factorable (and similarly for families of left-factorable subsets). So we can define the following:

\begin{definition}[{See also \cite[Section 2]{kobayashi}}]
Let $M$ be a monoid, and $S \subseteq M$ a nonempty subset. As is standard, we write $\langle S \rangle_M$ for the submonoid of $M$ generated by $S$. We define $\langle{S}\rangle\rangle_M$ to be the smallest right-factorable subset of $M$ that contains $S$, and call this the \textbf{right-factorable submonoid generated by $S$}; the extra bracket on the right is intended to evoke the asymmetric extra property this submonoid satisfies compared with $\langle S \rangle_M$. We say that $S$ \textbf{right-factorably generates} $M$ if $\langle{S}\rangle\rangle_M = M$. We call $M$ \textbf{right-factorably finitely generated} if there is a finite subset $S \subseteq M$ such that $S$ right-factorably generates $M$. Dually, we can define $\langle\langle{S}\rangle_M$ to be the left-factorable submonoid generated by $S$, i.e.\ the smallest left-factorable submonoid containing $S$.
\end{definition}

The properties of being right-factorable and right-factorably finitely generated appear in the semigroup literature under the name \textit{left unitary} and \textit{left unitarily finitely generated} see e.g.\ \cite[p.63]{howie-95}, \cite[Definition 4.38]{MAC}. We prefer to employ terms which convey the elementary notion in terms of elements of the monoid, viewed as morphisms.

The submonoid right-factorably generated by any subset can be computed inductively; a functionally identical construction is given by Kobayashi in \cite[before Proposition 2.4]{kobayashi}.
\begin{lemma}
\label{lem:construct}
Given an non-empty subset $S = S_0 \subseteq M$, inductively define for $i \in \Nbb$ the subset $S_{i+1}\subseteq M$ to be $\{m \in M \mid \exists t \in \langle S_i \rangle_M, \, tm \in \langle S_i \rangle_M\}$. Then $\langle S \rangle\rangle_M$ is precisely the subset $\bigcup_{i \in \Nbb} S_i$.
\end{lemma}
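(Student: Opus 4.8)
The plan is to prove the equality by double inclusion, writing $R := \langle S \rangle\rangle_M$ for the smallest right-factorable submonoid of $M$ containing $S$ and $T := \bigcup_{i \in \Nbb} S_i$ for the constructed set. Before starting, I would record the basic monotonicity of the construction: since $S_i \subseteq \langle S_i \rangle_M$ and $1 \in \langle S_i \rangle_M$, taking the witness $t = 1$ in the defining condition of $S_{i+1}$ shows $S_i \subseteq S_{i+1}$ for every $i$. Hence the sequence $(S_i)$ is increasing, $T$ is a directed union, and any finite collection of elements of $T$ already lies in a common stage $S_k$. This observation is the engine behind every subsequent step.

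For the inclusion $T \subseteq R$, I would argue by induction that $S_i \subseteq R$ for all $i$. The base case $S_0 = S \subseteq R$ holds by definition. For the inductive step, suppose $S_i \subseteq R$. Since $R$ is a submonoid, $\langle S_i \rangle_M \subseteq R$; so if $m \in S_{i+1}$, witnessed by some $t \in \langle S_i \rangle_M$ with $tm \in \langle S_i \rangle_M$, then both $t$ and $tm$ lie in $R$, and right-factorability of $R$ forces $m \in R$. Thus $S_{i+1} \subseteq R$, completing the induction and giving $T \subseteq R$.

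For the reverse inclusion, I would show that $T$ is itself a right-factorable submonoid containing $S$; minimality of $R$ then yields $R \subseteq T$. Containment of $S = S_0$ is immediate, and $1 \in S_1 \subseteq T$ by taking $t = m = 1$. To see that $T$ is closed under multiplication, given $a, b \in T$ I would place them in a common stage $S_k$ using monotonicity; then $ab \in \langle S_k \rangle_M$, so taking $t = 1$ shows $ab \in S_{k+1} \subseteq T$. Right-factorability is the analogous move: if $x, xy \in T$, I put both in a common $S_k \subseteq \langle S_k \rangle_M$ and read off $y \in S_{k+1}$ using the witness $t = x$, since then $tm = xy \in \langle S_k \rangle_M$.

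I expect no genuinely hard obstacle, but the step requiring the most care is verifying that $T$ is a right-factorable submonoid (the reverse inclusion), since it is exactly there that monotonicity must be invoked both to collapse finitely many elements into a single stage $S_k$ and to promote the witnessing relations one level up to $S_{k+1}$. I would also flag at the outset that the relevant object $\langle S \rangle\rangle_M$ is to be read as the smallest right-factorable \emph{submonoid} containing $S$ (consistent with the terminology ``right-factorable submonoid generated by $S$''): this is used implicitly in the inductive step through the closure $\langle S_i \rangle_M \subseteq R$, and indeed the construction, which passes through the generated submonoids $\langle S_i \rangle_M$, computes this submonoid rather than merely the smallest right-factorable subset.
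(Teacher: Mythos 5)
Your proposal is correct and follows essentially the same route as the paper: the inclusion $\bigcup_i S_i \subseteq \langle S \rangle\rangle_M$ by induction on $i$, using that $\langle S \rangle\rangle_M$ is closed under composition and right factors, and the reverse inclusion by checking that the (increasing) union is itself a right-factorable submonoid containing $S$ and invoking minimality. Your closing remark that $\langle S \rangle\rangle_M$ must be read as the smallest right-factorable \emph{submonoid} rather than the smallest right-factorable \emph{subset} is a correct and genuinely needed clarification: the two notions differ (in $(\Nbb,+)$ the smallest right-factorable subset containing $\{1\}$ is $\{0,1\}$, which is not a submonoid, while $\bigcup_i S_i = \Nbb$), and the paper's own proof, like yours, silently uses closure under composition and hence the submonoid reading.
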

\begin{proof}
We show first inductively that $S_i \subseteq \langle S \rangle\rangle_M$ for each $i$. By definition, $S_0 \subseteq \langle S \rangle\rangle_M$. Given that $S_{i-1} \subseteq \langle S \rangle\rangle_M$, to be closed under composition, $\langle S \rangle\rangle_M$ must certainly contain $\langle S_{i-1} \rangle_M$. It follows from the definition of $S_i$ that any right-factorable subset containing $S_{i-1}$ must contain $S_i$, as claimed.

Conversely, $\bigcup_{i \in \Nbb} S_i$ is right-factorable: given $x,y$ in the union, they are both contained in some $S_i$ for some sufficiently large index $i$, and hence $xy \in S_{i+1} \subseteq \bigcup_{i \in \Nbb} S_i$. Similarly, given $t \in S_i$, $m \in M$, with $tm \in \bigcup_{i \in \Nbb} S_i$, we have $tm \in S_j$ for some $j \geq i$, and hence $m \in S_{j+1}$. Thus we are done.
\end{proof}

\begin{rmk}
\label{rmk:leftOre}
If $S_0$ is a subset, or more particularly a submonoid, satisfying the left Ore condition (the dual of Definition \ref{dfn:rOre} below), we find that $S_2 = S_1$: for $m \in S_2$ we have $t = t_1 \cdots t_k$ and $s = s_1 \cdots s_l$ in $\langle S_1 \rangle_M$ such that $tm = s$ and moreover $x_1, \dotsc, x_k$, $y_1, \dotsc, y_l$ in $S_0$ with $x_it_i$ and $y_js_j$ in $\langle S_0 \rangle_M$ for each $i$ and $j$. The left Ore condition allows us to inductively construct from these an element $z \in S_0$ such that $zs$ and $zt$ are both members of $\langle S_0 \rangle_M$, whence $m \in S_1$. Thus the construction described in Lemma \ref{lem:construct} terminates after a single step.
\end{rmk}

\begin{lemma} \label{lem:quotient-rffg}
Let $S \subseteq M$ be a non-empty subset and let $\sim_S$ be the right congruence generated by the relations $x \sim 1$ for $x \in S$. Then:
\begin{equation*}
\langle{S}\rangle\rangle_M = \{ m \in M : m \sim_S 1 \}.
\end{equation*}
\end{lemma}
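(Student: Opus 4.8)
The plan is to prove the two inclusions separately, writing $T := \langle S \rangle\rangle_M$ for the right-factorable submonoid and $R := \{m \in M : m \sim_S 1\}$ for the $\sim_S$-class of the identity. It is worth keeping in mind that $R$ is exactly the right stabiliser of the point $[1]$ in the cyclic right $M$-set $M/\!\!\sim_S$, since $[1]\cdot m = [m]$ equals $[1]$ precisely when $m \in R$; this makes the closure properties of $R$ transparent.

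For the inclusion $T \subseteq R$, I would show that $R$ is a right-factorable submonoid containing $S$ and then invoke the minimality of $T$. That $S \subseteq R$ is immediate from the generating relations. That $R$ is a submonoid follows from the right-congruence property: if $m, m' \sim_S 1$ then $mm' \sim_S m' \sim_S 1$. Right-factorability is equally quick: if $x \in R$ and $xy \in R$, then multiplying $x \sim_S 1$ on the right by $y$ gives $xy \sim_S y$, and combining with $xy \sim_S 1$ yields $y \sim_S 1$. Hence $T \subseteq R$.

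The substantive direction is $R \subseteq T$. The naive approach, inducting on the length of a chain of elementary moves $m \leftrightarrow xm$ witnessing $m \sim_S 1$, is awkward because the intermediate terms of such a chain need not lie in $T$. Instead I would exploit the minimality of $\sim_S$ from the other side, by exhibiting a right congruence $\equiv$ that contains the generating relations and whose class of $1$ lies in $T$. The natural candidate is the syntactic-style congruence
\[
a \equiv b \quad :\Longleftrightarrow \quad \forall\, w \in M \ \bigl(aw \in T \iff bw \in T\bigr).
\]
This is visibly an equivalence relation and is right-compatible (to test $an \equiv bn$, replace $w$ by $nw$ in the definition). Taking $w = 1$ shows that $a \equiv 1$ forces $a \in T$, so the $\equiv$-class of $1$ is contained in (in fact equal to) $T$. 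Conversely, the two defining properties of $T$ give $x \equiv 1$ for every $x \in S$: since $S \subseteq T$ each such $x$ lies in $T$, and for any $a \in T$ one has $aw \in T \iff w \in T$, the implication $(\Leftarrow)$ holding because $T$ is closed under products and $(\Rightarrow)$ because $T$ is right-factorable. Thus $\equiv$ is a right congruence containing all the relations $x \sim 1$, so $\sim_S \subseteq \equiv$ by minimality, and therefore $m \sim_S 1 \Rightarrow m \equiv 1 \Rightarrow m \in T$.

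The main obstacle is precisely this reverse inclusion, and the key idea is to avoid tracking chains of moves by packaging $T$ into an auxiliary right congruence. The crux is the verification that the $\equiv$-class of $1$ equals $T$, which is exactly where the two characterising properties of the right-factorable submonoid, namely being a submonoid and being right-factorable, are each used once.
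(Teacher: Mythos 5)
Your proof is correct, and while your first inclusion coincides with the paper's argument (show that the $\sim_S$-class of $1$ is a right-factorable submonoid containing $S$, then invoke minimality of $\langle S\rangle\rangle_M$), your second inclusion takes a genuinely different route. The paper argues by contradiction using the explicit description of $\sim_S$ via chains of one-step relations $m' \sim xm'$ with $x \in S$: if $m \sim_S 1$ but $m \notin \langle S\rangle\rangle_M$, some one-step relation in a connecting chain must cross the boundary of $\langle S\rangle\rangle_M$, and each crossing direction is refuted, one by closure under products, the other by right-factorability. (Incidentally, this shows your worry about the naive chain induction is misplaced: the point of the paper's argument is precisely that the intermediate terms \emph{cannot} leave $T$.) You instead package $T$ into the syntactic right congruence $\equiv$ and use only the universal property of $\sim_S$ as the smallest right congruence containing the generating relations; the two characterising properties of $T$ enter in verifying $x \equiv 1$ for $x \in S$, exactly as they enter the paper's two contradiction cases. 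What your approach buys is independence from the chain-of-elementary-moves description of a generated congruence, so no induction over chain length is needed, which also makes the argument more self-contained and more readily relativised (a concern the paper raises in its conclusion about constructive content). What the paper's approach buys is brevity, once that explicit description of $\sim_S$ is taken for granted. Both proofs hinge on the same two facts about $\langle S\rangle\rangle_M$: closure under products and right-factorability.
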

\begin{proof}
$\subseteq$. It is enough to show that $\{ m \in M : m \sim_S 1 \}$ is a right-factorable submonoid. If $m \sim_S 1$ and $m' \sim_S 1$ then $mm' \sim_S m'$, and by transitivity $mm' \sim_S 1$. Further, if $m \sim_S 1$ and $mm' \sim_S 1$ then $mm' \sim_S m'$ so it follows that $m' \sim_S 1$.

$\supseteq$. Take $m \in M$. If $m \sim_S 1$ then there must be some one-step relation of the form $m' \sim xm'$ with $xm' \in \langle{S}\rangle\rangle_M$ and $m' \notin \langle{S}\rangle\rangle_M$ or $xm' \notin \langle{S}\rangle\rangle_M$ and $m' \in \langle{S}\rangle\rangle_M$. Both cases lead to a contradiction.
\end{proof}

With the above definitions, it is possible to generalise Proposition \ref{prop:strongly-compact-groups} to arbitrary monoids $M$, by adapting Johnstone's proof \cite[C3.4.1(b)]{Ele}.
\begin{theorem}[Conditions for $\Setswith{M}$ to be strongly compact] \label{thm:strongly-compact}
For a monoid $M$, the following are equivalent:
\begin{enumerate}
\item $\Gamma$ preserves filtered colimits, i.e.\ $\Setswith{M}$ is strongly compact.
\item The terminal right $M$-set $1$ is finitely presentable.
\item $M$ is right-factorably finitely generated.
\end{enumerate}
\end{theorem}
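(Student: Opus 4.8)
The plan is to treat $(1)\Leftrightarrow(2)$ as essentially definitional and then prove the two implications $(3)\Rightarrow(2)$ and $(1)\Rightarrow(3)$ separately, the latter carrying the real content. For $(1)\Leftrightarrow(2)$ there is nothing to prove beyond unwinding definitions: since $\Gamma=\HOM_M(1,-)$, the assertion that $\Gamma$ preserves filtered colimits is, by Definition \ref{dfn:projectives}, exactly the assertion that the terminal $M$-set $1$ is finitely presentable.

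For $(3)\Rightarrow(2)$ I would exhibit an explicit finite presentation of $1$ and invoke Proposition \ref{prop:finitely-presentable-topos-vs-algebra}. Suppose $S$ is finite with $\langle\langle S\rangle\rangle_M=M$. Take $F'=M$ (free on one generator $g=1$) and $F=\bigsqcup_{s\in S}M$ (free on generators $h_s$), and define $M$-set maps $a,b:F\to F'$ by $a(h_s)=s$ and $b(h_s)=1$. The coequalizer of $a,b$ is $M$ modulo the right congruence generated by the relations $s\sim 1$ for $s\in S$, which is precisely $\sim_S$ from Lemma \ref{lem:quotient-rffg}; since $\langle\langle S\rangle\rangle_M=M$, that lemma gives $M/\!\sim_S\;\cong 1$. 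As $F$ and $F'$ are finitely generated free, Proposition \ref{prop:finitely-presentable-topos-vs-algebra} shows $1$ is finitely presentable.

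The substance is $(1)\Rightarrow(3)$, adapting Johnstone's argument by realising $1$ as a canonical filtered colimit and feeding it to $\Gamma$. Order the nonempty finite subsets $S\subseteq M$ by inclusion; for $S\subseteq S'$ we have $\sim_S\,\subseteq\,\sim_{S'}$, hence quotient maps $M/\!\sim_S\to M/\!\sim_{S'}$, giving a filtered diagram whose colimit is $M$ modulo $\bigcup_S\sim_S$. Since any pair $a,b$ satisfies $a\sim_{\{a,b\}}1\sim_{\{a,b\}}b$, this union is the total congruence, so $\varinjlim_S M/\!\sim_S\;\cong 1$. Assuming $(1)$, the functor $\Gamma=\Fix_M$ preserves this colimit, so $\{\ast\}=\Fix_M(1)\cong\varinjlim_S\Fix_M(M/\!\sim_S)$. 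A filtered colimit of sets is nonempty only if one of its terms is, so some $\Fix_M(M/\!\sim_{S_0})$ is nonempty: there is $x\in M$ with $xm\sim_{S_0}x$ for all $m\in M$.

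The final step — the one requiring the defining feature of right-factorability — is to enlarge $S_0$ by the single element $x$. Put $S_1=S_0\cup\{x\}$ and $T=\langle\langle S_1\rangle\rangle_M=\{y: y\sim_{S_1}1\}$. Then $x\in T$, and for every $m\in M$ we have $xm\sim_{S_1}x\sim_{S_1}1$ (using $\sim_{S_0}\subseteq\,\sim_{S_1}$ and $x\sim_{S_1}1$), so $xm\in T$; right-factorability of $T$ then forces $m\in T$. Hence $T=M$, so $S_1$ is a finite right-factorable generating set and $(3)$ holds. I expect the main obstacle to be spotting the correct filtered presentation of $1$ and recognising that a single fixed point $[x]$, once $x$ is adjoined to the generating set, already collapses the whole monoid by right-factorability; by contrast the verification that $\varinjlim_S M/\!\sim_S\cong 1$ and the nonemptiness extraction are routine once the diagram is in place.
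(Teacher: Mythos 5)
Your proposal is correct. The implication $(1)\Rightarrow(3)$ is essentially the paper's own argument: the same filtered diagram $\varinjlim_S M/\!\!\sim_S\;\cong 1$ indexed by finite subsets, the same extraction of a fixed point $[x]$ in some $M/\!\!\sim_{S_0}$, and the same enlargement of the generating set by $x$ (the paper phrases the final step by observing that $[1]$ becomes a fixed point of $M/\!\!\sim_{S_0\cup\{x\}}$ and applying Lemma \ref{lem:quotient-rffg}, where you instead invoke right-factorability of $T = \langle{S_0\cup\{x\}}\rangle\rangle_M$ directly; these are the same computation). Where you genuinely diverge is in closing the cycle of implications: the paper proves $(3)\Rightarrow(1)$ directly, taking an arbitrary filtered colimit $\varinjlim_i S_i$, checking injectivity and surjectivity of the comparison map by hand, and using the inductive construction of Lemma \ref{lem:construct} to propagate ``fixed by $S$'' to ``fixed by $\langle{S}\rangle\rangle_M = M$''. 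You instead prove $(3)\Rightarrow(2)$ by exhibiting the explicit coequalizer presentation of $1$ by the parallel pair $\bigsqcup_{s\in S} M \rightrightarrows M$ with coequalizer $M/\!\!\sim_S\;\cong 1$, and then invoking Proposition \ref{prop:finitely-presentable-topos-vs-algebra}. Your route is shorter and perfectly legitimate, since that proposition is stated in the paper's background (it is a cited result of Ad\'amek--Rosick\'y), and it makes the finite presentation of $1$ pleasingly concrete; the trade-off is that it outsources the analytically substantial content---that an algebraic finite presentation forces $\HOM_M(1,-)$ to preserve filtered colimits---to the cited result, whereas the paper's hands-on proof of $(3)\Rightarrow(1)$ keeps the theorem self-contained and shows exactly how right-factorable generation makes fixed points under finitely many elements propagate to fixed points under all of $M$.
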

\begin{proof}
($1 \Leftrightarrow 2$) Note that $\Gamma = \HOM_M(1,-)$. The equivalence is then by definition, see Definition \ref{dfn:projectives}. See also Proposition \ref{prop:finitely-presentable-topos-vs-algebra} which states that the topos-theoretic and algebraic definitions of ``finitely presentable'' agree.

($1 \Rightarrow 3$) For $S \subseteq M$ a finite subset, let $\sim_S$ be the right congruence as defined in Lemma \ref{lem:quotient-rffg}. Clearly $1 = \varinjlim_S M/\!\!\sim_S\,$, where we take the filtered colimit over all finite subsets of $M$. Now consider the comparison map
\begin{equation*}
\beta : \varinjlim_S \,\Gamma(M/\!\!\sim_S) \longrightarrow 1
\end{equation*}
If $\Gamma$ preserves filtered colimits, then in particular $\beta$ is an isomorphism. By surjectivity of $\beta$, $M/\sim_S$ has a fixed point for some finite subset $S$ of $M$, we will denote it by $[a]$ for some representative $a \in M$. Its projection in $M/\!\!\sim_{S'}$ is again a fixpoint for $S' = S\cup \{a\}$. Now $[1]$ is a fixpoint in $M/\!\!\sim_{S'}$. But this means that $m \sim_{S'} 1$ for all $m \in M$. By Lemma \ref{lem:quotient-rffg}, $S'$ right-factorably generates $M$.

($3 \Rightarrow 1$) Take a finite subset $S \subseteq M$ such that $\langle{S}\rangle\rangle_M = M$. Consider a filtered colimit $\varinjlim_i S_i$ of right $M$-sets $S_i$. We have to show that the natural map 
\begin{equation*}
\varinjlim_i \Gamma(S_i) \longrightarrow \Gamma(\varinjlim_i S_i)
\end{equation*}
is a bijection. Injectivity is immediate from filteredness, so we prove surjectivity. Take an element $x$ in $\Gamma(\varinjlim_i S_i)$. Let $x$ be represented by an element $x_i \in S_i$. For each $s \in S$, we can find an index $j$ and a structure morphism $\phi_{ji} : S_i \to S_j$ such that $\phi_{ji}(xs) = \phi_{ji}(x)$. Since $S$ is finite we can find a common index $k$ such that $\phi_{ki}(xs) = \phi_{ki}(x)$ for all $s \in S$. Since $\phi_{ki}(x)$ is fixed by all $s \in S$, it is also fixed by all elements of $\langle{S}\rangle\rangle_M = M$: indeed, employing the inductive construction of Lemma \ref{lem:construct}, if $x$ is fixed by $S_i$, then it is fixed by $\langle S_i \rangle_M$, and given $m \in M$, $t \in \langle S_i \rangle_M$ with $tm \in \langle S_i \rangle_M$, we have $xm = xtm = x$, so $x$ is fixed by $S_{i+1}$, and hence inductively by $\bigcup_{i \in \Nbb} S_i = M$. So $x$ is represented by an element of $\Gamma(S_k)$.
\end{proof}
The (dual of the) equivalence ($2 \Leftrightarrow 3$) appears in the semigroup theory work of Dandan, Gould, Quinn--Gregson and Zenab \cite[Theorem 3.10]{dandan-gould-quinn-gregson-zenab}, amongst some other equivalent conditions.

\begin{example} \label{xmpl:rwg} \ 
If $M$ has a right-absorbing element $r$ in the sense of Definition \ref{dfn:absorb} then $M$ is right-factorably generated by $\{r\}$ (since $r\cdot m = r$ for all $m \in M$). In particular $M$ is right-factorably finitely generated, so $\Setswith{M}$ is strongly compact. Given a non-empty set $S$, the monoid $M=\End(S)$ of (total) functions $S \to S$ has a right-absorbing element for each element $s \in S$ given by the constant function at $s$, so $M$ is right-factorably finitely generated by the above.
\end{example}

\begin{example}
We already saw that for a group $G$, the topos $\Setswith{G}$ is strongly compact if and only if $G$ is finitely generated. We can use this to produce some more examples of monoids $M$ such that $\Setswith{M}$ is \textit{not} strongly compact. Let $G$ be a group and let $M \subseteq G$ be a submonoid such that
\begin{equation*}
G = \{ a^{-1}b : a, b \in M \}.
\end{equation*}
Note that $\langle{M}\rangle\rangle_G = G$. Now if $S \subseteq M$ is a finite set right-factorably generating $M$, then $\langle{S}\rangle\rangle_G$ contains $M$ and is closed under right factors in $G$, so $\langle{S}\rangle\rangle_G = G$. It follows that $G$ is finitely generated. Therefore, for the following monoids $M$ the topos $\Setswith{M}$ is not strongly compact:  
\begin{itemize}
\item The monoid $\mathbb{N}^{\times}$ of nonzero natural numbers under multiplication;
\item The monoid $R-\{0\}$ for $R$ an infinite commutative ring without zero-divisors (noting that finitely generated fields are finite);
\item The monoid of non-singular $n\times n$ matrices over $R$, for $R$ an infinite commutative ring without zero divisors.
\end{itemize}
\end{example}

The topos $\Setswith{\mathbb{N}^{\times}}$ is (the underlying topos of) the Arithmetic Site by Connes and Consani \cite{connes-consani}. It follows that this topos is not strongly compact. In \cite{connes-consani-geometry-as}, the monoid $\mathbb{N}^{\times}_0 = \mathbb{N}^{\times} \cup \{0\}$ is considered as well, and in this case the topos $\PSh(\mathbb{N}^{\times}_0)$ is strongly compact, by Example \ref{xmpl:rwg}.

Note that the equivalence ($1 \Leftrightarrow 2$) in Theorem \ref{thm:strongly-compact} directly generalises to geometric morphisms $f: \Setswith{M} \to \Setswith{N}$ induced by a tensor-hom adjunction as in Proposition \ref{prop:flat-functor-categorically-flat}. So we have the following equivalence:

\begin{crly}
\label{crly:tidiness}
Let $M$ and $N$ be monoids, and let $B$ be a left-$M$-right-$N$-set. Suppose that the action of $M$ is flat, so that the adjunction induced by $B$ is a geometric morphism $f : \Setswith{N} \to \Setswith{M}$. Then $f$ is tidy if and only if $B$ is finitely presented as right $N$-set.
\end{crly}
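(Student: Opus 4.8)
The plan is to strip the statement down to the definition of finite presentability by pushing everything through the forgetful functor. By Proposition~\ref{prop:adjunction}, $B$ induces the adjunction $-\otimes_M B \dashv \HOM_N(B,-)$, and the flatness hypothesis on the left $M$-action guarantees, via Proposition~\ref{prop:flat-functor-categorically-flat}, that $f^* = -\otimes_M B$ preserves finite limits; hence $f$ is genuinely a geometric morphism, with direct image $f_* = \HOM_N(B,-) \colon \Setswith{N} \to \Setswith{M}$. By definition $f$ is tidy precisely when $f_*$ preserves filtered colimits, so the entire content is to compare this with the condition of Definition~\ref{dfn:projectives}.

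The key observation is that the two notions of ``preserving filtered colimits'' at play differ only in the codomain. Composing $f_*$ with the forgetful functor $U \colon \Setswith{M} \to \Set$ of \eqref{eq:can-point-and-global-sections} returns exactly the functor $\HOM_N(B,-) \colon \Setswith{N} \to \Set$ whose preservation of filtered colimits \emph{defines} finite presentability of $B$; indeed the underlying set of the right $M$-set $\HOM_N(B,A)$ is the hom-set $\HOM_N(B,A)$ itself, naturally in $A$. So it suffices to prove that $f_*$ preserves filtered colimits if and only if $U \circ f_*$ does.

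I would establish this last equivalence using two properties of $U$: it preserves all colimits (so the forward implication is immediate), and it is conservative, being the inverse image of the canonical surjective point of $\Setswith{M}$. For the converse, given a filtered diagram $\{X_i\}$ in $\Setswith{N}$, one applies $U$ to the canonical comparison map $\colim_i f_*(X_i) \to f_*(\colim_i X_i)$; since $U$ preserves the colimit on the left and $U \circ f_*$ preserves the filtered colimit on the right, the resulting map of underlying sets is an isomorphism, whence the original map is an isomorphism by conservativity of $U$. Chaining the equivalences ($f$ tidy $\Leftrightarrow$ $f_*$ preserves filtered colimits $\Leftrightarrow$ $U \circ f_*$ preserves filtered colimits $\Leftrightarrow$ $B$ finitely presentable) completes the argument.

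The only genuine obstacle is the bookkeeping around this change of codomain; there is no combinatorial content on $M$ or $N$ to contend with, as the heavy lifting is done entirely by the conservativity and colimit-preservation of $U$. As the surrounding text anticipates, specialising to the global sections morphism---the trivial target monoid together with $B = 1$---recovers the equivalence $(1 \Leftrightarrow 2)$ of Theorem~\ref{thm:strongly-compact}.
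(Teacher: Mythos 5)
Your proof is correct and follows essentially the same route as the paper, which presents this corollary as a direct generalisation of the equivalence ($1 \Leftrightarrow 2$) of Theorem \ref{thm:strongly-compact}, i.e.\ as an unfolding of the definitions of tidiness and finite presentability once $f_*$ is identified with $\HOM_N(B,-)$. The one point where you go beyond the paper's (terse) argument is in making explicit the change-of-codomain step---that preservation of filtered colimits by $f_* \colon \Setswith{N} \to \Setswith{M}$ is equivalent to preservation by $U \circ f_* \colon \Setswith{N} \to \Set$, via cocontinuity and conservativity of $U$---a detail the paper leaves implicit.
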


\subsection{Right absorbing elements and Localness}

The properties that we explore in this subsection are derived from the concept of localness.
\begin{dfn}[see {\cite[C3.6]{Ele}}]
\label{dfn:local}
A Grothendieck topos is called \textbf{local} if its global sections functor has a right adjoint. More generally, a geometric morphism $f: \Fcal \to \Ecal$ is \textbf{local} if its direct image functor $f_*$ has an $\Ecal$-indexed right adjoint.
\end{dfn}

To get some geometrical intuition for this concept, we mention the following criterion for localness for the topos of sheaves on a topological space.

\begin{proposition}[{Cf. \cite[C1.5, pp.~523]{Ele}}]
\label{prop:localspace}
Let $X$ be a sober topological space. Then the following are equivalent:
\begin{enumerate}
\item $X$ has a \textbf{focal point}, i.e.\ a point such that the only open set containing it is $X$ itself; being sober makes such a point unique if it exists.
\item $\Sh(X)$ is a local topos.
\end{enumerate}
\end{proposition}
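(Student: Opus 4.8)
The plan is to prove both implications by identifying, whenever it exists, the global sections functor $\Gamma$ with the stalk functor at the focal point, using soberness to translate between points of $X$ and points of $\Sh(X)$. Recall that for $p \in X$ the inverse image of the associated point is the stalk functor $p^* : \Sh(X) \to \Set$, $p^*(F) = \varinjlim_{U \ni p} F(U)$, a filtered colimit over the open neighbourhoods of $p$ ordered by reverse inclusion; being a left adjoint, $p^*$ always comes with a right adjoint $p_*$.

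For $(1 \Rightarrow 2)$: if $p$ is a focal point, then the only open neighbourhood of $p$ is $X$ itself, so the diagram indexing the colimit defining $p^*(F)$ reduces to the single object $X$. Hence $p^*(F) = F(X) = \Gamma(F)$, naturally in $F$, so $\Gamma \cong p^*$ has a right adjoint and $\Sh(X)$ is local by definition.

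For $(2 \Rightarrow 1)$: conversely, suppose $\Gamma$ has a right adjoint $\nabla$. Since $\Gamma = \Hom_{\Sh(X)}(1,-)$ is representable it preserves all limits, in particular finite limits; being left exact and equipped with a right adjoint, the pair $(\Gamma, \nabla)$ is exactly the data of a geometric morphism $w : \Set \to \Sh(X)$, i.e.\ a point of the topos, with $w^* = \Gamma$. As $X$ is sober, $w$ is, up to isomorphism, the point associated to some $p \in X$, whence $\Gamma \cong p^*$. This matches the general fact (Johnstone C1.5) that a local topos carries a canonical focal point whose inverse image is $\Gamma$; the contribution of soberness is to realise that point inside $X$.

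It remains to extract the focal condition from the isomorphism $\Gamma \cong p^*$, and this is the step I expect to require the most care. I would carry it out by testing on subterminal objects: subobjects of $1$ in $\Sh(X)$ correspond to open sets $U \subseteq X$, the sheaf $\mathbf{U}$ having $\mathbf{U}(V)$ a singleton when $V \subseteq U$ and empty otherwise. Then $\Gamma(\mathbf{U}) = \mathbf{U}(X)$ is a singleton iff $U = X$, whereas $p^*(\mathbf{U}) = \varinjlim_{V \ni p} \mathbf{U}(V)$ is a singleton iff some neighbourhood $V$ of $p$ satisfies $V \subseteq U$, i.e.\ iff $p \in U$. Equating these for every open $U$ yields $p \in U \iff U = X$, which says precisely that the only open set containing $p$ is $X$, so $p$ is focal. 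Uniqueness of such a point when it exists is immediate from soberness, as noted in the statement.
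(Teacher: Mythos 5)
Your proof is correct. Note that the paper offers no argument for this proposition at all: it is stated with a citation to Johnstone (C1.5), so what you have written is a self-contained replacement for that citation rather than a variant of an in-paper proof. Both directions hold up. For $(1 \Rightarrow 2)$, the observation that the filtered colimit defining the stalk at a focal point collapses to the single value $F(X)$ gives $\Gamma \cong p^*$, whose right adjoint is the skyscraper functor $p_*$; this is exactly what the paper's Definition \ref{dfn:local} of a local Grothendieck topos requires. For $(2 \Rightarrow 1)$, you correctly package $\Gamma$ (left exact, being representable) together with its right adjoint as a point of $\Sh(X)$, invoke the sobriety correspondence between points of $X$ and isomorphism classes of points of $\Sh(X)$ --- a fact the paper itself records at the end of Section \ref{ssec:toposes} --- and your subterminal test then extracts focality cleanly: $\Gamma$ applied to the subterminal associated to an open $U$ is nonempty iff $U = X$, while the stalk at $p$ is nonempty iff $p \in U$, forcing these conditions to coincide. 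Compared with Johnstone's locale-theoretic and indexed treatment, your route buys elementarity and alignment with the paper's stated (unindexed) definition of localness, at the cost of treating the sobriety correspondence as a black box; since the paper assumes sobriety precisely to make that correspondence available, this is a fair trade.
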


In particular, if $R$ is a commutative ring then the topos of sheaves on $\mathrm{Spec}(R)$ (with the Zariski topology) is a local topos if and only if $R$ has a unique maximal ideal, or in other words if and only if $R$ is a local ring.

\begin{dfn}
\label{dfn:absorb}
An element $m$ of a monoid $M$ is called \textbf{right absorbing} if it absorbs anything on its right, so $mn=m$ for all elements $n \in M$; \textbf{left absorbing} elements are defined dually. An element which is both left and right absorbing is called a \textbf{zero element}, because the $0$ of a commutative ring has this property in the ring's multiplicative monoid. This final case is of the broadest interest, but since left and right absorbing elements manifest themselves very differently in $\Setswith{M}$, we study them independently. Note that if a monoid has both left absorbing element $l$ and a right absorbing element $r$, then $l=r$ is a zero element (which is automatically unique).
\end{dfn}

This convention of handedness of absorbing elements is somewhat arbitrary, since one could alternatively take `right absorbing' to mean `absorbs all elements when multiplied on the right'. Both conventions appear in semigroup and semiring literature; we follow \cite{golan}.

A result allowing us to compare $\Gamma$ and $C$ which we have not yet had cause to introduce is the following, appearing in a more general form in Johnstone's work, \cite[Corollary 2.2(a)]{PLC}:
\begin{lemma}
\label{lem:alpha}
Let $f:\Fcal \to \Ecal$ be a connected essential geometric morphism, so the unit $\eta$ of $(f^* \dashv f_*)$ and the counit $\delta$ of $(f_! \dashv f^*)$ are isomorphisms. Write $\epsilon$ for the counit of the former and $\nu$ for the unit of the latter adjunction. Then there is a canonical comparison natural transformation $\alpha:f_* \to f_!$ which can be expressed as either $f_!\epsilon \circ (\delta_{f_*})^{-1}: f_* \to f_!f^*f_* \to f_!$ or as $(\eta_{f_!})^{-1} \circ f_*\nu: f_* \to f_*f^*f_! \to f_!$. 
\end{lemma}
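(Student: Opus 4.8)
My reading is that the existence of $\alpha$ is immediate from either formula: each is a vertical composite of natural transformations, with the inverses $(\delta_{f_*})^{-1}$ and $(\eta_{f_!})^{-1}$ making sense precisely because $\delta$ and $\eta$ are isomorphisms. The substance of the statement is that the two composites agree, so write $\alpha_1 = f_!\epsilon \circ (\delta_{f_*})^{-1}$ and $\alpha_2 = (\eta_{f_!})^{-1} \circ f_*\nu$, both natural transformations $f_* \Rightarrow f_!$. My plan is to prove $\alpha_1 = \alpha_2$ after whiskering with $f^*$. Since $f$ is connected, $f^*$ is fully faithful, in particular faithful, so $f^*\beta = f^*\gamma$ forces $\beta = \gamma$ for transformations $\beta,\gamma : f_* \Rightarrow f_!$; hence it suffices to show $f^*\alpha_1 = f^*\alpha_2$ as transformations $f^*f_* \Rightarrow f^*f_!$.

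The engine of the argument is the two triangle identities, $f^*\delta \circ \nu_{f^*} = \id_{f^*}$ for $(f_! \dashv f^*)$ and $\epsilon_{f^*} \circ f^*\eta = \id_{f^*}$ for $(f^* \dashv f_*)$. Because $\delta$ and $\eta$ are isomorphisms, $f^*\delta$ and $f^*\eta$ are invertible, so these one-sided identities upgrade to genuine two-sided inverses $(f^*\delta)^{-1} = \nu_{f^*}$ and $(f^*\eta)^{-1} = \epsilon_{f^*}$. Applying $f^*$ to $\alpha_1$ and whiskering the first identity by $f_*$ turns $f^*\bigl((\delta_{f_*})^{-1}\bigr)$ into $\nu_{f^*f_*}$, giving $f^*\alpha_1 = f^*f_!\epsilon \circ \nu_{f^*f_*}$; applying $f^*$ to $\alpha_2$ and whiskering the second identity by $f_!$ turns $f^*\bigl((\eta_{f_!})^{-1}\bigr)$ into $\epsilon_{f^*f_!}$, giving $f^*\alpha_2 = \epsilon_{f^*f_!} \circ f^*f_*\nu$.

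Finally I would collapse both sides to a common transformation. Naturality of the unit $\nu : \id_{\Fcal} \Rightarrow f^*f_!$, read off at the components of the counit $\epsilon$, yields $f^*f_!\epsilon \circ \nu_{f^*f_*} = \nu \circ \epsilon$, while naturality of $\epsilon : f^*f_* \Rightarrow \id_{\Fcal}$ at the components of $\nu$ yields $\epsilon_{f^*f_!} \circ f^*f_*\nu = \nu \circ \epsilon$, where $\nu \circ \epsilon$ is the vertical composite $f^*f_* \xrightarrow{\epsilon} \id_{\Fcal} \xrightarrow{\nu} f^*f_!$. Thus $f^*\alpha_1 = \nu \circ \epsilon = f^*\alpha_2$, and faithfulness of $f^*$ gives $\alpha_1 = \alpha_2$. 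I expect the only real obstacle to be organisational: keeping the left/right whiskerings straight and, crucially, noticing that the one-sided triangle identities become two-sided inverses only once the isomorphism hypotheses on $\delta$ and $\eta$ are invoked; with that observation in hand, the two naturality squares do all the work.
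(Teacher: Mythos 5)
Your proposal is correct, and it is worth noting that the paper itself offers no proof of this lemma at all: it simply cites the more general treatment in Johnstone's work on punctual local connectedness (\cite[Corollary 2.2(a)]{PLC}). Your argument is therefore a self-contained verification of what the paper delegates to a reference, and every step checks out: the triangle identity $f^*\delta \circ \nu_{f^*} = \id_{f^*}$ does upgrade to $(f^*\delta)^{-1} = \nu_{f^*}$ once $\delta$ is invertible (and likewise $(f^*\eta)^{-1} = \epsilon_{f^*}$), the two naturality squares correctly collapse both whiskered composites to the common transformation $\nu \circ \epsilon : f^*f_* \Rightarrow f^*f_!$, and faithfulness of $f^*$ (which is exactly what connectedness supplies, and is equivalent to the stated invertibility of $\eta$ and $\delta$) lets you cancel the whiskering. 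A pleasant by-product of your argument, which you could make explicit, is a clean characterisation: $\alpha$ is the \emph{unique} natural transformation $f_* \to f_!$ satisfying $f^*\alpha = \nu \circ \epsilon$, since $f^*$ is fully faithful; this is essentially how the canonicity of $\alpha$ is understood in Johnstone's more general setting, and it also makes transparent the concrete description given after the lemma (a fixed point of an $M$-set is sent to the component containing it).
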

A more concrete description of this transformation $\alpha:\Gamma \to C$ for $\Setswith{M}$ is that it sends a fixed point of a right $M$-set $X$ to the connected component of $X$ containing it.

From the same paper, we obtain the following term.
\begin{dfn}
\label{dfn:PLC}
A connected, locally connected geometric morphism is said to be \textbf{punctually locally connected} if the natural transformation of Lemma \ref{lem:alpha} is epic. A connected, locally connected topos is called \textbf{punctually locally connected} if the global sections geometric morphism is punctually locally connected, which can be interpreted in the case of $\Setswith{M}$ as the statement `every component has at least one fixed point'. 
\end{dfn}

In \cite{lawvere-menni}, Lawvere and Menni call a geometric morphism \textbf{pre-cohesive} if it is local, hyperconnected and essential, such that $f_!$ preserves finite products; it was shown in \cite{PLC} that the global sections geometric morphism of a Grothendieck topos satisfies these properties if and only if it is punctually locally connected.

\begin{rmk}
Unlike many of the other properties in this paper, punctual local connectedness is not a geometric property: as shown in \cite[Proposition 1.6]{PLC}, it forces a connected, locally connected geometric morphism to be hyperconnected, which means that the only sober space $X$ with $\Sh(X)$ punctually locally connected is the one-point space. 
\end{rmk}

\begin{lemma}
\label{lem:rfixed}
Suppose $M$ is a monoid with a right absorbing element $r$. Then for any right $M$-set $A$, we have $\Gamma(A) = Ar$.
\end{lemma}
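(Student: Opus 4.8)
The plan is to unwind the definition of $\Gamma$ and then verify the equality $\Gamma(A) = Ar$ by a direct double inclusion of subsets of $A$. First I would recall that $\Gamma(A) = \Fix_M(A) = \HOM_M(1,A)$, and that a morphism $1 \to A$ of right $M$-sets is determined by the image $a \in A$ of the unique element of $1$; equivariance forces $a \cdot m = a$ for all $m \in M$. Hence $\Gamma(A)$ is canonically identified with the set of fixed points $\{a \in A : a \cdot m = a \text{ for all } m \in M\}$, and it suffices to show this set coincides with $Ar = \{a \cdot r : a \in A\}$.

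For the inclusion $Ar \subseteq \Gamma(A)$, I would take an arbitrary $a \in A$ and check that $a \cdot r$ is a fixed point: for any $m \in M$, associativity of the action gives $(a \cdot r)\cdot m = a \cdot (rm)$, and since $r$ is right absorbing we have $rm = r$, so $(a\cdot r)\cdot m = a \cdot r$. Thus every element of $Ar$ is fixed.

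For the reverse inclusion $\Gamma(A) \subseteq Ar$, I would take a fixed point $a \in A$, so that $a \cdot m = a$ for every $m \in M$. Specialising to $m = r$ yields $a = a \cdot r \in Ar$, which places $a$ in the image of multiplication by $r$.

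There is no genuine obstacle here: the result is an immediate consequence of the defining property $rm = r$ of a right absorbing element, and the only point requiring a moment's care is the translation between $\Gamma(A) = \HOM_M(1,A)$ and the concrete fixed-point set, which is already fixed by the notation $\Fix_M$ introduced earlier in the paper. Both inclusions are one-line verifications, so I would present them exactly as above without further machinery.
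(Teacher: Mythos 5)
Your proof is correct and matches the paper's argument exactly: both directions are the same double inclusion, using $rm=r$ to show elements of $Ar$ are fixed, and specialising a fixed point at $m=r$ for the converse. Your write-up is in fact slightly more careful than the paper's, which contains a small typo ($r \in R$) at this point.
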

\begin{proof}
Clearly any element of $A$ of the form $ar$ with $r \in R$ is fixed by the action of $M$. Conversely, if $a$ is fixed by the action of $M$ then $ar = a$ for any $r \in R$. Thus every fixed point is in $Ar$.
\end{proof}

\begin{thm}[Conditions for $\Setswith{M}$ to be local]
\label{thm:rabsorb}
Let $M$ be a monoid, $\Gamma:\Setswith{M} \to \Set$ the global sections functor, $C:\Setswith{M} \to \Set$ the connected components functor and $\alpha:\Gamma \to C$ the natural transformation of Lemma \ref{lem:alpha}. The following are equivalent:
\begin{enumerate}
	\item $M$ has a right absorbing element.
	\item $\Gamma$ preserves epimorphisms, i.e.\ the right $M$-set $1$ is projective.
	\item $\Gamma$ has a right adjoint, $\gamma$. That is, $\Setswith{M}$ is local over $\Set$.
	\item $\alpha:\Gamma \to C$ is epic. That is, $\Setswith{M}$ is punctually locally connected (equivalently, pre-cohesive). \label{item:rab6}
	\item $C$ is full. \label{item:rab7}
	\item $\Gamma$ reflects the initial object, i.e.\ every non-empty right $M$-set has a fixed point.
	\item The category of points of $\Setswith{M}$ has an initial object.
\end{enumerate}
\end{thm}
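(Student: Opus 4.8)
The plan is to take condition (6) --- every non-empty right $M$-set has a fixed point --- as the central hub through which the ``elementary'' conditions (1), (2), (4), (5) all pass, to slot in (3) using the pre-cohesion characterisation recalled just before Lemma~\ref{lem:rfixed}, and to handle (7) separately by passing to the description of points as flat left $M$-sets. Concretely I would prove $(1)\Leftrightarrow(6)$, $(2)\Leftrightarrow(6)$, $(4)\Leftrightarrow(6)$ and $(5)\Leftrightarrow(6)$, then attach (3) via the chain $(4)\Rightarrow(3)\Rightarrow(2)$, and finally prove $(1)\Leftrightarrow(7)$ on its own.

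For the hub equivalences I would argue as follows. For $(1)\Leftrightarrow(6)$: a right absorbing element is exactly a fixed point of $M$ viewed as a right $M$-set, so (6) applied to $M$ produces one, while conversely if $r$ is right absorbing then $ar$ is a fixed point of any non-empty $A$ (choose $a\in A$), as in Lemma~\ref{lem:rfixed}. For $(2)\Leftrightarrow(6)$ I would use the standard fact that $1$ is projective iff every epimorphism onto $1$ splits; since a map $A\to 1$ is epic exactly when $A$ is non-empty and a splitting $1\to A$ is exactly a fixed point of $A$, this is precisely (6). For $(4)\Leftrightarrow(6)$ I would use the concrete description of $\alpha$ following Lemma~\ref{lem:alpha}: $\alpha_A$ is surjective iff every connected component of $A$ contains a fixed point, and since components are themselves non-empty right $M$-sets this reduces to (6).

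For (3), the implication $(4)\Rightarrow(3)$ is essentially handed to us: condition (4) is punctual local connectedness, which the text recalls (following \cite{PLC}) coincides with pre-cohesion, and pre-cohesion entails localness. Conversely $(3)\Rightarrow(2)$ holds because a $\Gamma$ possessing a right adjoint is a left adjoint and hence preserves epimorphisms. For (5) I would prove $(5)\Rightarrow(4)$ by specialising fullness of $C$ to the pair $(1,A)$: since $C(1)\cong 1$, the comparison $\HOM_M(1,A)\to\HOM_{\Set}(C1,CA)$ is exactly $\alpha_A$, so fullness of $C$ forces $\alpha$ epic. For $(6)\Rightarrow(5)$, given $g:C(A)\to C(B)$ I would choose (using (6)) a fixed point in each component of $B$ and define $f:A\to B$ sending $a$ to the chosen fixed point of the component $g([a])$; this is $M$-equivariant precisely because the chosen values are fixed and depend only on the component of $a$, and clearly $C(f)=g$.

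The substantial part is (7). Passing through the equivalence between points of $\Setswith{M}$ and flat left $M$-sets (the corollary following Proposition~\ref{prop:flat-functor-categorically-flat}), condition (7) becomes the existence of an \emph{initial} flat left $M$-set. For $(1)\Rightarrow(7)$ I would show that the set $R$ of right absorbing elements, with its left $M$-action, is flat (a direct verification of the filtering conditions of Definition~\ref{dfn:filtering} using $rn=r$) and initial; the key observation is that for a flat $B$ and any $b,b'\in B$, filteredness supplies $c$ and $m,m'$ with $mc=b$, $m'c=b'$, whence $rb=rc=rb'$, so $r\cdot b$ is independent of $b$, and $r\mapsto r\cdot b$ is then the unique $M$-homomorphism $R\to B$. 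For $(7)\Rightarrow(1)$ I would let $\phi:B_0\to M$ be the unique map from the initial object to the flat left $M$-set $M$ (the canonical point) and exploit that every right multiplication $\rho_a:m\mapsto ma$ is an endomorphism of $M$ as a left $M$-set; uniqueness forces $\rho_a\circ\phi=\phi$ for all $a$, i.e.\ $\phi(b)a=\phi(b)$ for every $a$, so any $\phi(b)$ (which exists since $B_0$ is non-empty by flatness) is right absorbing. I expect condition (7) to be the main obstacle: both directions demand the translation to flat left $M$-sets together with a genuinely new idea --- the independence of $r\cdot b$ from $b$ yielding initiality of $R$, and the endomorphism-and-uniqueness trick extracting a right absorbing element --- whereas the equivalences among (1), (2), (4), (5), (6) are routine fixed-point bookkeeping and (3) follows almost formally from the cited pre-cohesion result.
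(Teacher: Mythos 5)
Your proposal is correct, but it is organised quite differently from the paper's proof, and the treatment of condition (7) is genuinely novel. The paper chains the elementary conditions as $(1 \Leftrightarrow 2)$ (via $1 = eM$ for an idempotent $e$), $(2 \Leftrightarrow 3)$ (via the Special Adjoint Functor Theorem), $(2 \Rightarrow 4)$, $(4 \Leftrightarrow 5)$, $(4 \Rightarrow 6)$, $(6 \Rightarrow 1)$, whereas you route everything through the fixed-point condition (6) and attach (3) by the chain $(4)\Rightarrow(3)\Rightarrow(2)$, using the cited equivalence of punctual local connectedness with pre-cohesion for the first implication and ``left adjoints preserve epimorphisms'' for the second; both decompositions are sound and of comparable weight, since the paper likewise leans on \cite{PLC}. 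The real divergence is (7): the paper obtains $(3 \Rightarrow 7)$ by citing Johnstone's general theorem that local toposes have initial points, and proves $(7 \Rightarrow 1)$ by showing the initial flat left $M$-set is a retract of $M$, hence of the form $Me$ for an idempotent $e$, and then analysing $\HOM_M(Me,M) \cong eM$. You instead prove $(1 \Leftrightarrow 7)$ directly: the set $R$ of right absorbing elements is a flat left $M$-set (your verification of the filtering conditions is correct; alternatively $R = Mr$ for any $r \in R$ with $r$ idempotent, so $R$ is even indecomposable projective), and the observation that $r \cdot b$ is independent of $b \in B$ for flat $B$ makes $R$ initial --- note that uniqueness of the homomorphism $R \to B$ still needs the one-line argument $\psi(r) = \psi(r \cdot r) = r \cdot \psi(r) = r \cdot b$, which uses idempotency of $r$, but this follows immediately from your stated key observation. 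Your converse, exploiting that every right multiplication $\rho_a$ is an endomorphism of $M$ as a left $M$-set so that initiality forces $\rho_a \circ \phi = \phi$, is slicker than the paper's retract argument. What each approach buys: the paper's citation of \cite[Theorem C3.6.1]{Ele} is shorter and situates the result in the general theory of local toposes, while your argument is self-contained and yields extra information, namely an explicit description of the initial point of $\Setswith{M}$ as the flat left $M$-set of right absorbing elements. Both proofs invoke the axiom of choice at the same spot (fullness of $C$), consistent with the paper's remark in the conclusion.
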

\begin{proof}
($1 \Leftrightarrow 2$) Since $1$ is indecomposable, it is projective if and only if there is an idempotent $e \in M$ such that $1 = eM$, but the latter equality holds if and only if $e$ is a right absorbing element.

($2 \Leftrightarrow 3$) $\Gamma = \HOM_M(1,-)$ has a right adjoint if and only if it preserves colimits, by the Special Adjoint Functor Theorem. This is the case if and only if $1$ is (indecomposable) projective.

($2 \Rightarrow 4$) Since this geometric morphism is hyperconnected and locally connected, it is punctually locally connected if (and only if) $\Gamma$ preserves epimorphisms, by \cite[Lemma 3.1(ii)]{PLC}. Alternatively, for $X$ a right $M$-set, observe that the unit $X \to \Delta C(X)$ is epic. If $\Gamma$ preserves epimorphisms, then in particular $\Gamma(X) \to \Gamma \Delta C(X) \cong C(X)$ is an epimorphism.

($4 \Leftrightarrow 5$)${}^*$ By the axiom of choice, any epimorphism in $\Set$ (in particular any component of $\alpha$) splits. Given a function $g:C(X) \to C(Y)$, we therefore obtain a morphism $X \to Y$ lifting $g$ by sending every $x \in X$ to the fixed point $\alpha_Y^{-1}\circ g([x])$, where $\alpha_Y^{-1}:C(Y) \to \Gamma(Y)$ is a splitting for $\alpha_Y$ and $[x]$ is the connected component of $X$ containing $x$, so $C$ is full. Conversely, by standard adjunction arguments, $C$ is full if and only if the unit $\nu$ of the adjunction $(C \dashv \Delta)$ is componentwise a split epimorphism. $C$ being full therefore makes each component of $\nu$, and hence of $\Gamma \nu$ and $\alpha = (\eta_{C})^{-1} \circ \Gamma\nu$ a split epimorphism.

($4 \Rightarrow 6$) Since $C$ reflects the initial object by inspection, $\alpha:\Gamma \to C$ being an epimorphism ensures that $\Gamma(X)$ is non-empty whenever $C(X)$ is.

($6 \Rightarrow 1$) Consider $M$ as a right $M$-set under multiplication. 

($3 \Rightarrow 7$) This holds for any topos by \cite[C3.6, Theorem 3.6.1]{Ele}.

($7 \Rightarrow 1$) Let $A$ be an initial object in the category of points of $\Setswith{M}$, i.e.\ in the category of flat left $M$-sets. Let $f : A \to M$ be the unique morphism to $M$. Note that flatness implies indecomposability, so in particular $A$ is non-empty. This means we can take a morphism $g: M \to A$. Because $A$ is initial, $gf$ is the identity, so $A$ is a retract of $M$, i.e.\ $A = Me$ for some idempotent $e \in M$. The morphisms of left $M$-sets $Me \to M$ correspond to the elements of $eM$, so if $A$ is initial, then $e$ is right absorbing.
\end{proof}

The equivalence between conditions $(1)$ and $(8)$ of Theorem \ref{thm:rabsorb} is shown by Bulman-Fleming and Laan in \cite[Corollary 3.6]{flatness}, and by Kilp \textit{et al.} in \cite[Proposition III.17.2(4)]{MAC}. As the proof suggests, several of the conditions are shown to be general consequences of one another in \cite{PLC}.

\begin{remark}
In the above theorem, we could replace $(3)$ with the statement $\Gamma$ preserves pushouts, or coequalizers, or reflexive coequalizers. Indeed, because $1$ is indecomposable, $\Gamma = \HOM_M(1,-)$ preserves coproducts, so if it preserves pushouts then it also preserves coequalizers (in particular reflexive coequalizers). Conversely, any epimorphism can be written as a reflexive coequalizer, so if $\Gamma$ preserves reflexive coequalizers then it preserves epimorphisms as well.
\end{remark}

\subsection{The Right Ore Condition, Preservation of Monomorphisms and de Morgan Toposes}

We can view the preceding sections as investigations of `projectivity' properties of the terminal right $M$-set, which generally corresponded to properties of $\Gamma$. We move on in this section to the examination of the `flatness' properties of terminal right $M$-set, corresponding to properties of $C$. The first and weakest of these is monomorphism-flatness of the terminal left $M$-set; one equivalent property to this is the dual of one from the last section:
\begin{dfn}
\label{dfn:coPLC}
Dualising Definition \ref{dfn:PLC}, we say a connected, locally connected geometric morphism $f$ is \textbf{co\-punctually locally connected} if the natural transformation $\alpha$ of Lemma \ref{lem:alpha} is a monomorphism. A Grothendieck topos is called \textbf{co\-punctually locally connected} if the global sections geometric morphism is copunctually locally connected, which can be interpreted in the case of $\Setswith{M}$ as `every component has at most one fixed point'.
\end{dfn}

\begin{example}
Like punctual local connectedness, copunctual local connectedness is not a geometric property. Suppose $X$ is a connected, locally connected sober space. Consider the global sections geometric morphism $f : \Sh(X) \to \Set$. Viewing the objects of $\Sh(X)$ as local homeomorphisms, the map $\alpha_E : f_*(E) \to f_!(E)$ sends each global section $s$ to the unique connected component of $E$ that contains the image $s(X)$. If $X$ has an open subset $U$ not equal to the empty set or all of $X$, we can construct a local homeomorphism $\pi:E \to X$ by taking $E$ to be the quotient of the disjoint union of two copies of $X$ which identifies the two copies of $U$, and take $\pi$ to be the natural projection map. This $E$ is connected since $X$ is, but has exactly $2$ global sections, so $\alpha_E$ fails to be monic. Thus $\Sh(X)$ is colocally punctually connected if and only if $X$ is the one point space.
\end{example}

Another equivalent property relates to the internal logic of $\Setswith{M}$.
\begin{dfn}
A topos $\Ecal$ is said to be \textbf{de Morgan} if its subobject classifier is de Morgan as an internal Heyting algebra. Equivalently, this says that for any subobject $A$ of an object $B$ of $\Ecal$, we have $(A \rightarrow 0) \cup ((A \rightarrow 0) \rightarrow 0) = B$ in the Heyting algebra of subobjects of $B$.
\end{dfn}

\begin{example}
Let $X$ be a topological space. Then from \cite[Example D4.6.3(b)]{Ele} we know that $\Sh(X)$ is de Morgan if and only if $X$ is \textbf{extremally disconnected}, i.e.\ the closure of every open subset of $X$ is again open.

The name `extremally disconnected' is a bit misleading, since the existence of a dense point in $X$ (see Proposition \ref{prop:veryconnected} below) makes $X$ both connected and extremally disconnected. An explanation for this confusion is that extremal disconnectedness was originally only defined by Gleason for Hausdorff spaces in \cite{gleason}. In that situation, extremal disconnectedness is strictly stronger than total disconnectedness.

More generally, if $X = \bigsqcup_{i \in I} X_i$ is a disjoint union of a family $\{X_i\}_{i \in I}$ of irreducible topological spaces in the sense to be defined in Proposition \ref{prop:veryconnected}, then it follows that $X$ is extremally disconnected. If $X$ is a variety (with the Zariski topology), then $X$ is extremally disconnected if and only if each of its connected components is irreducible.
\end{example}

\begin{dfn}
\label{dfn:rOre}
A monoid $M$ is said to satisfy the \textbf{right Ore condition} or is \textit{left reversible} if for every $m_1,m_2 \in M$ there exists $m_1',m_2' \in M$ with $m_1m_1'=m_2m_2'$, or equivalently $m_1M \cap m_2M \neq \emptyset$. We employ the former terminology, as used by Johnstone in \cite[Example A2.1.11]{Ele}; the latter is employed by Sedaghatjoo and Khaksari in \cite{sedaghatjoo-khaksari} and by Kilp \textit{et al.} in \cite{MAC}.
\end{dfn}

\begin{thm}[Conditions for $\Setswith{M}$ to be de Morgan]
\label{thm:deMorgan}
Let $M$ be a monoid, $\Omega$ the subobject classifier of $\Setswith{M}$, $\Gamma, C: \Setswith{M} \to \Set$ the usual functors and $\alpha:\Gamma \to C$ the natural transformation from Lemma \ref{lem:alpha}. The following are equivalent:
\begin{enumerate}
	\item $C$ preserves monomorphisms, so the terminal left $M$-set is monomorphism-flat.
	\item Any two non-empty sub-$M$-sets of an indecomposable right $M$-set have non-empty intersection. \label{item:deMorgan-7}
	\item $M$ satisfies the right Ore condition.
	\item Any sub-$M$-set of an indecomposable right $M$-set is either empty or indecomposable. 
	\item $C(\Omega)$ is a two element set (combined with condition $1$, this means that $C$ preserves the subobject classifier). \label{item:deMorgan-5}
	\item $\Setswith{M}$ is de Morgan.
	\item $\alpha:\Gamma \to C$ is a monomorphism; that is, $\Setswith{M}$ is copunctually locally connected.
\end{enumerate}
\end{thm}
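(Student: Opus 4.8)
The plan is to take the right Ore condition (3) as the logical hub, proving that each of the other six conditions is equivalent to it. The single most useful tool will be the following observation, which I would isolate as a preliminary lemma: \emph{if $M$ satisfies the right Ore condition, then for any right $M$-set $X$ and any $x,y \in X$, the elements $x$ and $y$ lie in the same connected component of $X$ if and only if $xM \cap yM \neq \varnothing$.} To prove this I would show that the relation defined by $x \approx y :\Leftrightarrow xM \cap yM \neq \varnothing$ is an equivalence relation: reflexivity and symmetry are immediate, and transitivity is exactly where right Ore is used, since from $xm_1 = ym_2$ and $ym_3 = zm_4$, choosing $s,t$ with $m_2 s = m_3 t$ yields $x(m_1 s) = z(m_4 t)$. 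As $\approx$ visibly contains the generating relation $x \sim xm$ and is contained in the component relation, it must coincide with it.

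With this lemma in hand the combinatorial conditions fall out quickly. The equivalence (2) $\Leftrightarrow$ (4) is a pure lattice argument (apply indecomposability to $A \sqcup B$ in one direction and to a decomposition of $A$ in the other), needing no hypothesis on $M$. For (3) $\Rightarrow$ (2) I would take $a \in A$ and $b \in B$ inside an indecomposable $X$; the lemma gives $am_1 = bm_2 \in A \cap B$. For (2) $\Rightarrow$ (3) I specialise to $X = M$ with $A = m_1 M$ and $B = m_2 M$, recovering $m_1 M \cap m_2 M \neq \varnothing$. For (4) $\Rightarrow$ (1) I decompose the codomain of a monomorphism into its components and intersect, using (4) to see that each non-empty intersection is a single component, so that $C$ of the inclusion is injective. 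Finally (1) $\Rightarrow$ (3) is the one counterexample I build by hand: if $m_1 M \cap m_2 M = \varnothing$ then $m_1 M \sqcup m_2 M \hookrightarrow M$ is a monomorphism whose two-component source has connected codomain, so $C$ does not preserve it.

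For (5) and (6) I would compute directly. The key unconditional fact is that every non-empty right ideal $I$ satisfies $I \cdot m_0 = M$ for any $m_0 \in I$, so every non-empty ideal lies in the component of the top element $M$; thus $C(\Omega)$ has two elements precisely when the fixed point $\varnothing$ is isolated, and since $I \cdot m = \varnothing \Leftrightarrow mM \cap I = \varnothing$, the point $\varnothing$ fails to be isolated exactly when right Ore fails, giving (3) $\Leftrightarrow$ (5). For (6) I would record the formulas $\neg A = \{x : xM \cap A = \varnothing\}$ and $\neg\neg A = \{x : \forall m,\ xmM \cap A \neq \varnothing\}$ for a sub-$M$-set $A \subseteq X$, so that the de Morgan law $\neg A \cup \neg\neg A = X$ says that whenever $xM$ meets $A$, so does every $xmM$; right Ore makes this automatic, and its failure is witnessed by $X = M$, $A = m_2 M$, $x = 1$, $m = m_1$.

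For (7) the lemma again does the work in the direction (3) $\Rightarrow$ (7): two fixed points $p,q$ have $pM = \{p\}$ and $qM = \{q\}$, so lying in a common component forces $p = q$. Conversely, when right Ore fails the two distinct fixed points $\varnothing$ and $M$ of $\Omega$ lie in the same component by the computation above, so $\alpha_\Omega$ is not monic. I expect the main obstacle to be organisational rather than deep: the genuinely load-bearing step is the preliminary lemma identifying the component relation under right Ore, and the one part requiring care with the internal logic is (6), where one must be comfortable computing the Heyting negation in the subobject lattices of $\Setswith{M}$; everything else reduces to the explicit bookkeeping of ideals and orbits sketched above.
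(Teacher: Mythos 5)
Your proposal is correct --- every implication checks out --- but it is organised differently from the paper's proof and is genuinely more self-contained in two places. The paper proves a web of implications ($1 \Rightarrow 2 \Rightarrow 3 \Leftrightarrow 4 \Rightarrow 1$, plus $3 \Leftrightarrow 5$, $3 \Leftrightarrow 6$, $1 \Rightarrow 7 \Rightarrow 3$), whereas you make (3) the hub and, crucially, promote to an explicit lemma what the paper leaves as an informal remark inside its proof of ($3 \Leftrightarrow 4$): under the right Ore condition the connected-component relation on any right $M$-set coincides with the relation $xM \cap yM \neq \varnothing$ (the paper's ``push out the zigzag'' argument). This lemma then does triple duty for you in ($3 \Rightarrow 2$), ($3 \Rightarrow 4$) and ($3 \Rightarrow 7$). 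The two substantive departures are: first, for condition (6) the paper simply cites Johnstone's result that $\PSh(\Ccal)$ is de Morgan iff $\Ccal$ is right Ore, while you verify the de Morgan law directly from the explicit formulas $\neg A = \{x : xM \cap A = \varnothing\}$ and $\neg\neg A = \{x : \forall m,\ xmM \cap A \neq \varnothing\}$, which makes the equivalence elementary and self-contained; second, for ($7 \Rightarrow 3$) the paper builds a bespoke right congruence on $M$ whose quotient has two fixed points, whereas you recycle the subobject classifier itself, noting that when Ore fails $\Omega$ is connected yet has the two distinct fixed points $\varnothing$ and $M$, so $\alpha_\Omega$ is not monic --- a slicker argument that also ties (5) and (7) together. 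What the paper's route buys in exchange is generality: its proof of ($1 \Rightarrow 7$) via monicity of the counit of a hyperconnected morphism works for any hyperconnected, locally connected geometric morphism, while your route through the lemma is specific to $\Setswith{M}$; and its citation for (6) keeps the proof short. Your version would be the better choice for a reader who wants the whole theorem verified by hand inside the category of $M$-sets.
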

\begin{proof}
($1 \Rightarrow 2$) Let $A$ be indecomposable and let $A_1,A_2 \subseteq A$ be two non-empty sub-$M$-sets of $A$. Since $C(A_1\cup A_2) \hookrightarrow C(A)=1$ (and the first expression has at least one element), the two subsets must intersect.

($2 \Rightarrow 3$) Applying ($2$) to the principal $M$-sets generated by elements $m_1,m_2 \in M$ gives the right Ore condition.

($3 \Leftrightarrow 4$) Informally, given a finite zigzag connecting elements $a_1,a_2$ in an indecomposable right $M$-set $A$, the right Ore condition allows us to inductively `push out' the spans of this zigzag to obtain elements $m_1,m_2$ with $a_1\cdot m_1 = a_2 \cdot m_2$, so any sub-$M$-set containing $a_1$ intersects any containing $a_2$; it follows that any non-empty sub-$M$-set is indecomposable. Conversely, the union of a pair of principal ideals in $M$ being indecomposable means they must intersect, else we could not construct a connecting zigzag.

($4 \Rightarrow 1$) A subobject of an $M$-set $A$ is a coproduct of subobjects of the indecomposable components of $A$. In particular, ($4$) ensures that the image under $C$ of an inclusion of sub-$M$-sets is monic.

($3 \Leftrightarrow 5$) The subobject classifier $\Omega$ of $\Setswith{M}$ is the collection of right ideals of $M$. Given a non-empty ideal $I$ of $M$ and $m \in I$, we have $I \cdot m = M$, so the connected component of $\Omega$ containing $M$ also contains all of the non-empty ideals. Since $\emptyset$ is a fixed point of $\Omega$, $C(\Omega)$ has two elements if and only if $I \cdot m$ is non-empty whenever $I$ is for every $m \in M$ (otherwise $C(\Omega)$ is a singleton). Since the action of $M$ on $\Omega$ is order-preserving, it is necessary and sufficient that this is true for the principal ideals, and $m_1M \cdot m \neq \emptyset$ if and only if $m_1M \cap mM \neq \emptyset$.

($3 \Leftrightarrow 6$) By \cite[Example D4.6.3(a)]{Ele}, the topos of presheaves on a category $\Ccal$ is de Morgan if and only if $\Ccal$ satisfies the right Ore condition, which reduces to Definition \ref{dfn:rOre} when $\Ccal$ has a single object. 

($1 \Rightarrow 7$) Since the counit of a hyperconnected geometric morphism is monic \cite[Proposition A4.6.6]{Ele}, we have $\alpha_X = C\epsilon_X \circ \left(\delta_{\Gamma(X)}\right)^{-1}$ is monic when $C$ preserves monomorphisms.

($7 \Rightarrow 3$) Let $m_1,m_2 \in M$. Generate a right congruence of $M$ from the relations $m_1n \sim m_1n'$ and $m_2n \sim m_2n'$ for all $n,n' \in M$. The quotient of $M$ by this congruence is indecomposable, so has at most one fixed point since $\alpha$ is monic; in particular, the equivalence classes represented by $m_1$ and $m_2$ are fixed and so must be equal. That is, $m_1M \cap m_2M \neq \emptyset$.
\end{proof}
The equivalences ($1 \Leftrightarrow 3$) and a partial form of ($3 \Leftrightarrow 4$) appear respectively as \cite[Exercise 12.2(2) and Exercise 11.2(2)]{MAC}.

\begin{remark}
\label{rmk:sufficiently}
Following \cite{lawvere-menni}, an object $X$ of a topos $\Ecal$ is called \textbf{contractible} if $X^A$ is connected for all objects $A$ of $\Ecal$. Lawvere and Menni say a pre-cohesive topos is \textbf{sufficiently cohesive} if every object can be embedded in a contractible object. Equivalently, a pre-cohesive topos is sufficiently cohesive if and only if the subobject classifier is connected, see \cite[Proposition 4]{cohesion}. As we observed in the proof of ($3 \Leftrightarrow 5$) above, the subobject classifier of $\Setswith{M}$ is connected if and only if $M$ \textit{does not} satisfy the right Ore condition. It follows easily that $\Setswith{M}$ is sufficiently cohesive if and only if $M$ has at leas two right-absorbing elements.
\end{remark}

Since any group satisfies the right Ore condition, the equivalent properties of Theorem \ref{thm:deMorgan} are satisfied when $M$ is a group; we shall see further special cases while investigating stronger properties in subsequent sections. Since every monomorphism in a topos is regular, $C$ preserving equalizers implies $C$ preserves monomorphisms. However, preserving equalizers is a strictly stronger condition:
\begin{example}[$C$ can preserve monomorphisms but not equalizers]
Consider the commutative monoid $M = \mathbb{N}$ of natural numbers under addition. Clearly, $\mathbb{N}$ satisfies the right Ore condition, so $\Setswith{\mathbb{N}}$ is de Morgan. We prove that the functor $C$ does not preserve equalizers in this case. Consider the diagram of right $\mathbb{N}$-sets
\begin{equation} \label{eq:example-dm-not-tc}
\begin{tikzcd}
\mathbb{N} \ar[r,shift left=1,"{\id}"] \ar[r,shift right=1,"{s}"'] & \mathbb{N},
\end{tikzcd}
\end{equation}
with $s$ the successor map, i.e.\ $s(n) = n+1$. Then the equalizer of this diagram is empty. But applying the connected components functor to (\ref{eq:example-dm-not-tc}) gives
\begin{equation*}
\begin{tikzcd}
\{\ast \} \ar[r,shift left=1,"{C(\id)}"] \ar[r,shift right=1,"{C(s)}"'] & \{\ast \}
\end{tikzcd}
\end{equation*}
with $C(\id)$ and $C(s)$ both the identity map. So the equalizer of this diagram is $\{\ast\}$, which does not agree with $C(\varnothing) = \varnothing$.
\end{example}

We shall see in Theorem \ref{thm:totally} the extent to which preservation of equalizers is stronger than preservation of monomorphisms for $C$. We first examine the conditions under which $C$ preserves finite products.

\subsection{Spans and Strong Connectedness}
\label{ssec:strong-connectedness}

\begin{dfn}
A geometric morphism $f:\Fcal \to \Ecal$ is called \textbf{strongly connected} if it is locally connected and its left adjoint $f_!$ preserves finite products.
\end{dfn}

If $f$ is strongly connected, then in particular $f_!(1)=1$, so $f$ is connected. This means that $f$ is strongly connected if and only if it is connected, locally connected, such that $f_!$ preserves binary products.

One justification for this name is a geometric one, but as we shall see in Proposition \ref{prop:veryconnected}, it coincides with a stronger property, called total connectedness, for toposes of the form $\Sh(X)$. Therefore we give some justification in terms of presheaf toposes in Example \ref{xmpl:strongconnect}.

Let $\Dcal$ be a small category. Recall that colimits of shape $\Dcal$ commute with finite products in $\Set$ if and only if $\Dcal$ is a \textbf{sifted category}; we refer the reader to the survey on sifted colimits \cite{Sifted} for background on these. Concretely, siftedness may be expressed as the requirement that for each pair of objects $D_1,D_2$ in $\Dcal$, the category $\mathrm{Cospan}(D_1,D_2)$ of cospans on this pair of objects is non-empty and connected. More abstractly, siftedness may be characterised by the diagonal functor $\Dcal \to \Dcal \times \Dcal$ being a final functor.

\begin{example}
\label{xmpl:strongconnect}
The presheaf topos $\Setswith{\Dcal}$ is always locally connected. The left adjoint $f_!$ in its global sections geometric morphism sends $X:\Dcal\op \to \Set$ to its colimit. It follows by considering the terminal object of $\Setswith{\Dcal}$ that $\Setswith{\Dcal}$ is connected if and only if $\Dcal$ is connected as a category, and that $f_!$ preserves finite products if and only if $\Dcal\op$ is sifted, which is to say that the categories of spans $\mathrm{Span}(D_1,D_2)$ with $D_1,D_2 \in \Dcal$ are connected too.
\end{example}

Applying this to the case where $\Dcal$ has just one object, we arrive at the following:
\begin{proposition}[Conditions for $\Setswith{M}$ to be strongly connected]
\label{prop:strongcon}
Let $M$ be a monoid and $C:\Setswith{M} \to \Set$ the connected components functor. The following are equivalent:
\begin{enumerate}
	\item $C$ preserves finite products, i.e.\ $\Setswith{M}$ is strongly connected over $\Set$, or the terminal left $M$-set is finitely product-flat.
	\item $C$ preserves binary products.
	\item The diagonal monoid homomorphism $D: M \to M \times M$ is initial as a functor.
	\item The category $\mathrm{Span}(M)$ of spans on $M$ is connected.
	\item Any product of two indecomposable $M$-sets is indecomposable.
	\item As a right $M$-set, $M \times M$ is indecomposable. \label{item:MxMindec}
\end{enumerate}
\end{proposition}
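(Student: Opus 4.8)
The plan is to treat this as the one-object specialisation of Example \ref{xmpl:strongconnect}, supplemented by two elementary translations, organised around the cycle $(2)\Rightarrow(5)\Rightarrow(6)\Leftrightarrow(4)\Leftrightarrow(3)\Leftrightarrow(2)$ together with the separate equivalence $(1)\Leftrightarrow(2)$. I would dispatch $(1)\Leftrightarrow(2)$ first and cheaply: the terminal $M$-set $1$ is indecomposable, so $C(1)=1$ by Proposition \ref{prop:connected-objects}, meaning $C$ always preserves the terminal object. Since a functor preserves all finite products as soon as it preserves binary products and the terminal object, preservation of finite products and of binary products coincide for $C$.

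Next, the block $(2)\Leftrightarrow(3)\Leftrightarrow(4)$ is precisely Example \ref{xmpl:strongconnect} applied to $\Dcal=M$. There $C=f_!$ is the colimit over $M\op$, and $f_!$ preserves finite products exactly when $M\op$ is sifted. By the abstract characterisation, siftedness of $M\op$ is finality of the diagonal $M\op\to M\op\times M\op$, which dualises to initiality of $D:M\to M\times M$, namely condition $(3)$; its concrete form is that the cospan categories of $M\op$, equivalently the category $\mathrm{Span}(M)$, are connected, namely condition $(4)$. So these three are genuinely the same statement read off the example, with non-emptiness of $\mathrm{Span}(M)$ automatic from the identity span.

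I would then close the loop with two hands-on steps. For $(2)\Rightarrow(5)$: if $X,Y$ are indecomposable then $C(X)=C(Y)=1$, so when $C$ preserves binary products $C(X\times Y)\cong C(X)\times C(Y)=1$, whence $X\times Y$ is indecomposable by Proposition \ref{prop:connected-objects} again. For $(5)\Rightarrow(6)$: the right $M$-set $M$ is itself indecomposable, since the orbit of $1$ is all of $M$, so applying $(5)$ with $X=Y=M$ yields $(6)$. Finally $(6)\Leftrightarrow(4)$ is a direct identification: the objects of $\mathrm{Span}(M)$ are the pairs $(s,t)\in M\times M$, and a span morphism $(s,t)\to(s',t')$ is an element $u\in M$ with $s=s'u$ and $t=t'u$, i.e.\ with $(s,t)=(s',t')\cdot u$. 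Hence the morphisms of $\mathrm{Span}(M)$ generate on its object set exactly the equivalence relation generated by $p\sim p\cdot u$ on the right $M$-set $M\times M$, so the connected components of the category $\mathrm{Span}(M)$ coincide with those of $M\times M$; connectedness of the former is then literally indecomposability of the latter. This completes the cycle $(2)\Rightarrow(5)\Rightarrow(6)\Leftrightarrow(4)\Leftrightarrow(3)\Leftrightarrow(2)$, and with $(1)\Leftrightarrow(2)$ all seven conditions are equivalent.

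The main obstacle I anticipate is not a hard computation but keeping the variance straight: correctly matching siftedness of $M\op$ with initiality of the diagonal on $M$ (so that $(3)$ is read off the example with the right handedness), and verifying that the morphisms of $\mathrm{Span}(M)$ really do generate the same equivalence relation on $M\times M$ as the right action $x\mapsto x\cdot u$, so that conditions $(4)$ and $(6)$ align on the nose rather than merely up to taking opposites.
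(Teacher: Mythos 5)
Your proposal is correct and takes essentially the same route as the paper's proof: the paper likewise disposes of the equivalence of (1)--(4) by appeal to the siftedness discussion and Example \ref{xmpl:strongconnect}, proves $(2) \Rightarrow (5) \Rightarrow (6)$ by the same computation $C(X \times Y) \cong C(X) \times C(Y) \cong 1$ with $X = Y = M$ as the special case, and closes the cycle exactly as you do, by identifying morphisms of spans with the diagonal right action of $M$ on $M \times M$. Your explicit treatment of $(1) \Leftrightarrow (2)$ via $C(1) = 1$ and of the variance in $(3)$ merely spells out what the paper leaves as ``immediate from the preceding discussion.''
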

\begin{proof}
The equivalence of $(1)$, $(2)$, $(3)$ and $(4)$ is immediate from the preceding discussion.

($2 \Rightarrow 5 \Rightarrow 6$) Given indecomposable $M$-sets $X$ and $Y$, we have $C(X \times Y) \cong C(X) \times C(Y) \cong 1$, so $X \times Y$ is indecomposable; taking $X = Y = M$ gives ($6$) as a special case of this.

($6 \Rightarrow 4$) A morphism of spans $m: (x,y) \to (xm,ym)$ is, by inspection, the action by right multiplication of $m \in M$ on the corresponding element $(x,y) \in M \times M$.
\end{proof}

\begin{example}
\label{xmpl:groupnotstrong}
Suppose $M$ is a non-trivial group. Then $C$ fails to preserve products since $M \times M$ decomposes into orbits indexed by the elements of $M$. This can also be deduced from Proposition \ref{prop:totally2} below combined with the result \cite[Scholium A2.3.9]{Ele} that a logical functor with a left adjoint which preserves finite limits must be an equivalence.
\end{example}

\begin{lemma}
\label{lem:localstrong}
Suppose $M$ is a monoid with a right absorbing element. Then $M \times M$ is indecomposable as a right $M$-set.
\end{lemma}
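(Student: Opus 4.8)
The plan is to prove the indecomposability of $M \times M$ directly. Viewing $M \times M$ as a right $M$-set under the diagonal action $(x,y)\cdot s = (xs, ys)$, by the equivalence $(1)\Leftrightarrow(4)$ of Proposition~\ref{prop:connected-objects} it suffices to show that this $M$-set is connected, i.e.\ that every element is linked to a single fixed base point by a finite zigzag of right-multiplications. I will take the base point to be $(r,r)$, where $r$ denotes the given right absorbing element.

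The only algebraic input is a handful of collapse identities flowing from right absorption. Since $rn = r$ for all $n$, every element of the form $ar \in Mr$ is again right absorbing: $(ar)n = a(rn) = ar$. Consequently, writing $u = xr$ and $v = yr$, I have $un = u$ and $vn = v$ for all $n$, and in particular $uv = u$, $ur = u$, $ru = r$ and $rr = r$. These are exactly the relations that let me ``reset'' one coordinate of a pair while leaving the other alone.

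First I would collapse an arbitrary $(x,y)$ into the right-absorbing part by right-multiplying by $r$, obtaining $(x,y)\cdot r = (u,v)$ with $u = xr$, $v = yr$. Then I would exhibit the explicit zigzag
\[
(u,v) \;\xleftarrow{\;\cdot v\;}\; (u,1) \;\xrightarrow{\;\cdot r\;}\; (u,r) \;\xleftarrow{\;\cdot u\;}\; (1,r) \;\xrightarrow{\;\cdot r\;}\; (r,r),
\]
where each arrow is an instance of the diagonal action, justified by one of the identities above: $(u,1)\cdot v = (uv, v) = (u,v)$, $(u,1)\cdot r = (ur, r) = (u,r)$, $(1,r)\cdot u = (u, ru) = (u,r)$, and $(1,r)\cdot r = (r, rr) = (r,r)$. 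Composing with the initial step $(x,y)\sim(u,v)$ links $(x,y)$ to $(r,r)$; since $(x,y)$ was arbitrary, $M\times M$ has a single component and is therefore indecomposable.

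I expect the main obstacle to be conceptual rather than computational: the naive attempt to sweep out the component by iterating the forward action from $(1,1)$ fails, because the action is diagonal and cannot adjust the two coordinates independently — the pairs $(u,v)$ with $u,v \in Mr$ are stationary under every forward step and can only be reached by running some of the multiplications backwards. The key insight is that right absorption supplies precisely the asymmetric collapses $ru = r$ and $uv = u$ that allow a right-absorbing value to be slid into a coordinate from a non-absorbing neighbour; once the correct insertion points are identified, checking the zigzag is routine. (Alternatively, one could bypass the computation entirely: by Theorem~\ref{thm:rabsorb} a right absorbing element makes $\Setswith{M}$ punctually locally connected, hence pre-cohesive, so in particular $C$ preserves finite products, which is the indecomposability of $M\times M$. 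I would nonetheless prefer the self-contained argument above, since it keeps the lemma independent of the heavier machinery of \cite{PLC}.)
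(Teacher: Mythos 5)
Your proof is correct and follows essentially the same route as the paper: an explicit zigzag in $M \times M$ exploiting the fact that every element of $Mr$ is right absorbing, so that one coordinate can be reset while the other is held fixed (the paper's chain runs $(x,y) \to (xr,yr) \leftarrow (1,yr) \to (yr,yr) \leftarrow (1,1)$, yours is a slightly longer variant ending at $(r,r)$ instead of $(1,1)$). Even your parenthetical alternative via punctual local connectedness matches the paper, which opens its proof by citing exactly that argument from \cite[Proposition 2.7]{PLC}.
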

\begin{proof}
At the level of toposes, one can prove that any punctually locally connected geometric morphism $f$ has leftmost adjoint $f_!$ preserving products, as Johnstone does in \cite[Proposition 2.7]{PLC}. More concretely, let $r$ be a right absorbing element and $(x,y) \in M \times M$. Then $(x,y)r = (xr,yr) = (1,yr)xr$ and $(1,1)yr = (yr,yr) = (1,yr)yr$, which gives a zigzag connecting $(x,y)$ and $(1,1)$, making $M \times M$ indecomposable as required.
\end{proof}

\subsection{Preserving Exponentials}
\label{ssec:exponential}

In the context of $\Setswith{M}$ being a strongly connected topos, it makes sense to ask under which extra conditions $C$ is cartesian-closed; we explore this and some related conditions now for the sake of curiosity. First, we recall from \eqref{eq:exp1} in Section \ref{ssec:toposes} that for right $M$-sets $P$ and $Q$ we have $Q^P = \HOM_M(M \times P,Q)$ in $\Setswith{M}$, with $m \in M$ acting by left multiplication in the first argument. The comparison morphism $\theta_{P,Q}$ for $C$ sends the connected component $[g] \in C(Q^P)$ to the function sending a component $[p] \in C(P)$ to the component $[g(1,p)] \in C(Q)$, where $g:M \times P \to Q$, $p \in P$ and $g(1,p) \in Q$ are representative elements of their respective components.

For a general geometric morphism $f:\Fcal \to \Ecal$ to be locally connected, the inverse image functor not only needs a left adjoint $f_!$, but this adjoint must be \textbf{$\Ecal$-indexed}, which can be paraphrased as the condition that transposition from $\Fcal$ to $\Ecal$ across the adjunction $(f_! \dashv f^*)$ should preserve pullback squares of a suitable form. We refer the reader to \cite[Definition 1.2.1]{Cover} for a more precise statement of this, but we will only use the following fact.
\begin{fact}
Let $f:\Fcal \to \Ecal$ be a connected, locally connected geometric morphism. Then for every $Y$ in $\Fcal$, $X$ in $\Ecal$ we have:
\[f_!(Y \times f^*(X)) \cong f_!(Y) \times X \cong f_!(Y) \times f_!f^*(X),\]
naturally in $X$ and $Y$. Thus even when $f$ is not strongly connected, we can construct the canonical morphisms $\theta_{f^*(X),Y}: f_!(Y^{f^*(X)}) \to f_!(Y)^{f_!f^*(X)}$.
\end{fact}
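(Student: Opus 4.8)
The plan is to treat the two displayed isomorphisms separately and then assemble the comparison map $\theta$ from them. The second isomorphism, $f_!(Y) \times X \cong f_!(Y) \times f_!f^*(X)$, is essentially free: since $f$ is connected and (being locally connected) essential, the counit $\delta$ of $(f_! \dashv f^*)$ is an isomorphism, as recorded in Lemma \ref{lem:alpha}, so $\delta_X : f_!f^*(X) \to X$ exhibits $X \cong f_!f^*(X)$ naturally in $X$, and multiplying by $f_!(Y)$ gives the claim. All the real content therefore lies in the first isomorphism, which is the projection (Frobenius) formula for the locally connected morphism $f$.

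For the first isomorphism I would first write down the canonical comparison map $\phi_{X,Y} : f_!(Y \times f^*(X)) \to f_!(Y) \times X$, whose two components are $f_!$ applied to the two projections of $Y \times f^*(X)$, the second composed with $\delta_X : f_!f^*(X) \to X$. To prove $\phi_{X,Y}$ is invertible I would argue by Yoneda, showing that the two sides represent the same functor $\Hom_{\Ecal}(-,Z)$. On the domain side, the adjunction $(f_! \dashv f^*)$ gives
\[\Hom_{\Ecal}(f_!(Y \times f^*(X)), Z) \cong \Hom_{\Fcal}(Y \times f^*(X), f^*(Z)).\]
On the codomain side, cartesian closedness of the topos $\Ecal$, then $(f_! \dashv f^*)$, then the fact that $f^*$ preserves exponentials (which holds because $f$ is locally connected, by \cite[Proposition C3.3.1]{Ele}), give
\[\Hom_{\Ecal}(f_!(Y)\times X, Z) \cong \Hom_{\Ecal}(f_!(Y), Z^X) \cong \Hom_{\Fcal}(Y, f^*(Z)^{f^*(X)}),\]
and a final use of cartesian closedness in $\Fcal$ rewrites the last set as $\Hom_{\Fcal}(Y \times f^*(X), f^*(Z))$. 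I expect the main obstacle to be the bookkeeping step verifying that this chain of natural isomorphisms is precisely the one induced by precomposition with $\phi_{X,Y}$, rather than some other abstract isomorphism; this is settled by tracing an identity morphism through the transpositions, after which Yoneda gives that $\phi_{X,Y}$ is an isomorphism. Naturality in $X$ and $Y$ is then inherited from that of the constituent adjunction isomorphisms and of $\delta$.

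Finally, to build $\theta_{f^*(X),Y}$ I would imitate the general construction of the comparison morphism in \eqref{eq:theta}, noting that it only requires $f_!$ to turn the product appearing in the domain of the evaluation map into an honest product in $\Ecal$, which is exactly what the first isomorphism supplies. Concretely, I would apply $f_!$ to the evaluation map $\mathrm{ev} : Y^{f^*(X)} \times f^*(X) \to Y$ of \eqref{eq:evaluation-map} in $\Fcal$, use the first isomorphism (with $Y^{f^*(X)}$ in place of $Y$) to identify its domain with $f_!(Y^{f^*(X)}) \times X \cong f_!(Y^{f^*(X)}) \times f_!f^*(X)$, and then transpose the resulting morphism $f_!(Y^{f^*(X)}) \times f_!f^*(X) \to f_!(Y)$ across the product-exponential adjunction \eqref{eq:exponential-adjunction} of $\Ecal$ to obtain $\theta_{f^*(X),Y} : f_!(Y^{f^*(X)}) \to f_!(Y)^{f_!f^*(X)}$.
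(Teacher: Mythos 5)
Your proof is correct, and it is worth noting that it supplies an argument where the paper gives essentially none: the paper states this Fact as an immediate consequence of the definition of local connectedness, namely that $f_!$ is an $\Ecal$-indexed left adjoint (deferring to \cite{Cover} for the precise formulation), from which the projection formula $f_!(Y \times f^*(X)) \cong f_!(Y) \times X$ is part of the package, with the second isomorphism coming from connectedness via the counit $\delta$ of $(f_! \dashv f^*)$ exactly as in Lemma \ref{lem:alpha}. You instead derive the projection formula from cartesian-closedness of $f^*$ (\cite[Proposition C3.3.1]{Ele}, which the paper has already invoked) by a Yoneda argument; this exploits the other side of a standard equivalence, recorded in Part A of \cite{Ele}, between Frobenius reciprocity for a left adjoint and preservation of exponentials by its right adjoint, so the two routes are logically interchangeable. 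What your route buys is self-containedness: every ingredient ($\delta$ an isomorphism, $f^*$ cartesian closed, the exponential adjunctions \eqref{eq:exponential-adjunction}) is already on the table in the paper, and no appeal to indexed-category machinery is needed; what the paper's route buys is brevity and the stronger structural statement (indexedness) that is anyway part of the definition being used. The one delicate point, which you correctly flag, is checking that your chain of natural isomorphisms is induced by the canonical comparison map $\phi_{X,Y}$ whose components are $f_!(\pi_1)$ and $\delta_X \circ f_!(\pi_2)$; since each link in the chain is a transposition compatible with the projections, tracing the identity through does confirm this, and in any case the construction of $\theta_{f^*(X),Y}$ at the end — applying $f_!$ to the evaluation map \eqref{eq:evaluation-map} and transposing, exactly as in \eqref{eq:theta} — only requires some natural isomorphism of the stated form, so your proof of the Fact stands either way.
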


\begin{dfn} \label{dfn:powers}
Let $f:\Fcal \to \Ecal$ be an essential geometric morphism satisfying $f_!(1) = 1$. Observe that since colimits are stable under pullback in a topos, we have $Y \times \coprod_{i=1}^n 1 = \coprod_{i=1}^n Y$ for any object $Y$, so that products of this form are preserved by $f_!$. Thus the canonical morphisms $\theta_{\left(\coprod_{i = 1}^n 1 \right),Y}: f_!(Y^{\left(\coprod_{i=1}^n 1\right)}) \to f_!(Y)^{\left(\coprod_{i=1}^n 1\right)}$ are well-defined. When they are isomorphisms for every $n$, we say $f$ is \textbf{finitely power-connected}.

Now suppose $f$ is a connected, locally connected geometric morphism. Then we say $f_!$ \textbf{preserves $\Ecal$-indexed powers}, or that $f$ is \textbf{power-connected} if $\theta_{f^*(X),Y}$ is an isomorphism for all objects $X \in \Ecal$ and $Y \in \Fcal$. Since $f^*(\coprod_{i \in I} 1) \cong \coprod_{i \in I} 1$ in $\Fcal$, this implies finite power-connectedness.

Finally, suppose $f$ is strongly connected. We say $f$ is \textbf{cartesian-closed-connected}\footnote{For want of a better name: given the negative results of this section, we have been unable to obtain enough useful intuition about this condition in order to inform the name.} if $f_!$ is cartesian-closed. By inspection, this implies being power-connected.
\end{dfn}

The first of these definitions is clearly implied by strong connectedness, since powers correspond to products in which all entries are equal. For the global sections morphisms of the toposes studied in this paper, however, it is equally strong:
\begin{crly}
The global sections morphism of $\Setswith{M}$ is finitely power-connected if and only if it is strongly connected, since $\Setswith{M}$ is strongly connected if and only if $M \times M = M^2$ is indecomposable by Proposition \ref{prop:strongcon}. See Proposition \ref{prop:veryconnected} below for this result in the case of toposes of the form $\Sh(X)$.
\end{crly}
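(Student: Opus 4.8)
The plan is to verify the two implications separately, noting that one is purely formal while the other reduces to a single application of Proposition \ref{prop:strongcon}. The implication from strong connectedness to finite power-connectedness is the content of the sentence immediately preceding the statement: if $\Setswith{M}$ is strongly connected then $C = f_!$ preserves all finite products, and since each power $Y^{\coprod_{i=1}^n 1} \cong Y^n$ is a finite product of copies of $Y$ (via the exponential law $Y^{A \sqcup B} \cong Y^A \times Y^B$), the comparison map $\theta_{(\coprod_{i=1}^n 1),Y}$ is an isomorphism for every $n$ and every $Y$. No work is required here beyond unwinding the definitions.

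For the converse I would exploit the fact that Proposition \ref{prop:strongcon} reduces strong connectedness to the single condition that $M \times M$ be indecomposable as a right $M$-set, namely its item (\ref{item:MxMindec}). The key observation is that $M \times M = M^2$ is itself a power, so it is already governed by the finite power-connectedness hypothesis at $n = 2$ and $Y = M$. Concretely, I would instantiate $\theta_{(\coprod_{i=1}^2 1),M}$, which under the identifications $M^{\coprod_{i=1}^2 1} \cong M \times M$ and $C(M)^{\coprod_{i=1}^2 1} \cong C(M) \times C(M)$ is precisely the comparison map $C(M \times M) \to C(M) \times C(M)$. Since $M$ is a single orbit as a right $M$-set (every element lies in the orbit of $1$), it is indecomposable, so $C(M) \cong 1$ by Proposition \ref{prop:connected-objects}. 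Finite power-connectedness then forces $C(M \times M) \cong C(M) \times C(M) \cong 1$, so that $|C(M \times M)| = 1$ and hence $M \times M$ is indecomposable. Proposition \ref{prop:strongcon} then delivers strong connectedness.

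I expect no genuine obstacle: the difficulty one might anticipate—that finite power-connectedness only constrains $C$ on powers $Y^n$ rather than on arbitrary binary products $X \times Y$—evaporates once one recognises that the characterising product $M \times M$ is itself a power. Thus the entire substance of the converse is the choice $Y = M$, $n = 2$, together with the computation $C(M) \cong 1$. The one point to get right is the identification $Y^{\coprod_{i=1}^2 1} \cong Y \times Y$, so that the specialised comparison $\theta$ is literally the binary-product comparison map for $M$.
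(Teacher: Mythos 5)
Your proof is correct and follows exactly the paper's own (very terse) argument: the forward direction is the trivial observation that powers are finite products, and the converse instantiates finite power-connectedness at $n=2$, $Y=M$ to get $C(M\times M)\cong C(M)\times C(M)\cong 1$, so that $M\times M$ is indecomposable and Proposition \ref{prop:strongcon} applies. The only difference is that you spell out the identifications (the exponential law $Y^{1\sqcup 1}\cong Y\times Y$ and $C(M)\cong 1$) that the paper leaves implicit in its one-line ``since'' clause.
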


For our investigation of power-connectedness, we begin with the case where $M$ fails to satisfy the right Ore condition. In \cite[Proposition 3.4]{sedaghatjoo-khaksari} it is noted that a further condition coinciding with the right Ore condition is the property that every indecomposable $M$-set has finite width, in the sense that there exists an upper bound on the length of the zigzag needed to connect any pair of elements.
This is formalised as follows:
\begin{dfn}[{\cite[Section 1]{sedaghatjoo-khaksari}}] 
\label{dfn:width}
Let $M$ be a monoid and $A$ a right $M$-set. We say that two elements $a,b \in A$ can be \textbf{connected by a scheme of length $n$} if we can find $s_1,\dots,s_n,t_1,\dots,t_n \in M$, $a_1,\dots,a_n \in A$ such that
\begin{equation*}
a = a_1 s_1,~ a_1t_1=a_2s_2,~\dots,~a_{n-1}t_n = a_n s_n,~ a_n t_n = b.
\end{equation*}
\end{dfn}

\begin{proposition}
\label{prop:cc2}
Suppose that $M$ \textbf{does not} satisfy the right Ore condition. Then the connected components functor $C$ fails to preserve $\Set$-indexed powers; in particular, $C$ is not cartesian-closed.
\end{proposition}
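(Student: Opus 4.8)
The plan is to reduce the statement to a concrete claim about the connected components of a power of a single indecomposable $M$-set, and then to defeat that claim using the failure of the right Ore condition through the notion of width from Definition \ref{dfn:width}.

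First I would unwind the power comparison map for $C = f_!$. For a set $X$ we have $f^*(X) = \Delta(X) \cong \coprod_{i \in X} 1$, so that $Y^{f^*(X)} \cong \prod_{i \in X} Y$ for any right $M$-set $Y$, while $C(f^*(X)) \cong X$ (the counit of $(C \dashv \Delta)$ being an isomorphism, as in Lemma \ref{lem:alpha}). Using the explicit description of $\theta$ recalled at the start of this subsection, the map $\theta_{f^*(X),Y} : C(\prod_{i \in X} Y) \to C(Y)^X$ sends the component of a tuple $(y_i)_{i \in X}$ to the function $i \mapsto [y_i]$. Specialising to an \emph{indecomposable} $Y$, we have $C(Y) \cong 1$, hence $C(Y)^X \cong 1$; thus $\theta_{f^*(X),Y}$ is an isomorphism precisely when $\prod_{i \in X} Y$ is itself indecomposable. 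It therefore suffices to produce an indecomposable right $M$-set $Y$ and a set $X$ for which $\prod_{i \in X} Y$ is decomposable.

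Next I would bring in the hypothesis. Since $M$ does not satisfy the right Ore condition, the characterisation of \cite[Proposition 3.4]{sedaghatjoo-khaksari} recalled above supplies an indecomposable right $M$-set $Y$ of \emph{unbounded width}: for every $n \in \Nbb$ there exist $a_n, b_n \in Y$ that cannot be connected by any scheme of length $\le n$ (they are connected by \emph{some} scheme, as $Y$ is indecomposable, but only by longer ones). I would then take $X = \Nbb$ and consider the two elements $\alpha = (a_n)_{n \in \Nbb}$ and $\beta = (b_n)_{n \in \Nbb}$ of $\prod_{n \in \Nbb} Y$. The crux is to show they lie in different components. Suppose for contradiction that $\alpha$ and $\beta$ were connected by a scheme of some finite length $N$ in $\prod_n Y$. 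Because the $M$-action on a product is coordinatewise, the monoid elements $s_i,t_i$ appearing in that scheme are shared across all coordinates, and the intermediate tuples project to intermediate elements of $Y$; hence projecting the scheme onto the $N$-th coordinate yields a scheme of length $N$ connecting $a_N$ and $b_N$ in $Y$. This contradicts the choice of $a_N,b_N$. So $\alpha$ and $\beta$ are not connected, $\prod_{n \in \Nbb} Y$ is decomposable, and $\theta_{f^*(\Nbb),Y}$ is not an isomorphism. Thus $C$ does not preserve $\Set$-indexed powers; since these power comparison maps are instances of the general exponential comparison maps $\theta_{P,Q}$ (with $P = f^*(\Nbb)$, the product $Y \times f^*(\Nbb)$ being a coproduct of copies of $Y$ and so preserved by $C$), a cartesian-closed $C$ would force all of them to be isomorphisms, whence $C$ is a fortiori not cartesian-closed.

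I expect the main obstacle to be the width input: making precise --- or, should one prefer a self-contained argument avoiding \cite{sedaghatjoo-khaksari}, constructing directly from a pair $m_1,m_2$ with $m_1M \cap m_2M = \varnothing$ --- an indecomposable $M$-set in which some pairs of elements require arbitrarily long zigzags (note that $M$ itself will not do, since any two of its elements are joined through $1$ by a single span). The projection argument that transports a scheme from the power down to a fixed coordinate of $Y$ is then routine, relying only on the coordinatewise nature of the diagonal action.
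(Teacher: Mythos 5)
Your proof is correct, and its combinatorial core is the same as the paper's: identify the power $Y^{\Delta(X)}$ with the $X$-indexed product carrying the coordinatewise action, reduce preservation of powers (for indecomposable $Y$) to indecomposability of that product, and kill that by projecting a hypothetical connecting scheme of length $N$ onto the $N$-th coordinate, contradicting the choice of $a_N,b_N$. Where you genuinely differ is at the step you yourself flag as the main obstacle: producing an indecomposable right $M$-set of unbounded width. You outsource it to \cite[Proposition 3.4]{sedaghatjoo-khaksari} (legitimately, since the paper recalls that equivalence just before the proposition), whereas the paper constructs one by hand: given $a,b \in M$ with $aM \cap bM = \varnothing$, quotient $M$ to an indecomposable $M$-set $S$ in which the ideals $aM$ and $bM$ become two distinct fixed points, then glue countably many copies of $S$ into a chain by identifying the fixed point $b$ of the $n$-th copy with the fixed point $a$ of the $(n+1)$-th copy; in the resulting indecomposable $M$-set, $a_0$ and $a_k$ cannot be connected by a scheme of length less than $k$, and the paper compares the tuples $(a_n)_{n}$ and $(a_0)_{n}$ in the countable power. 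The explicit construction buys self-containedness --- in effect it proves exactly the direction of the cited equivalence that you rely on, so the proposition does not rest on the semigroup-theoretic literature --- while your route buys brevity at the cost of placing the real content of the proof inside a citation. Your remaining steps (the identification of the power comparison map via $C\Delta \cong \mathrm{id}$, and the observation that failure on powers precludes cartesian-closedness because products with constant objects are preserved by $C$) are sound.
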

\begin{proof}
We shall construct an indecomposable right $M$-set $X$ such that $X^{\Delta(\Nbb)}$ is not indecomposable.

Since $M$ does not satisfy the right Ore condition, there is some pair of elements $a,b$ with $aM \cap bM = \emptyset$. Construct the $M$-set $S$ by quotienting by the equivalence relation $m \sim m'$ iff $m = m'$ or $m,m' \in aM$ or $m,m' \in bM$. The quotient map $M \too S$ sends the ideals $aM$ and $bM$ to distinct fixed points of $S$; abusing notation we call these fixed points $a$ and $b$ respectively.

Now let $X$ be the quotient of $\bigsqcup_{n \in \Nbb} S$ by the equivalence relation identifying the element $b$ of the $n$th copy of $S$ with the element $a$ of the $(n+1)$th copy of $S$. Denoting the image of $a$ from the $n$th copy by $a_n$, we observe that for and $k \in \Nbb$, $a_0$ and $a_k \in X$ cannot be connected by a scheme of length less than $k$.

Elements of $X^{\Delta(\Nbb)}$ can be identified with $\Nbb$-indexed tuples of elements of $X$, which we notate as vectors. Take two elements $\vec{x},\vec{y} \in X^{\Delta(\Nbb)}$. If $\vec{x}$ and $\vec{y}$ can be connected by a scheme of length $1$, then this means that there is some $\vec{z} \in X^{\Delta(\Nbb)}$ and elements $s,t \in M$ such that $\vec{x} = \vec{z}s$ and $\vec{y} = \vec{z}t$. In particular, for each index $n$ we have $x_n = z_ns$ and $y_n = z_nt$ so these are connected by a scheme of length $1$ in $X$. Analogously, if $\vec{x}$ and $\vec{y}$ can be connected by a scheme of length $k$, then $x_n$ and $y_n$ can be connected by a scheme of length $k$ in $X$ for all indices $n \in \Nbb$.

Now let $x_n = a_n$ and $y_n = a_0$ for all $n \in \Nbb$. The resulting elements $\vec{x}$ and $\vec{y}$ are in separate components of $X^{\Delta(\Nbb)}$, since for any $k \in \Nbb$, an element $\vec{z}$ connected to $\vec{y}$ by a scheme of length less than $k$ cannot have $k$th component equal to $a_k$. Thus $X^{\Delta(\Nbb)}$ is not indecomposable, as claimed.
\end{proof}

For the case where $M$ satisfies the right Ore condition and $\Setswith{M}$ is power-connected, we have Theorem \ref{thm:labsorb}.\ref{item:powers} below. Finally, we examine the still stronger condition of cartesian-closed-connectedness.

\begin{proposition}
\label{prop:cc1}
Suppose that $M$ satisfies the right Ore condition and that the connected components functor $C$ is cartesian-closed. Then $M$ is trivial.
\end{proposition}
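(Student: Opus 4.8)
The plan is to exploit cartesian-closedness at the single pair $P = Q = M$, where $M$ denotes the right regular representation, and to play it against the right Ore condition.

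First I would record what strong connectedness buys us. Being cartesian-closed, $C$ preserves finite products, so by Proposition \ref{prop:strongcon} the right $M$-set $M \times M$ is indecomposable; moreover the regular representation $M$ is itself indecomposable, so $C(M) = 1$. Consequently the codomain of the comparison morphism $\theta_{M,M} \colon C(M^M) \to C(M)^{C(M)}$ is $1^1 = 1$. Since $C$ is cartesian-closed, $\theta_{M,M}$ is an isomorphism, and therefore $M^M = \HOM_M(M \times M, M)$ is indecomposable.

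The key step is to test this conclusion against the two projections. Using the description of the exponential from \eqref{eq:exp1}, the underlying set of $M^M$ is $\HOM_M(M \times M, M)$ with $m \in M$ acting by $(g \cdot m)(n,p) = g(mn,p)$, and the two projections $\pi_1, \pi_2 \colon M \times M \to M$, given by $\pi_1(n,p) = n$ and $\pi_2(n,p) = p$, are both elements of this $M$-set. Because $M^M$ is indecomposable and $M$ satisfies the right Ore condition, I would invoke the common-multiple property established inside the proof of Theorem \ref{thm:deMorgan} (the implication $(3) \Rightarrow (4)$, where the right Ore condition is used to push out a connecting zigzag): any two elements of an indecomposable right $M$-set admit a common multiple. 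Applied to $\pi_1$ and $\pi_2$, this yields $k, k' \in M$ with $\pi_1 \cdot k = \pi_2 \cdot k'$. Evaluating this equality of functions at an argument of the form $(1,p)$ gives $k = (\pi_1 \cdot k)(1,p) = (\pi_2 \cdot k')(1,p) = p$ for every $p \in M$, which forces $M = \{k\}$ to be trivial.

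I expect the main obstacle to be conceptual rather than computational: one must recognise that the exponent worth probing is the non-constant object $P = M$ itself, since constant (power) exponents only detect power-connectedness, which holds here and hence cannot force triviality; and one must deploy the right Ore condition through its common-multiple reformulation rather than in its raw form. The remaining points are routine verifications: that $\pi_1$ and $\pi_2$ are genuinely $M$-set morphisms, that the cited portion of Theorem \ref{thm:deMorgan} delivers exactly the common-multiple property in any indecomposable $M$-set, and that cartesian-closedness really entails strong connectedness so that Proposition \ref{prop:strongcon} is available.
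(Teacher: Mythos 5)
Your proposal is correct and follows essentially the same route as the paper's own proof: probe cartesian-closedness at $P=Q=M$ to conclude that $M^M=\HOM_M(M\times M,M)$ is indecomposable, then use the right Ore condition to produce a common multiple $\pi_1\cdot k=\pi_2\cdot k'$ of the two projections, and read off triviality from the explicit action \eqref{eq:exp1}. The only cosmetic difference is that you conclude by evaluating at $(1,p)$, whereas the paper argues that $\pi_1\cdot m_1$ is independent of the second argument while $\pi_2\cdot m_2=\pi_2$; both computations are the same in substance.
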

\begin{proof}
Being cartesian-closed requires that $C(M^M) \cong C(M)^{C(M)} \cong 1$, so $M^M$ is indecomposable. In particular, the projection maps $\pi_1,\pi_2:M \times M \rightrightarrows M$ in $M^M = \HOM_M(M \times M, M)$ must be in the same component under the action described above. But since $M$ has the right Ore condition, they are in the same component if and only if there are $m_1,m_2$ with $\pi_1 \cdot m_1 = \pi_2 \cdot m_2$. By inspection of the action, $\pi_2 \cdot m_2 = \pi_2$ for all $m_2 \in M$, while $\pi_1 \cdot m_1$ is independent of the second argument for any $m_1 \in M$, so the same must also be true of $\pi_2$, which forces $M$ to be trivial.
\end{proof}

\subsection{Right Collapsibility and Total Connectedness}

We can express localness of a connected geometric morphism $f$ (Definition \ref{dfn:local}) as the existence of a right adjoint to $f$ in the 2-category of Grothendieck toposes and geometric morphisms; see \cite[Theorem C3.6.1]{Ele}. From this perspective, the dual property to localness, appearing in \cite[Theorem C3.6.14]{Ele}, is total connectedness.

\begin{dfn}
A geometric morphism $f:\Fcal \to \Ecal$ is called \textbf{totally connected} if it is locally connected and the left adjoint $f_!$ preserves finite limits.
\end{dfn}

The following is adapted from Johnstone's results, \cite[Example C3.6.17(a)]{Ele} and \cite[Lemma 1.1]{PLC}. 
\begin{proposition}
\label{prop:veryconnected}
Let $X$ be a sober topological space. Then the following are equivalent:
\begin{enumerate}
\item $\Sh(X)$ is totally connected.
\item $\Sh(X)$ is strongly connected.
\item $\Sh(X)$ is finitely power-connected.
\item Any non-empty open subset of $X$ is connected and a finite intersection of these is nonempty.
\item $X$ is \textbf{irreducible}: if $X = X_1 \cup X_2$ for closed subsets $X_1$ and $X_2$, then $X_1 = X$ or $X_2 = X$.
\item $X$ has a \textbf{dense point}, i.e.\ a point that is contained in all non-empty open sets.
\end{enumerate}
\end{proposition}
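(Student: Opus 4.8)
The plan is to prove the chain of equivalences by establishing a cycle of implications, exploiting the general topos-theoretic facts already assembled in the excerpt together with the special geometry of sober spaces. Since $\Sh(X)$ is always connected and locally connected when $X$ is connected and locally connected, the bulk of the work lies in relating the product-preservation and limit-preservation properties of $f_!$ to the topological conditions $(4)$--$(6)$. I would organise the argument around the dictionary between the components of the global sections morphism and the local-homeomorphism picture recalled earlier: $f_!(\pi:E\to X)$ computes connected components of $E$, $f^*$ sends a set to the corresponding cover of $X$, and $f_*$ takes global sections.

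\textbf{The topological core.} I would first dispose of the purely point-set equivalences $(4)\Leftrightarrow(5)\Leftrightarrow(6)$, which are standard facts about sober (or more generally locally connected) spaces. For $(5)\Rightarrow(6)$, irreducibility says the intersection of any two non-empty opens is non-empty, so the non-empty opens form a filter; in a sober space this filter is the neighbourhood filter of a (unique) point, which is then dense. For $(6)\Rightarrow(5)$ and $(6)\Rightarrow(4)$ one checks that a dense point lies in every non-empty open, forcing finite intersections of non-empty opens to be non-empty and each non-empty open to be connected (any clopen-in-$U$ decomposition would have to miss the dense point). These are elementary and I would state them compactly rather than grinding through them.

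\textbf{Connecting geometry to topos-theoretic properties.} The implication $(1)\Rightarrow(2)\Rightarrow(3)$ is formal and already recorded in the excerpt: totally connected implies strongly connected because preserving finite limits is stronger than preserving finite products, and strongly connected implies finitely power-connected since powers are special products (Definition~\ref{dfn:powers}). The substantive direction is to close the loop by showing $(3)$ or $(4)$ forces $(1)$. Here I would argue that condition $(4)$ — every non-empty open connected, with the non-empty opens closed under finite intersection — is exactly what makes $f_!$ preserve finite limits: the opens of $X$ form the subterminal objects, connectedness of each non-empty open means $f_!$ sends each such subterminal to $1$, and stability under finite intersection means $f_!$ preserves the relevant finite meets; combined with preservation of the terminal object (which comes from connectedness, $f_!(1)=1$), one upgrades product-preservation to finite-limit-preservation using that $\Sh(X)$ is generated by subobjects of $1$. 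I expect the cleanest route is to invoke Johnstone's criterion \cite[Example C3.6.17(a)]{Ele} identifying totally connected localic toposes with those whose frame of opens has the stated property, so that $(1)\Leftrightarrow(4)$ becomes a direct appeal, with the dense-point reformulation $(6)$ giving the focal/generic point dual to the focal point of Proposition~\ref{prop:localspace}.

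\textbf{The main obstacle} will be pinning down the equivalence $(3)\Rightarrow(4)$ (equivalently $(3)\Rightarrow(1)$) rather than the easy forward implications, since finite power-connectedness is \emph{a priori} weaker than preservation of all finite products. The strategy is to show that failure of $(4)$ already breaks power-connectedness: if some non-empty open $U$ is disconnected or some finite intersection of non-empty opens is empty, I would build an explicit cover $E\to X$ whose power $E^{f^*(n)}$ has strictly more connected components than $f_!(E)^n$, exactly as in the monoid-side argument of Proposition~\ref{prop:cc2} where the relevant infinite or finite power fails to be indecomposable. This forces the collapse $(3)\Rightarrow(2)$, after which $(2)\Leftrightarrow(4)$ follows from Example~\ref{xmpl:strongconnect} transported through the localic case, closing the cycle.
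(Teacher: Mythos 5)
Your outer skeleton (the formal chain $(1)\Rightarrow(2)\Rightarrow(3)$ and the point-set equivalences $(4)\Leftrightarrow(5)\Leftrightarrow(6)$) matches the paper and is fine, but both substantive implications have genuine gaps. The most serious is $(3)\Rightarrow(4)$. Your plan to adapt the construction of Proposition~\ref{prop:cc2} cannot succeed: that argument is specific to \emph{infinite} powers --- it glues an $\Nbb$-indexed chain of copies of an $M$-set so that elements at unboundedly large ``scheme distance'' exist, and then uses the $\Nbb$-indexed exponent to diagonalise; nothing of this survives when the exponent is finite. Moreover, the failure mode you predict (the power acquiring \emph{strictly more} components than $f_!(E)^n$) is in the wrong direction: in $\Sh(X)$ products of local homeomorphisms are \emph{fibred} products over $X$, and the counterexamples produced by a failure of $(4)$ make the comparison map non-\emph{surjective}, not non-injective. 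The idea you are missing is the paper's one-line argument: apply finite power-connectedness to a subterminal object $U$ (a non-empty open). Since $U \times U \cong U$, the comparison map $f_!(U) \to f_!(U) \times f_!(U)$ is the diagonal, and the diagonal of a set is a bijection only when that set has at most one element; hence every non-empty open is connected, and then two disjoint non-empty opens $U,V$ would make $U \cup V$ a disconnected open, a contradiction.

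Closing the cycle back into $(1)$ is also not actually done. Your sketch that the behaviour of $f_!$ on subterminals ``upgrades'' to preservation of all finite limits because $\Sh(X)$ is generated by subobjects of $1$ is not a proof: a cocontinuous functor's values on a generating family of subterminals do not determine whether it preserves equalizers or pullbacks of arbitrary sheaves (compare Proposition~\ref{prop:totally2}, where the analogous upgrade on the monoid side requires a genuine argument). Your fallback of citing \cite[Example C3.6.17(a)]{Ele} for $(1)\Leftrightarrow(4)$ simply appeals to the very statement this proposition is adapted from. The paper instead proves $(6)\Rightarrow(1)$ directly and constructively: a dense point $x_0$ determines a point of the topos whose inverse image sends a local homeomorphism $E \to X$ to its fibre over $x_0$; each connected component of $E$ meets that fibre in exactly one point, so this inverse image functor is naturally isomorphic to $f_!$, which therefore preserves finite limits. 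Finally, your appeal to Example~\ref{xmpl:strongconnect} for $(2)\Leftrightarrow(4)$ does not ``transport through the localic case'': that example concerns presheaf toposes $\Setswith{\Dcal}$, and $\Sh(X)$ is not a presheaf topos in general, so no siftedness criterion for an indexing category is available there.
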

\begin{proof}
($1 \Rightarrow 2 \Rightarrow 3$) These implications are trivial.
 
($3 \Rightarrow 4$) Let $X$ be a connected, locally connected topological space such that $\Sh(X)$ is finitely power-connected with global sections geometric morphism $f : \Sh(X) \to \Set$. For two open subsets $U$ and $V$, their product as subterminal objects in $\Sh(X)$ is given by the intersection $U \cap V$. Since $f_!$ preserves finite powers, we know that the natural map $f_!(U) \to f_!(U) \times f_!(U)$ coincides with the diagonal and is an isomorphism, which shows that $U$ is connected whenever it is non-empty (since $U$ has at least one connected component). Moreover, for non-empty $U,V$, if we had $U \cap V$ empty, $U \cup V$ would be a disconnected open set, a contradiction.

($4 \Leftrightarrow 5$) Given closed subsets $X_1,X_2 \subset X$ with $X_1 \cup X_2 = X$, we have $(X - X_1) \cap (X - X_2) = \emptyset$. Given that intersections of non-empty open sets are non-empty, this means that one of $(X - X_1)$ or $(X - X_2)$ must be empty. Hence $X$ is irreducible. Conversely, given two disjoint open subsets, their complements are closed and cover $X$, so one of them must be empty. 

($4 \Rightarrow 6$) Condition ($4$) ensures that the non-empty open sets of $X$ form a completely prime filter, and hence correspond to a point contained in every open set.

($6 \Rightarrow 1$) Having a dense point ensures that $X$ is connected and locally connected. Expressing $\Sh(X)$ as the category of local homeomorphisms over $X$, each connected component of an object $E \to X$ must meet the fibre over the dense point in exactly one point. The inverse image functor of (the geometric morphism corresponding to) this point, which gives the set of points of $E$ lying in the fibre over it, is therefore isomorphic to the connected components functor $f_!$. Being the inverse image functor of a geometric morphism means that $f_!$ preserves finite limits, as required.
\end{proof}

For a ring $R$ without non-zero nilpotent elements, $\mathrm{Spec}(R)$ is irreducible (for the Zariski topology) if and only if $R$ does not have zero divisors (i.e.\ $R$ is a domain).

As in the last section, we can express $C$ as sending an $M$-set to its colimit as a functor. Thus $C$ preserves finite limits if and only if colimits of shape $M\op$ commute with finite limits in $\Set$. A well-known result regarding commuting limits and colimits is that colimits of shape $\Dcal$ commute with finite limits in a topos if and only if $\Dcal$ is \textbf{filtered}, which means concretely that:
\begin{itemize}
	\item $\Dcal$ is non-empty,
	\item For any pair of objects $P,Q$ of $\Dcal$, there is a cospan from $P$ to $Q$.
	\item For any pair of parallel morphisms $f,g:P \rightrightarrows Q$ there is some $h:Q \to R$ in $\Dcal$ with $hf = hg$.
\end{itemize}
We correspondingly say that $\Dcal\op$ is \textbf{cofiltered} if $\Dcal$ satisfies these conditions. Applying this to $M$ as a one-object category as in the last section, we see that the first two conditions are trivial.
\begin{definition}[see {\cite{sedaghatjoo-khaksari}, \cite[III, Definition 14.1]{MAC}}]
We say that $M$ is \textbf{right collapsible} if for any pair $m_1,m_2$ of elements of $M$, there exists $m \in M$ with $m_1m=m_2m$, that is, if $M$ is cofiltered as a category.
\end{definition}

\begin{theorem}[Conditions for $\Setswith{M}$ to be totally connected]
\label{thm:totally}
The following are equivalent:
\begin{enumerate}
	\item $C$ preserves finite limits, i.e.\ $\Setswith{M}$ is totally connected, or the terminal left $M$-set $1$ is flat.
	\item $M$ is cofiltered as a category.
	\item $M$ is right collapsible.
	\item $C$ preserves pullbacks. 
	\item $C$ preserves equalizers. 
	\item The category of points of $\Setswith{M}$ has a terminal object.
\end{enumerate}
\end{theorem}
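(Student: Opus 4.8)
The plan is to run a cycle through the combinatorial conditions (2)–(3) and the limit-preservation conditions (1), (4), (5), and then to treat (6) separately against (1). The equivalences $(1) \Leftrightarrow (2) \Leftrightarrow (3)$ are essentially handed to us by the discussion preceding the statement. Since $C$ sends a right $M$-set, viewed as a presheaf $M\op \to \Set$, to its colimit, it preserves finite limits exactly when colimits of shape $M\op$ commute with finite limits in $\Set$, i.e.\ when $M\op$ is filtered, i.e.\ when $M$ is cofiltered. The only real care needed here is the handedness bookkeeping: unwinding composition in $M\op$, the single nontrivial filteredness axiom becomes the requirement that for every $m_1,m_2 \in M$ there is some $m$ with $m_1 m = m_2 m$, which is precisely right collapsibility. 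This yields $(1)\Leftrightarrow(2)\Leftrightarrow(3)$ at once.

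For the limit-flavoured conditions, $(1)\Rightarrow(4)$ and $(1)\Rightarrow(5)$ are immediate, as pullbacks and equalizers are finite limits, and $(4)\Rightarrow(5)$ is Proposition \ref{prop:flatness-properties}(2). The step that actually closes the cycle is $(5)\Rightarrow(3)$, which I would argue concretely. Given $m_1,m_2 \in M$, consider the two right $M$-set endomorphisms $\lambda_{m_1},\lambda_{m_2} \colon M \rightrightarrows M$ defined by left multiplication, $x \mapsto m_i x$; their equalizer is the sub-$M$-set $E = \{x \in M : m_1 x = m_2 x\}$. Since $M$ is indecomposable, $C(M) = 1$, and both maps $C(\lambda_{m_i}) \colon 1 \to 1$ are the identity, so the equalizer of the $C$-image is $1$. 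If $C$ preserves equalizers then $C(E) \cong 1$ is nonempty, forcing $E \neq \varnothing$, i.e.\ some $m$ with $m_1 m = m_2 m$, which is right collapsibility. This closes the loop $(1)\Rightarrow(4)\Rightarrow(5)\Rightarrow(3)\Rightarrow(1)$. (One could instead deduce $(4)\Rightarrow(1)$ from Proposition \ref{prop:flatness-properties}(3), since $C$ preserves the terminal object by connectedness.)

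Finally I would establish $(1)\Leftrightarrow(6)$ using the equivalence between the category of points of $\Setswith{M}$ and the category of flat left $M$-sets. For $(1)\Rightarrow(6)$, condition (1) says the terminal left $M$-set $1$ is flat; being terminal among \emph{all} left $M$-sets, it remains terminal in the full subcategory of flat ones, so the category of points acquires a terminal object. For $(6)\Rightarrow(1)$, let $T$ be terminal in the category of flat left $M$-sets. The free left $M$-set $M$ (with left multiplication) is representable, hence flat, hence a point, so there is a unique map $M \to T$; but a left $M$-set map out of the free object $M$ is determined freely by the image of $1$, giving $\Hom(M,T) \cong |T|$. Uniqueness then forces $|T| = 1$, so $T \cong 1$, and in particular the terminal left $M$-set is flat.

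I expect the main obstacle to be the handedness bookkeeping, both in matching the filteredness of $M\op$ to right (rather than left) collapsibility, and in the $(5)\Rightarrow(3)$ argument, where one must choose left-multiplication endomorphisms of $M$ so that the equalizer's nonemptiness delivers exactly the relation $m_1 m = m_2 m$; the $(6)\Rightarrow(1)$ direction is slick once one notices that maps out of the representable $M$ recover its underlying set.
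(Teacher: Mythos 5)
Your proposal is correct and follows essentially the same route as the paper's proof: conditions (1)--(3) via the commutation of $M\op$-colimits with finite limits, $(1)\Rightarrow(4)\Rightarrow(5)$ via Proposition \ref{prop:flatness-properties}, $(5)\Rightarrow(3)$ by applying $C$ to the equalizer of the two left-multiplication maps $M \rightrightarrows M$, and $(1)\Leftrightarrow(6)$ by identifying points with flat left $M$-sets and using that homomorphisms out of $M$ recover the underlying set. The extra details you supply (e.g.\ why the equalizer of the $C$-image is $1$, and the alternative closure via $(4)\Rightarrow(1)$) are accurate elaborations of the same argument.
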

\begin{proof}
Equivalence of $(1),(2)$ and $(3)$ follows from the discussion above.

($1 \Rightarrow 4 \Rightarrow 5$) The first implication is trivial, the latter is Proposition \ref{prop:flatness-properties}.

($5 \Rightarrow 3$) For $m_1,m_2 \in M$, consider the diagram
\begin{equation*}
\begin{tikzcd}
M \ar[r,shift left=1,"{m_1 \cdot}"] \ar[r,shift right=1,"{m_2 \cdot}"'] & M
\end{tikzcd}.
\end{equation*}
Because $C$ preserves equalizers, the equalizer of this diagram is non-empty, so there is an $m \in M$ with $m_1m = m_2m$.

($1 \Leftrightarrow 6$) The category of points of $\Setswith{M}$ can be identified with the category of flat left $M$-sets and homomorphisms of $M$-sets between them. Since in particular $M$ (with left action given by multiplication) is flat, the category of points has a terminal object if and only if $1$ is flat (on the left), since any other non-empty $M$-set admits more than one homomorphism from $M$.
\end{proof}

\begin{rmk}
Note that a functor into $\Set$ is flat in the sense of Definition \ref{dfn:filtering} if and only if its category of elements is filtered in the above sense; since the general definition of the category of elements is rather involved, we mention only in passing that the category of elements for the terminal left $M$-set is precisely $M\op$, which gives an alternative proof of the equivalence ($1 \Leftrightarrow 2$) in the above. 
\end{rmk}

\begin{remark}
In the above we recovered Sedaghatjoo and Khaksari's result \cite[Lemma 3.7]{sedaghatjoo-khaksari}, which is the equivalence ($1 \Leftrightarrow 3$). The equivalence ($4 \Leftrightarrow 5$) can also be seen as the statement that the left $M$-set with one element is pullback-flat if and only if it is equalizer-flat. In the semigroup literature, it is shown more generally that equalizer-flatness and pullback-flatness coincide for cyclic $M$-sets, see Kilp \textit{et al.} \cite[III,Theorem 16.7]{MAC}.
\end{remark}

In \cite[Corollary 3.8]{sedaghatjoo-khaksari}, Sedaghatjoo and Khaksari show that a monoid $M$ is right collapsible if and only if it is right Ore and $M \times M$ is indecomposable as a right $M$-set with the diagonal action. By the characterisations in preceding sections, this means that $\Setswith{M}$ is totally connected if and only if it is de Morgan and strongly connected. We give an alternative proof of this fact:
\begin{proposition}
\label{prop:totally2}
Let $M$ be a monoid. Then $\Setswith{M}$ is totally connected if and only if it is de Morgan and strongly connected.
\end{proposition}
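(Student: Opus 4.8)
The plan is to prove the two implications separately, translating each of the three topos-theoretic properties into its elementary description via Theorems~\ref{thm:totally} and \ref{thm:deMorgan} and Proposition~\ref{prop:strongcon}. Total connectedness of $\Setswith{M}$ says exactly that $C$ preserves finite limits (equivalently, that $M$ is right collapsible); being de Morgan says that $C$ preserves monomorphisms (equivalently, that any two non-empty sub-$M$-sets of an indecomposable right $M$-set intersect); and being strongly connected says that $C$ preserves finite products (equivalently, that $M \times M$ is indecomposable as a right $M$-set with the diagonal action).

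The forward implication is immediate. If $C$ preserves all finite limits, then in particular it preserves finite products, so $\Setswith{M}$ is strongly connected by Proposition~\ref{prop:strongcon}. Moreover, preserving finite limits forces $C$ to preserve pullbacks, and hence monomorphisms (being monic is expressible as the kernel-pair pullback condition, which $C$ then preserves), so $\Setswith{M}$ is de Morgan by condition~(1) of Theorem~\ref{thm:deMorgan}. No real work is needed in this direction.

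For the converse I would argue directly with elements. Assume $\Setswith{M}$ is strongly connected and de Morgan. By Proposition~\ref{prop:strongcon} the right $M$-set $M \times M$ is indecomposable, and by the de Morgan characterisation (condition~(2) of Theorem~\ref{thm:deMorgan}) any two non-empty sub-$M$-sets of an indecomposable right $M$-set must have non-empty intersection. The key reformulation is that right collapsibility of $M$ is precisely the statement that, for every pair $m_1,m_2 \in M$, the cyclic sub-$M$-set $(m_1,m_2)M = \{(m_1 n, m_2 n) : n \in M\}$ meets the diagonal $\Delta = \{(m,m) : m \in M\}$: a common element $(m_1 n, m_2 n) = (k,k)$ furnishes exactly an $n$ with $m_1 n = m_2 n$. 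Since $(m_1,m_2)M$ and $\Delta$ are both non-empty sub-$M$-sets of the indecomposable $M$-set $M \times M$, the intersection property applies and yields such an $n$, establishing right collapsibility and hence total connectedness by Theorem~\ref{thm:totally}.

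The only genuine insight, and thus the main (mild) obstacle, is to invoke the de Morgan property through its formulation as the intersection property for sub-$M$-sets of an indecomposable object rather than through the right Ore condition directly: once $M \times M$ is known to be indecomposable, applying this intersection property to the diagonal and an arbitrary cyclic sub-$M$-set collapses immediately to the defining equation $m_1 n = m_2 n$ of right collapsibility. All remaining checks (that $\Delta$ and $(m_1,m_2)M$ are indeed sub-$M$-sets, and that the resulting element witnesses collapsibility) are routine.
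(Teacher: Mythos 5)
Your proof is correct, and its converse direction takes a recognisably different route from the paper's. The paper works at the level of arbitrary $M$-sets: it shows directly that $C$ preserves equalizers, first reducing (via preservation of coproducts and monomorphisms) to the case where $X$ and $Y$ are indecomposable, then intersecting the graphs $\{(x,y) : f(x)=y\}$ and $\{(x,y) : g(x)=y\}$ inside the indecomposable object $X \times Y$. You instead delegate all the categorical work to the equivalence $(1 \Leftrightarrow 3)$ of Theorem~\ref{thm:totally} (which rests on the general filtered-colimit discussion, so there is no circularity) and verify only the elementary condition of right collapsibility, by applying the intersection property of Theorem~\ref{thm:deMorgan}(2) to one concrete pair of sub-$M$-sets of $M \times M$: the diagonal and the cyclic sub-$M$-set $(m_1,m_2)M$. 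The underlying mechanism is the same in both arguments --- indecomposability of a product supplied by Proposition~\ref{prop:strongcon} combined with the de Morgan intersection property --- but your version is more economical, needing neither the reduction to indecomposable components nor the graph construction, at the cost of leaning on the previously established equivalence between total connectedness and right collapsibility; the paper's version is self-contained at the topos level and exhibits explicitly how equalizers of arbitrary diagrams are preserved, which makes the categorical content of the implication more visible.
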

\begin{proof} %could this be made more streamlined/categorical?
If $C$ preserves finite limits, then it certainly also preserves finite products and monomorphisms. Conversely, suppose that $C$ preserves finite products and monomorphisms. To show that $C$ preserves all finite limits, it is enough to show that $C$ preserves equalizers. Let $E$ be the equaliser of the diagram
\begin{equation*}
\begin{tikzcd}
X \ar[r,shift left=1,"{f}"] \ar[r,shift right=1,"{g}"'] & Y.
\end{tikzcd}
\end{equation*}
Since $C(E)$ is a subobject of $C(X)$ and $C$ preserves coproducts, it is enough to show that $E$ is non-empty whenever $X$ and $Y$ are indecomposable. Because $C$ preserves finite products, $X \times Y$ is indecomposable. Therefore, consider the two (non-empty) subobjects
\begin{equation*}
\{ (x,y) \in X \times Y : f(x) = y \} \qquad \{ (x,y) \in X \times Y : g(x) = y \}.
\end{equation*}
Their intersection is isomorphic to $E$, and is non-empty by part \ref{item:deMorgan-7} of Theorem \ref{thm:deMorgan}.
\end{proof}

Proposition \ref{prop:totally2} enables us to show that when $M$ is a monoid with $\Setswith{M}$ strongly connected, $C$ need not preserve all monomorphisms. This demonstrates (for example) that the properties of a topos of being de Morgan and strongly connected are independent.
\begin{example}[Strongly connected $\not\Rightarrow$ de Morgan]
Let $M = \{1,a,b\}$ be the three-element monoid with $a$ and $b$ right-absorbing. In this case, $\PSh(M)$ is known as the topos of reflexive graphs. This topos also appears in the work of Connes and Consani \cite{connes-consani-gromov}, where the objects of this topos are seen as sets equipped with a certain notion of reflexive relation. It follows from Lemma \ref{lem:localstrong} that $\PSh(M)$ is strongly connected. However, $aM \cap bM = \varnothing$ so $\PSh(M)$ is not de Morgan.
\end{example}

\subsection{Left absorbing elements and Colocalness}

\begin{dfn}
\label{dfn:colocal}
A more direct dual of Definition \ref{dfn:local} in the context of essential geometric morphisms is the existence of an `extra left adjoint'. We say that a locally connected Grothendieck topos $\Ecal$ with global sections morphism $f$ is \textbf{colocal} if the left adjoint $f_!$ has a further left adjoint. More generally, we might say a locally connected geometric morphism $f : \Fcal \to \Ecal$ is \textbf{colocal} if its left adjoint $f_!$ has a further $\Ecal$-indexed left adjoint.
\end{dfn}

There is a characterisation of topological spaces $X$ such that $\Sh(X)$ is colocal comparable to Proposition \ref{prop:localspace}.
\begin{proposition}
\label{prop:colocalspace}
Let $X$ be a locally connected sober topological space. Then $\Sh(X)$ is a colocal topos if and only if $X$ has a (necessarily unique) \textbf{dense open point}, i.e.\ a point $x_0 \in X$ such that $\{x_0\}$ is an open set that is contained in all other open sets.
\end{proposition}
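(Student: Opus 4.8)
The plan is to compute everything through the equivalence $\Sh(X) \simeq \mathbf{LH}/X$, and to lean on Proposition \ref{prop:veryconnected}: since a colocal topos is in particular totally connected (a functor admitting a left adjoint preserves all limits, so in particular $f_!$ would preserve finite limits), both directions will take place in a setting where $X$ has a dense point $x_0$ and $f_!$ is identified with the fibre functor $E \mapsto E_{x_0}$. For the direction assuming a dense open point $x_0$, I would first use density, via Proposition \ref{prop:veryconnected}, to rewrite $f_!(E) \cong E_{x_0}$. I would then propose as candidate extra left adjoint the functor $f_{!!} : \Set \to \mathbf{LH}/X$ sending a set $A$ to the map $\{x_0\} \times A \to X$, $(x_0, a) \mapsto x_0$; this is a local homeomorphism \emph{precisely because} $\{x_0\}$ is open. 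The adjunction is then routine to verify: since $\{x_0\} \times A$ is discrete and maps into the fibre, a morphism over $X$ into $E$ is exactly a function $A \to E_{x_0}$, so $\Hom_{\mathbf{LH}/X}(\{x_0\}\times A, E) \cong \Map(A, E_{x_0}) \cong \Map(A, f_!(E))$ naturally in $E$, giving $f_{!!} \dashv f_!$ and hence colocalness in the sense of Definition \ref{dfn:colocal}.

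For the converse, I would start from colocalness, deduce as above that $\Sh(X)$ is totally connected, and apply Proposition \ref{prop:veryconnected} to obtain a dense point $x_0$ together with the identification of $f_!$ with the inverse image $(-)_{x_0}$ of the point $x_0$. The existence of a left adjoint to $f_!$ then says exactly that this point is \textbf{essential}, and the whole problem reduces to showing that an essential stalk point of a sober space is necessarily an open singleton. The concrete tool I would use is the representing object $G := f_{!!}(1)$: from the adjunction, $\Hom_{\mathbf{LH}/X}(G, E) \cong f_!(E) = E_{x_0}$ naturally, so $G \to X$ is an étale map corepresenting the fibre functor at $x_0$. Testing this against subterminal objects $W \hookrightarrow X$ for $W$ open shows, using density of $x_0$, that the image of $G$ is contained in every nonempty open set; testing against $G$ itself produces a distinguished point of $G_{x_0}$, and the étale-local section through it exhibits a minimal open neighbourhood $U$ of $x_0$.

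It then remains to prove $U = \{x_0\}$, which I would do by a separation argument: if $y \in U$ with $y \neq x_0$, sobriety (hence $T_0$) provides an open set separating them, but density of $x_0$ forbids any open set containing $y$ yet omitting $x_0$, while minimality of $U$ forbids any open neighbourhood of $x_0$ omitting a point of $U$; both cases are contradictory. Hence $\{x_0\} = U$ is open, and being the minimal open neighbourhood of a dense point it is contained in every nonempty open set, so $x_0$ is a dense open point (its uniqueness following from $T_0$, exactly as the focal point is unique in Proposition \ref{prop:localspace}). I expect the main obstacle to be the middle step of the converse, namely extracting a genuine minimal open neighbourhood from the bare existence of the left adjoint; the representing-object packaging is what I would rely on to make this transparent, since it converts the abstract adjunction into explicit structure on an étale map over $X$, after which the separation argument is elementary.
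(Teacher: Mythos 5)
Your proof is correct, and its forward direction is essentially the paper's own: the paper's left adjoint sends a set $S$ to the sheaf supported at $\{x_0\}$ with value $S$, which is precisely the sheaf of sections of your étale space $\{x_0\}\times A \to X$, so the two constructions agree under $\Sh(X)\simeq \mathbf{LH}/X$. The converse, however, takes a genuinely different route. The paper argues directly from limit preservation: since $f_!$ has a left adjoint it preserves arbitrary limits; preserving the terminal object and monomorphisms forces every non-empty open set to be connected, so applying $f_!$ to the meet of the diagram of all non-empty open subterminals gives $f_!\left(\bigwedge_{U} U\right) \cong \bigwedge_{U} f_!(U) = 1$, whence that meet is a minimal non-empty open set, which sobriety collapses to a single point. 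You instead pass through total connectedness and Proposition \ref{prop:veryconnected} to obtain the dense point $x_0$ and the identification $f_! \cong (-)_{x_0}$, and then analyse the corepresenting object $G = f_{!!}(1)$: testing $\Hom_{\mathbf{LH}/X}(G,-) \cong (-)_{x_0}$ against open subterminals shows (via density) that the image of $G$ lies in every non-empty open, and since étale maps are open (or via the local section through the distinguished point of $G_{x_0}$) this yields a minimal open neighbourhood of $x_0$; your $T_0$ separation argument then pins it down to $\{x_0\}$. The paper's converse is more economical — it needs neither the essential-point machinery, nor the corepresenting object, nor even the prior existence of a dense point — while your version buys a concrete geometric picture: colocalness is exactly essentiality of the dense point, the extra left adjoint is corepresented by an étale space over the minimal open, and only the $T_0$ fragment of sobriety is needed once the dense point is in hand (which is also, implicitly, all the paper's appeal to sobriety uses). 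Both arguments are sound.
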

\begin{proof}
Suppose that $X$ has a dense open point $x_0$. From the proof of Proposition \ref{prop:veryconnected}, we know that the connected components functor $f_!$ of $\Sh(X)$ coincides with the inverse image functor for the geometric morphism corresponding to $x_0$. Moreover, we can construct a left adjoint to $f_!$ which maps a set $S$ to the sheaf $F_S$ defined as
\begin{equation*}
F_S(U) = \begin{cases}
S \quad & \text{if }U = \{x_0\} \\
\varnothing \quad & \text{otherwise.}
\end{cases},
\end{equation*}
So $\Sh(X)$ is a colocal topos.

Conversely, suppose that $\Sh(X)$ is a colocal topos. Then the connected components functor preserves arbitrary limits. In particular, it preserves the terminal object and monomorphisms, which shows that each non-empty open subset of $X$ is connected. Now consider the diagram $\{U_i\}_{i \in I}$ of all non-empty open subsets of $X$. We have $C(\bigwedge_{U \in \Ocal(X)} U) = \bigwedge_{U \in \Ocal(X)} C(U) = 1$, so $\bigwedge_{U \in \Ocal(X)} U$ is a minimal non-empty open subset. Because $X$ is sober, this minimal open subset contains exactly one point.
\end{proof}

For a commutative ring $R$ without zero-divisors, we find that the topos of sheaves on $\mathrm{Spec}(R)$ (with the Zariski topology) is colocal if and only if there is an $f \in R$ such that $R[f^{-1}]$ is a field (necessarily the field of fractions of $R$). In this case, $R$ is called a Goldman domain. If we assume that $R$ is noetherian, then $R$ is a Goldman domain if and only if $R$ has only finitely many prime ideals; see \cite[Theorem 12.4]{PeteLClarkNotes}.

\begin{lemma}
\label{lem:lconnected}
Suppose $M$ is a monoid with a left absorbing element $l$. Then for any right $M$-set $A$, we have $C(A) \cong Al$.
\end{lemma}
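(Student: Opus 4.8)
The plan is to exhibit an explicit bijection $\phi \colon C(A) \to Al$ and verify its properties directly, mirroring the dual argument of Lemma~\ref{lem:rfixed}. The candidate map sends a connected component $[a]$ of $A$ to the element $al \in Al$; the content of the lemma is that right-multiplication by $l$ picks out a canonical representative of each component. Recall that $l$ being left absorbing means $ml = l$ for every $m \in M$.

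First I would check that $\phi$ is well-defined. For any $a \in A$ and $m \in M$ we have $(a \cdot m) \cdot l = a \cdot (ml) = a \cdot l$, so the assignment $a \mapsto al$ takes the same value on the two sides of each generating relation $x \sim x \cdot m$ of the connected-components equivalence relation. It therefore descends to a map $\phi \colon C(A) \to Al$, $[a] \mapsto al$. Surjectivity is immediate, since every element of $Al$ is of the form $al$ for some $a \in A$.

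The remaining point --- the step I would highlight, although it is short --- is injectivity. Here I use that $al$ always lies in the same component as $a$: indeed $a \sim a \cdot l = al$ by the generating relation taken with $m = l$. Consequently, if $al = a'l$ then $a \sim al = a'l \sim a'$, whence $[a] = [a']$ in $C(A)$. Combining well-definedness, surjectivity and injectivity yields the desired isomorphism $C(A) \cong Al$.

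I do not anticipate any genuine obstacle: the result is formally dual to Lemma~\ref{lem:rfixed}, and the single idea powering both directions is that right-multiplication by $l$ is a retraction $A \to A$ whose image is $Al$ and which collapses precisely the connected components of $A$.
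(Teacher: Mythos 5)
Your proof is correct and takes essentially the same approach as the paper: both arguments exhibit $a \mapsto al$ as a map that is constant on connected components (using $ml = l$) and satisfies $a \sim al$, yielding the bijection $C(A) \cong Al$. The only cosmetic difference is that where the paper verifies $a \sim b \Rightarrow al = bl$ by an explicit induction along a connecting zigzag, you obtain well-definedness more abstractly by noting that a function agreeing on the generating pairs $x \sim x \cdot m$ automatically respects the equivalence relation they generate.
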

\begin{proof}
Recall that we can express $C(A)$ as the set of equivalence classes of $A$ under the equivalence relation generated by $a \sim a\cdot m$ for $a \in A$, $m \in M$. Clearly every equivalence class has a representative of the form $a \cdot l$, so it suffices to show that if $a \sim b$ then $a \cdot l = b \cdot l$ (so this representative is unique). Indeed, for $a \sim b$ to hold there must be a finite sequence of elements of $A$, $a=a_0,a_1,\dotsc,a_k=b$ and elements $m_0,\dotsc,m_{k-1}$ and $n_1,\dotsc,n_k$ with $a_i \cdot m_i = a_{i+1} \cdot n_{i+1}$ for $i=0,\dotsc,k-1$. Then we have $a_i \cdot l = a_i \cdot m_il = a_{i+1} \cdot n_{i+1}l = a_{i+1} \cdot l$ and so inductively $a \cdot l = b \cdot l$, as required.
\end{proof}

The above lemma is the dual of Lemma \ref{lem:rfixed}. We will use it to prove $(1 \Rightarrow 6)$ in the following theorem.

\begin{thm}[Conditions for $\Setswith{M}$ to be colocal]
\label{thm:labsorb}
Let $M$ be a monoid, $C:\Setswith{M} \to \Set$ its connected components functor and $\Gamma:\Setswith{M} \to \Set$ its global sections functor. The following are equivalent:
\begin{enumerate}
	\item $M$ has a left absorbing element.
	\item $M$ has a minimal non-empty right ideal whose monoid of endomorphisms as a right $M$-set is trivial. \label{item:minimal}
	\item The category of indecomposable right $M$-sets has an initial object.
	\item Every indecomposable right $M$-set has a minimal non-empty subobject admitting no non-trivial endomorphisms. \label{item:minimal2}
	\item The category of essential points of $M$ has a terminal object.
	\item $C$ has a left adjoint $c$ which preserves connected limits.
	\item $C$ has a left adjoint $c$; that is, $\Setswith{M}$ is colocal over $\Set$.
	\item $C$ preserves arbitrary products, so the terminal left $M$-set is product-flat.
	\item $C$ preserves $\Set$-indexed powers. \label{item:powers}
\end{enumerate}
\end{thm}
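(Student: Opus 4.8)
The plan is to arrange the nine conditions into one easy cycle among the functor-theoretic statements, $(1)\Rightarrow(6)\Rightarrow(7)\Rightarrow(8)\Rightarrow(9)\Rightarrow(1)$, and to attach the ``algebraic'' conditions $(2),(3),(4),(5)$ to $(1)$ by direct two-way arguments. Most arrows are formal: $(6)\Rightarrow(7)$ forgets the preservation of connected limits; $(7)\Rightarrow(8)$ holds because a functor with a left adjoint preserves all limits, in particular products; and $(8)\Rightarrow(9)$ because the $\Set$-indexed powers $X^{\Delta S}\cong\prod_S X$ are a special case of products. For the starting arrow $(1)\Rightarrow(6)$ I would take $l$ left absorbing (so $ml=l$ for all $m$, whence $l$ is idempotent) and, guided by Lemma \ref{lem:lconnected} which gives $C(A)\cong Al$, exhibit the left adjoint explicitly as $c(S)=lM\times\Delta(S)$. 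Since $\Delta$ preserves all limits and $lM\times(-)$ preserves all limits, $c$ preserves connected limits (it fails only on the terminal object, which is not a connected limit).

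For the algebraic conditions I would first establish $(1)\Leftrightarrow(3)$. Using $\HOM_M(eM,Y)\cong\{y\in Y:ye=y\}$ for an idempotent $e$, together with Lemma \ref{lem:lconnected}, one gets $\HOM_M(lM,Y)\cong Yl\cong C(Y)$, a singleton whenever $Y$ is indecomposable; as $lM$ is itself indecomposable this makes it initial among indecomposable right $M$-sets, giving $(1)\Rightarrow(3)$. Conversely, an initial indecomposable $P$ has $\End(P)$ trivial, so composing the unique map $P\to M$ with any morphism $M\to P$ (which exists as $P\neq\varnothing$) yields the identity of $P$, exhibiting $P$ as a retract of $M$; hence $P\cong eM$ with $e$ idempotent, and $\HOM_M(eM,M)\cong Me$ being a singleton forces $Me=\{e\}$, i.e.\ $e$ is left absorbing. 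The same representability remark gives $(3)\Leftrightarrow(7)$, since $C$ has a left adjoint iff it is representable, $C\cong\HOM_M(P,-)$, with $P$ necessarily indecomposable and initial among indecomposables. I would treat $(5)$ dually: essential points are the left $M$-sets $Me$, with $\HOM_M(Mf,Me)\cong fMe$, so a terminal one $Me_0$ satisfies $|Me_0|=1$ (take $f=1$), forcing $e_0$ left absorbing, and conversely a left absorbing idempotent makes the one-element left $M$-set terminal. For $(2)$ and $(4)$ (reading ``minimal'' as \emph{least}), when $l$ is left absorbing one has $jl=l\in J$ for every nonempty right ideal $J$, so $lM$ is the least right ideal and has trivial endomorphisms by the computation above; transporting $lM$ along the unique map into an arbitrary indecomposable $X$ produces the least nonempty subobject (its generator $yl$ is the unique $l$-fixed point of $X$ and lies in every nonempty subobject). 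The converses recover $(1)$ by specialising $(4)$ to $X=M$ and running the retract argument.

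The main obstacle is the single arrow closing the functor cycle, $(9)\Rightarrow(1)$. First, the contrapositive of Proposition \ref{prop:cc2} shows that $(9)$ forces the right Ore condition. Under right Ore the ``cospan'' relation $x\approx y\iff\exists s,t\in M:\,xs=yt$ is reflexive, symmetric and transitive (given $xs=yt$ and $yu=zv$, apply right Ore to the two middle multipliers $t,u$ to find $t\alpha=u\beta$, so $x(s\alpha)=z(v\beta)$) and contains the generators $x\sim xm$; hence in \emph{every} right $M$-set connectedness coincides with $\approx$. Now apply $(9)$ with $X=M$: since $C(M)=1$, the power $M^{\Delta S}\cong\prod_S M$ is indecomposable for every set $S$. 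Taking $S=M\times M$, the projection tuples $L=(a)_{(a,b)}$ and $R=(b)_{(a,b)}$ lie in the unique component, so $L\approx R$, i.e.\ there are $s,t\in M$ with $L\cdot s=R\cdot t$; reading this off coordinatewise gives $as=bt$ for all $a,b\in M$. Setting $a=b=1$ yields $s=t$, and then $as=s$ for every $a$, so $s$ is left absorbing, which is $(1)$. The crux is recognising that $\Set$-indexed power preservation, applied to these projection tuples over the index set $M\times M$, is exactly the assertion that a single uniform pair $(s,t)$ witnesses the connectedness of $a$ and $b$ simultaneously for all $a,b$, and that right Ore is precisely what collapses a general connecting zigzag to this one uniform cospan.
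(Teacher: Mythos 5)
Your proposal is correct, and on the functorial side it is essentially the paper's own proof: your cycle $(1)\Rightarrow(6)\Rightarrow(7)\Rightarrow(8)\Rightarrow(9)\Rightarrow(1)$ uses exactly the same ingredients, namely Lemma \ref{lem:lconnected} together with the tensor-hom adjunction of Proposition \ref{prop:adjunction} to exhibit the left adjoint $S \mapsto lM \times \Delta(S)$, and, for $(9)\Rightarrow(1)$, Proposition \ref{prop:cc2} to force the right Ore condition followed by evaluating indecomposability of a large power of $M$ at two ``projection tuples'' (the paper indexes its power by $M$ rather than $M\times M$; immaterial). Your explicit check that under right Ore the single-cospan relation $\approx$ is already an equivalence relation, so that connectedness of the power is witnessed by one uniform cospan, is a step the paper leaves implicit by leaning on Theorem \ref{thm:deMorgan}; making it explicit is an improvement. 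Where you genuinely diverge is in the treatment of the algebraic conditions: the paper runs a chain $(1)\Rightarrow(2)\Rightarrow(3)\Rightarrow(4)\Rightarrow(1)$, whereas you attach $(2),(3),(4),(5)$ to $(1)$ by separate two-way arguments (with a retract argument for $(3)\Rightarrow(1)$) and add the representability link $(3)\Leftrightarrow(7)$, which is not in the paper. Your arrangement is arguably more robust: the paper's printed proof of $(2)\Rightarrow(3)$ actually begins ``let $l$ be a left-absorbing element'', i.e.\ it invokes $(1)$ rather than $(2)$, so the chain is circular at that point; and your insistence on reading ``minimal'' as \emph{least} is not pedantry but necessary, since with the order-theoretic reading condition $(2)$ holds for the three-element monoid $\{1,a,b\}$ with $a,b$ right absorbing (the ideal $\{a\}$ is minimal with trivial endomorphism monoid) even though that monoid has no left absorbing element. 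Two spots are compressed but completable: ``$lM\times(-)$ preserves all limits'' should of course say \emph{connected} limits, as your own parenthesis concedes; and the converse $(4)\Rightarrow(1)$ needs, beyond the retract computation giving $ax=x$ for all $a,x\in A$, one further use of leastness --- $A\subseteq mA$ gives $a=ma'$ with $a'\in A$, whence $mx=(ma')x=ax=x$ --- to upgrade ``every element of $A$ is a left identity on $A$'' to ``left absorbing''.
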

\begin{proof}
($1 \Rightarrow 2$) Any non-empty right ideal of $M$ must contain the collection of left-absorbing elements, and the subset of left-absorbing elements is a right ideal of $M$. Moreover, any $M$-endomorphism of an ideal of $M$ must fix the set of left-absorbing elements (since $f(l) = f(l) \cdot l = l$ for any left-absorbing $l$), so this minimal ideal has no non-trivial endomorphisms.

($2 \Rightarrow 3$) Let $l$ be a left-absorbing element. Then the set of left-absorbing elements can be written as $lM$, in particular it is indecomposable projective as a right $M$-set. We claim that $lM$ is an initial object in the category of indecomposable right $M$-sets. Take an indecomposable right $M$-set $X$. Note that morphisms $lM \to X$ correspond to elements of $Xl \cong X \otimes_M Ml$. Because $Ml = 1$ as a left $M$-set, we have $Xl = C(X)=1$, so there is a unique morphism $lM \to X$.

($3 \Rightarrow 4$) Let $A$ be the initial object in the category of indecomposable right $M$-sets. Then for every indecomposable right $M$-set $X$, there is a unique morphism $f: A \to X$. The image $Q$ of $f$ is necessarily contained in every indecomposable subobject of $X$, which in turn implies that it is contained in every subobject. Further, the unique morphism $\pi : A \to Q$ is an epimorphism, so for any endomorphism $g : Q \to Q$ we have $g \circ \pi = \pi$, which shows $g = 1$. 

($4 \Rightarrow 1$) Consider the minimal non-empty subobject $A$ of $M$. For arbitrary $m \in M$, take the morphism $f: A \to M$, $a \mapsto ma$. Then $f(A)$ contains $A$ and $f^{-1}(A) = A$ by minimality of $A$. So $f$ defines an endomorphism of $A$, which is trivial by assumption. This shows that $ma=a$, for all $m \in M$ and all $a \in A$. In other words, each element of $A$ is a left-absorbing element. 

($1 \Leftrightarrow 5$) Recall that the category of essential points can be identified with the category of indecomposable projective left $M$-sets. If $l$ is a left absorbing element, then the terminal left $M$-set is projective, since we can write it as $1 = Ml$. Conversely, if the category of essential points has a terminal object, then $1$ is projective, so there is an idempotent $e \in M$ such that $1 = Me$. But then $e$ is a left absorbing element.

($1 \Rightarrow 6$) By Lemma \ref{lem:lconnected}, if $l$ is a left-absorbing element of $M$ then $C = \HOM_M(lM,-)$, so $C$ preserves limits. It follows from the Special Adjoint Functor Theorem that $C$ has a left adjoint $c$. Using Proposition \ref{prop:adjunction}, we know that $c(X) \cong X \times lM$, whence it preserves connected limits. Note that it preserves products if and only if $lM$ has a single element (so $l$ is a zero element).

($6 \Rightarrow 7 \Rightarrow 8 \Rightarrow 9$) These implications are trivial.

($9 \Rightarrow 1$) By Proposition \ref{prop:cc2}, if $\Setswith{M}$ is power-connected, then $M$ must satisfy the right Ore condition. Consider the product of $|M|$ copies of $M$ in $\Setswith{M}$. This is indecomposable by assumption, so there are elements $s,t \in M$ such that $(m)_{m \in M} \cdot s = (1)_{m \in M} \cdot t$, which is to say such that $ms = t$ for all $m \in M$. Taking $m=1$ we have that $s=t$ is a left-absorbing element, as required.
\end{proof}

The equivalence of conditions 1 and 7 appears as Proposition 3.9 of \cite{sedaghatjoo-khaksari}; we underline once again that their `right zero' elements are our `left absorbing' elements.

\begin{remark}
Between the preservation of finite products by $C$ in Proposition \ref{prop:strongcon} and the preservation of arbitrary products in Theorem \ref{thm:labsorb}, we can also investigate intermediate sizes of products, which can be equivalently stated as the requirement that colimits over $M\op$ commute with such products (see the discussion in Section \ref{ssec:strong-connectedness}). Surprisingly, by \cite[Theorem 3.1]{adamek-boubek-velebil} for an arbitrary small category $\Dcal$, commuting with even countably infinite products forces commutation with equalizers and hence with all equally large limits. That is, we may as well expand our considerations from products to arbitrary limits. We can also conclude that the equivalence ($7 \Leftrightarrow 8$) in Theorem \ref{thm:labsorb} is true in general for presheaf toposes.

We recall some classical terminology. Let $\kappa$ be a regular cardinal. Then a \textbf{$\kappa$-small category} is a category for which the collection of morphisms is a set of cardinality \textit{strictly smaller} than $\kappa$. Further, \textbf{$\kappa$-small limits} are limits of diagrams indexed by $\kappa$-small categories. For example, and $\omega$-small limits are finite limits, for $\omega = |\Nbb|$. We say that a category $\Dcal$ is \textbf{$\kappa$-filtered} if every diagram $F : I \to \Dcal$, with $I$ a $\kappa$-small category, has a cone over it; this is equivalent to $\Dcal$-colimits commuting with $\kappa$-small limits. Dually, $\Dcal$ is \textbf{$\kappa$-cofiltered} if $\Dcal\op$ is $\kappa$-filtered.

It follows that $C$ preserves $\kappa$-small limits if and only if $M$ has the property that for any family $\{m_i\}_{i \in I}$ with $|I|<\kappa$, there is an $m$ such that:
\begin{equation*}
m_i m = m_j m
\end{equation*}
for all $i,j \in I$. We may call monoids with this property \textbf{right $\kappa$-collapsible}.

Suppose that $M$ is right $\kappa$-collapsible for some $\kappa > |M|$. Then there is some $l \in M$ such that $ml = m'l$ for all $m,m' \in M$, so by taking $m' = 1$ we see that $l$ is left absorbing (see also the proof of Theorem \ref{thm:labsorb}). However, it is possible that $M$ is $|M|$-collapsible but does not have a left-absorbing element. We can construct examples as follows. Let $\kappa$ be a regular cardinal. Then we can identify $\kappa$ with the set of ordinals of cardinality strictly smaller than $\kappa$. The union of two ordinals in $\kappa$ is still in $\kappa$, so the union defines a (commutative idempotent) monoid structure on $\kappa$. Now let $\{\alpha_i\}_{i \in I}$ be a family of ordinals in $\kappa$, with $|I|<\kappa$. Then the union $\alpha = \bigcup_{i \in I} \alpha_i$ is again in $\kappa$, because $\kappa$ is regular. Clearly, $\alpha_i \cup \alpha = \alpha \cup \alpha_j$ for all $i,j \in I$. So this monoid is right $\kappa$-collapsible, but it does not have a left absorbing element.
\end{remark}

\subsection{Zero Elements}

\begin{dfn}
Let $f:\Fcal \to \Ecal$ be a connected, locally connected geometric morphism. We say $f$ is \textbf{bilocal} if it is both local and colocal. We say $f$ is \textbf{bipunctually locally connected} if it is both punctually and copunctually locally connected. As usual, we say a Grothendieck topos has these properties if its global sections morphism does.
\end{dfn}

In \cite[Definitions 1 and 2]{cohesion}, Lawvere introduced the terms \textbf{of quality type} and \textbf{category of cohesion} over a base category. For Grothendieck toposes over $\Set$, these respectively coincide with the bipunctual local connectedness presented above and condition \ref{item:cohesion} of Theorem \ref{thm:zero} below, so that in the case of toposes of the form $\Setswith{M}$ they coincide.

\begin{thm}[Conditions for $\Setswith{M}$ to be bilocal]
\label{thm:zero}
Let $M$ be a monoid, $\Gamma$ and $C$ the usual functors and $\alpha:\Gamma \to C$ the natural transformation of Lemma \ref{lem:alpha}. The following are equivalent:
\begin{enumerate}
	\item $M$ has a zero element.
	\item $\Gamma$ is full.
	\item $\alpha$ is a split monomorphism.
	\item $\alpha$ is an isomorphism: $\Setswith{M}$ is bilocally punctually connected.
	\item $\Gamma$ has a right adjoint and $C$ has a left adjoint: $\Setswith{M}$ is bilocal.
	\item $\alpha$ is epic and $C$ preserves $\Set$-indexed powers. \label{item:cohesion}
	\item $\Gamma$ has a right adjoint and $C$ preserves monomorphisms.
	\item $C$ has a left adjoint which preserves the terminal object of $\Set$.
\end{enumerate}
\end{thm}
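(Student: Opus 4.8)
The plan is to exploit the observation (Definition \ref{dfn:absorb}) that a zero element is precisely an element which is simultaneously left and right absorbing, so that condition (1) is the conjunction of the defining hypotheses of Theorem \ref{thm:rabsorb} (a right absorbing element, equivalently localness) and Theorem \ref{thm:labsorb} (a left absorbing element, equivalently colocalness). Almost every remaining condition factors as a ``local-type'' property conjoined with a ``colocal-type'' one, and can therefore be read off by pairing the relevant clauses of those two theorems together with Theorem \ref{thm:deMorgan} (the right Ore condition). I would prove everything equivalent to (1), treating the two $\alpha$-conditions as a short cycle through (1).

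First I would dispatch the conditions that split cleanly. For (5), bilocal means local and colocal, so by Theorems \ref{thm:rabsorb} and \ref{thm:labsorb} this is equivalent to $M$ having both a right and a left absorbing element, which by Definition \ref{dfn:absorb} coincides with having a (unique) zero element. For (6), $\alpha$ epic is equivalent to a right absorbing element (Theorem \ref{thm:rabsorb}) and $C$ preserving $\Set$-indexed powers is equivalent to a left absorbing element (Theorem \ref{thm:labsorb}), so their conjunction is again (1). For (8), a left adjoint $c$ to $C$ exists exactly when $M$ has a left absorbing element $l$, in which case $c(X) \cong X \times lM$ from the proof of Theorem \ref{thm:labsorb}; then $c$ preserves the terminal object iff $c(1) = lM$ is a singleton, i.e.\ iff $l$ is also right absorbing, i.e.\ iff $l$ is a zero element.

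For the conditions on $\alpha$ I would run the cycle (1)$\Rightarrow$(4)$\Rightarrow$(3)$\Rightarrow$(1). If $z$ is a zero element then $\Gamma(X) = Xz$ by Lemma \ref{lem:rfixed} and $C(X) \cong Xz$ by Lemma \ref{lem:lconnected}, and $\alpha_X$ is exactly this shared identification (a fixed point $p$ satisfies $pz = p$), hence an isomorphism, giving (4); and an isomorphism is a split monomorphism, giving (3). For (3)$\Rightarrow$(1), evaluating the natural split monomorphism $\alpha$ at the regular representation $M$ (which is connected, so $C(M) = 1$) shows $\alpha_M \colon \Gamma(M) \to 1$ is a split mono, forcing $\Gamma(M)$ --- the set of right absorbing elements of $M$ --- to be a single point $\{r\}$; since $mr$ is again right absorbing for every $m$, we get $mr = r$, so $r$ is also left absorbing and hence a zero element. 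The same ``uniqueness forces two-sidedness'' argument handles (7): there $\Gamma$ having a right adjoint gives a right absorbing element (Theorem \ref{thm:rabsorb}) while $C$ preserving monomorphisms gives the right Ore condition (Theorem \ref{thm:deMorgan}), and right Ore makes the right absorbing element unique, whence it is a zero; conversely a zero element $z$ yields right Ore since $m_1 z = z = m_2 z$ for all $m_1, m_2$.

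The one genuinely separate argument, and the step I expect to be the main obstacle, is (2). The implication (1)$\Rightarrow$(2) is a direct calculation: given a zero element $z$ and a function $g \colon \Gamma(X) \to \Gamma(Y)$, the assignment $\bar g(x) = g(xz)$ is $M$-equivariant (using $mz = z$ on the source and fixedness of $g(xz)$ on the target) and satisfies $\Gamma(\bar g) = g$. For the converse (2)$\Rightarrow$(1), fullness of $\Gamma$ is the one hypothesis not already phrased as a local/colocal/Ore property, so I would invoke the standard adjunction fact, dual to the one used for $C$ in the proof of Theorem \ref{thm:rabsorb}, that a right adjoint is full iff the counit of $\Delta \dashv \Gamma$ is a pointwise split monomorphism. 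Evaluating at $X = M$, the counit $\Delta\Gamma(M) \hookrightarrow M$ must split; since $M$ is connected, any equivariant retraction onto the trivial $M$-set $\Delta\Gamma(M)$ is constant, which simultaneously forces $\Gamma(M)$ to be non-empty and to have at most one element. As before, a unique right absorbing element is automatically a zero element, completing the web of equivalences.
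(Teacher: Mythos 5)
Your proposal is correct, and every step checks out against the background results it cites; but it is organised differently from the paper's proof and diverges on several key implications, so a comparison is worthwhile. The paper runs a single cycle $(1 \Rightarrow 2 \Rightarrow 3 \Rightarrow 4 \Rightarrow 5 \Rightarrow 7 \Rightarrow 1)$ with $(6)$ and $(8)$ attached to $(5)$, whereas you build a star around condition $(1)$, systematically factoring each condition into a ``local'' clause (Theorem \ref{thm:rabsorb}) and a ``colocal'' clause (Theorem \ref{thm:labsorb}). The most substantive differences: for $(7) \Rightarrow (1)$ the paper goes through Lemma \ref{lem:localstrong}, Proposition \ref{prop:totally2} and Theorem \ref{thm:totally} to get right collapsibility, then collapses two right absorbing elements; you instead read off the right Ore condition from Theorem \ref{thm:deMorgan} and note that distinct right absorbing elements $r, r'$ have disjoint principal ideals $\{r\}, \{r'\}$ --- a shorter and more elementary route. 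For the $\alpha$-conditions, the paper upgrades ``split mono'' to ``iso'' by first extracting a right absorbing element and then citing Theorem \ref{thm:rabsorb} for epicness, while you prove $(1) \Rightarrow (4)$ directly from Lemmas \ref{lem:rfixed} and \ref{lem:lconnected} (both identify $\Gamma(X)$ and $C(X)$ with $Xz$, and $\alpha$ is the identity under these identifications) and close $(3) \Rightarrow (1)$ by evaluating at $M$ to force $\Gamma(M)$ to be a singleton. Your recurring device --- a \emph{unique} right absorbing element is automatically two-sided, since $mr$ is again right absorbing --- also replaces the paper's abstract adjoint-uniqueness argument for $(8)$ with the explicit formula $c(X) \cong X \times lM$. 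What the paper's cycle buys is economy (each condition is visited once, and the implications exhibit how the properties strengthen one another); what your star buys is transparency about which half of the zero-element hypothesis each topos-theoretic condition encodes, plus genuinely simpler arguments for $(7)$ and the $\alpha$-cycle. The two proofs do share their core ingredients: the explicit lifting construction for $(1) \Rightarrow (2)$, the Mac Lane fact that a right adjoint is full iff the counit is a pointwise split monomorphism, and the reliance on Theorems \ref{thm:rabsorb} and \ref{thm:labsorb} for $(5)$ and $(6)$.
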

\begin{proof}
($1 \Rightarrow 2$) Each component of each $M$-set has a unique fixed point, obtained by acting by the unique zero element. Given a morphism $g: \Gamma(X) \to \Gamma(Y)$ we can map each element of $X$ to the fixed point in the same component and then apply $g$ to obtain an $M$-set homomorphism $X \to Y$ whose image under $\Gamma$ is $g$.

($2\Rightarrow 3 \Rightarrow 4$) Let $\epsilon$ be the counit of $(\Delta \dashv \Gamma)$. Then $\Gamma$ is full if and only if $\epsilon$ is a split monomorphism, which makes $C\epsilon$ and hence $\alpha$ a split monomorphism. In particular, $\alpha_M: \Gamma(M) \to C(M) = 1$ is a split monomorphism, meaning there is a morphism $1 \to \Gamma(M)$ in $\Set$ and so $\Gamma(M)$ is non-empty, which means $M$ has a right absorbing element and $\alpha$ must be epic and hence an isomorphism.

($4 \Rightarrow 5$) Since $\Gamma$ and $C$ are naturally isomorphic, $\Delta$ is a right and left adjoint to both of them.

($5 \Leftrightarrow 6$) By Theorem \ref{thm:rabsorb}, we have that $\alpha$ is epic if and only if $\Gamma$ has a right adjoint and by Theorem \ref{thm:labsorb} we have that $C$ has a left adjoint if and only if $C$ preserves $\Set$-indexed powers.

($5 \Rightarrow 7$) This is immediate.

($7 \Rightarrow 1$) By Lemma \ref{lem:localstrong}, $M$ being local makes $\Setswith{M}$ strongly connected, and so $C$ also preserving monomorphisms makes $\Setswith{M}$ totally connected and hence $M$ is right collapsible by Proposition \ref{prop:totally2} and Theorem \ref{thm:totally}. Applying right collapsibility to any pair of right absorbing elements shows that they must be equal, so there is a unique right absorbing element, which is thus a zero element.

($5 \Leftrightarrow 8$) One direction follows from the fact that $\Delta$ preserves $1$. Conversely, any left adjoint functor whose domain is $\Set$ is determined by the image of $1$. In particular, if the left adjoint of $C$ preserves $1$ it is naturally isomorphic to $\Delta$, and hence their right adjoints are naturally isomorphic too, so $\Delta$ is also a right adjoint to $\Gamma$.
\end{proof}

Note that ($4 \Leftrightarrow 7$) appears in a more general form as \cite[Proposition 3.7]{PLC}. Observe also that $\Gamma$ being full implies that $C$ is full, but now in an entirely constructive way!

\begin{remark}
Since any monoid with a right-absorbing element has either exactly one, which is necessarily a zero element, or at least two, we have a dichotomy between Lawvere's sufficiently cohesive toposes from Remark \ref{rmk:sufficiently} above and their toposes of quality type. This dichotomy is shown by Menni in \cite[Corollary 4.6]{menni-continuous-cohesion} to hold for arbitrary pre-cohesive presheaf toposes.
\end{remark}

\subsection{Trivialising Conditions}
\label{ssec:trivial}

Many of the conditions encountered in this article suggest lines of investigation for further properties. As it turns out, many of these directions turn out to be dead ends, in the sense that they force the monoid to be trivial. In this section we present a variety of these conditions. As promised in an earlier section, we include some alternative weakenings of the concept of cartesian-closedness in this list.

\begin{dfn}
Suppose $F:\Fcal \to \Ecal$ is a functor between toposes which preserves products. $F$ is \textbf{sub-cartesian-closed} if the comparison morphisms $\theta_{P,Q}$ of \eqref{eq:theta} in Section \ref{ssec:toposes} are monomorphisms for every pair $P,Q$ of objects of $\Fcal$. If $F$ moreover preserves monomorphisms and the comparison morphism $\chi$ of \eqref{eq:chi} is a monomorphism, we say $F$ is \textbf{sublogical}.
\end{dfn}

Sublogical functors appear in the definition of \textbf{open} geometric morphisms: a geometric morphism is called open if its inverse image functor is sub-cartesian-closed. We therefore refer the reader once again to Johnstone \cite[Section C3.1]{Ele} for background on this concept, where a different but equivalent definition is given. Since any hyperconnected morphism is open, \cite[Corollary C3.1.9]{Ele}, we have that $\Delta$ is sub-logical for any monoid $M$.

\begin{thm}[Conditions for $\Setswith{M}$ to be equivalent to $\Set$]
\label{thm:trivial}
Let $M$ be a monoid, $\Setswith{M}$ its topos of right actions, and $\Gamma, \Delta, C$ the usual functors. The following are equivalent:
\begin{enumerate}
	\item $M$ is the trivial monoid.
	\item The geometric morphism $(\Delta \dashv \Gamma)$ is an equivalence.
	\item $\Gamma$ is full and faithful, or the above geometric morphism is an inclusion of toposes.
	\item $\Gamma$ is faithful.
	\item The geometric morphism $(\Delta \dashv \Gamma)$ is localic.
	\item $C$ is full and faithful.
	\item $C$ is faithful.
	\item $\Gamma$ is cartesian-closed or logical.
	\item $\Gamma$ is sub-cartesian-closed or sub-logical.
	\item $\Delta$ is logical and $C$ preserves products.
	\item $C$ is logical, cartesian-closed, sub-logical or sub-cartesian-closed. \label{item:trivial10}
	\item $\Gamma$ reflects binary coproducts or binary products or the terminal object or monomorphisms.
\end{enumerate}
\end{thm}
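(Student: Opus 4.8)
The plan is to route everything through condition $(1)$. That $(1)$ implies all the rest is immediate: for trivial $M$ the topos $\Setswith{M}$ is $\Set$ and $\Gamma$, $\Delta$, $C$ are all isomorphic to the identity, so every listed property holds. For the converses I would isolate two workhorse lemmas. \emph{Lemma A}: $\Gamma$ is faithful iff $M$ is trivial. Since $\Delta\dashv\Gamma$, faithfulness of $\Gamma$ is equivalent to the counit $\Delta\Gamma\to\id$ being componentwise epic, i.e.\ surjective; testing on the regular representation $M$ forces every element to be a fixed point, so $mn=m$ for all $m,n$, and taking $m=1$ gives triviality. \emph{Lemma B}: $C$ is faithful iff $M$ is trivial, dually, because $C\dashv\Delta$ and faithfulness of $C$ is equivalent to the unit $\id\to\Delta C$ being monic, which on $M$ forces $m=1$ for all $m$. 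Now $(4)$ and $(7)$ are Lemmas A and B verbatim; $(3)$ (full and faithful, equivalently the morphism is an inclusion) and $(6)$ are a fortiori faithful and reduce to A and B. For $(9)$ I would first note that, since $\Gamma$ preserves $\Omega$ (hyperconnectedness) and monomorphisms (being a right adjoint), the comparison $\chi$ is an isomorphism and so sub-logical and sub-cartesian-closed coincide for $\Gamma$; then I would identify $\theta^{\Gamma}_{P,Q}\colon \Gamma(Q^P)\to\Gamma(Q)^{\Gamma(P)}$ with the restriction-to-fixed-points map $\HOM_M(P,Q)\to\Hom_{\Set}(\Fix_M P,\Fix_M Q)$, $\phi\mapsto\Gamma(\phi)$. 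Demanding all of these be monic \emph{is} the statement that $\Gamma$ is faithful, so $(9)$ also reduces to Lemma A.

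The second cluster collects the equivalence/logical conditions. $(2)$ yields an equivalence, hence $\Gamma$ faithful, hence triviality by A. For $(5)$, the global-sections morphism is hyperconnected, and a morphism that is both hyperconnected and localic is an equivalence (these two classes meet only in equivalences, cf.\ \cite[A4.6]{Ele}), so $(5)\Rightarrow(2)$. For $(8)$, $\Gamma$ already preserves finite products and monomorphisms (right adjoint) and $\Omega$ (hyperconnected), so logical, cartesian-closed, and preservation of exponentials coincide for $\Gamma$; and a geometric morphism whose direct image is logical is an equivalence (\cite[A4.6.7]{Ele}, already invoked in Section~\ref{ssec:toposes}), giving triviality.

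The $C$-side conditions use the earlier results on $C$. For $(10)$: $\Delta$ is logical iff it preserves $\Omega$ iff $M$ is a group (Theorem~\ref{thm:atomic}), while a nontrivial group has $M\times M$ decomposable (Example~\ref{xmpl:groupnotstrong}), so $C$ cannot preserve products; thus $M$ is trivial. For $(11)$, all four variants feed into Propositions~\ref{prop:cc1} and \ref{prop:cc2}. If $C$ is cartesian-closed then it is power-connected, so by the contrapositive of Proposition~\ref{prop:cc2} $M$ satisfies the right Ore condition, whereupon $C(M^M)\cong 1$ (that is, $M^M$ indecomposable) together with right Ore forces triviality by the argument of Proposition~\ref{prop:cc1}; logical is stronger. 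For sub-cartesian-closed (which presupposes $C$ preserves products, i.e.\ strong connectedness), I would extract the right Ore condition from the single comparison $\theta_{\Delta(\Nbb),X}$ whose codomain is $C(X)^{\Nbb}\cong 1$: were $M$ not right Ore, Proposition~\ref{prop:cc2} makes $X^{\Delta(\Nbb)}$ decomposable and that map non-monic, a contradiction; then $\theta_{M,M}$ monic gives $M^M$ indecomposable and Proposition~\ref{prop:cc1} finishes. Sub-logical implies sub-cartesian-closed and moreover forces right Ore directly through preservation of monomorphisms (Theorem~\ref{thm:deMorgan}), so it reduces likewise.

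The final item $(12)$ is the most computational and the main obstacle, since reflection of limits and colimits is not formal and must be defeated by explicit examples when $M$ is nontrivial. Reflecting binary products implies reflecting the terminal object (take both factors to be $1$), so it suffices to treat reflection of the terminal object, of monomorphisms, and of binary coproducts. For the terminal object I would exhibit $X\not\cong 1$ with $\Gamma(X)\cong 1$: if $M$ has no right absorbing element take $X=1\sqcup M$, so $\Gamma(X)=1\sqcup\varnothing$; if it does, take $X$ to be $M$ with all its fixed points collapsed to a single point, using Lemma~\ref{lem:rfixed} to see $\Gamma(X)\cong 1$ while $|X|\geq 2$. For monomorphisms the uniform witness is the fold map $M\sqcup_{\Fix_M(M)}M\to M$ (two copies of $M$ glued along their common fixed points): it is not monic, since the two copies of $1$ stay distinct for nontrivial $M$, yet it restricts to the identity on fixed points, so $\Gamma$ of it is monic. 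For binary coproducts I would use the degenerate coproduct with an initial summand to reduce to failure of conservativity of $\Gamma$ (a map inducing a bijection on fixed points but not an isomorphism), again splitting on whether a right absorbing element exists. The delicacy I expect to cost the most care is making these counterexamples genuinely uniform across the two regimes (fixed points always present versus always absent) and checking in each case that $\Gamma$ really sends the chosen diagram to one with the target universal property in $\Set$.
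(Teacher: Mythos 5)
Your proposal is correct and covers all twelve conditions, but its architecture differs from the paper's in ways worth noting. The paper proves implication cycles, sending $(4)$ and $(7)$ through $(5)$ via the general facts that an epic counit (resp.\ monic unit) makes the geometric morphism localic and that hyperconnected localic morphisms are equivalences; you instead route everything through $(1)$ using two computational lemmas, evaluating the counit of $(\Delta \dashv \Gamma)$ and the unit of $(C \dashv \Delta)$ on the regular representation $M$ to extract triviality directly (you still invoke hyperconnected plus localic implies equivalence, but only for $(5)$ itself, exactly as the paper does). Your treatment of $(9)$ is genuinely different: you identify the comparison map $\theta_{P,Q}$ for $\Gamma$ with the restriction map $\HOM_M(P,Q) \to \Hom_{\Set}(\Gamma(P),\Gamma(Q))$, $\phi \mapsto \Gamma(\phi)$, so that sub-cartesian-closedness of $\Gamma$ \emph{is} faithfulness of $\Gamma$; the paper instead cites \cite[Proposition C3.1.8]{Ele} to equate sublogicality of the direct image with the morphism being an inclusion. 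Conditions $(8)$, $(10)$ and $(11)$ you handle essentially as the paper does (via \cite[A4.6.7]{Ele}, via Theorem \ref{thm:atomic} plus Example \ref{xmpl:groupnotstrong}, and via the arguments of Propositions \ref{prop:cc2} and \ref{prop:cc1} respectively). For $(12)$ your two witnesses for failure of terminal-object reflection ($1 \sqcup M$ when there is no right absorbing element, and $M$ with its fixed points collapsed otherwise) coincide with the paper's, but your reductions --- binary products to the terminal object by taking both factors equal to $1$, and binary coproducts to conservativity of $\Gamma$ via a coproduct with an empty summand --- together with your uniform non-monic witness $M \sqcup_{\Fix_M(M)} M \to M$, whose image under $\Gamma$ is the identity on $\Fix_M(M)$, streamline the paper's case-by-case substitutions of diagrams. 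Overall the paper's route is more economical in leaning on Johnstone's general theory, while yours is more elementary and self-contained, and it isolates the slightly sharper observation that faithfulness, sub-cartesian-closedness and sublogicality of $\Gamma$ coincide for every monoid.
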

\begin{proof}
($1 \Leftrightarrow 2$) This is immediate after noting that the trivial monoid is the only monoid which can represent $\Set$ as a presheaf topos.

($2 \Leftrightarrow 3 \Rightarrow 4 \Rightarrow 5 \Rightarrow 2$) Since any equivalence is an inclusion and $\Gamma$ is faithful if and only if the counit of $(\Delta \dashv \Gamma)$ is epic, which is sufficient to make the geometric morphism localic. But a geometric morphism which is hyperconnected and localic is automatically an equivalence.

($2 \Rightarrow 6 \Rightarrow 7 \Rightarrow 5$) The components of an equivalence are always full and faithful. The second implication is trivial, and $C$ is faithful if and only if the unit of $(C \dashv \Delta)$ is a monomorphism, which is again sufficient to make the geometric morphism localic.

($3 \Leftrightarrow 8 \Rightarrow 9 \Leftrightarrow 3$) Since $\Gamma: \Setswith{M} \to \Set$ always preserves the subobject classifier by \cite[Proposition A4.6.6(v)]{Ele}, it is logical if and only if it is cartesian-closed, and the latter is equivalent to the global sections morphism being full and faithful by Lemma A4.2.9 there. Being sub-logical (or equivalently sub-cartesian-closed) is an apparently weaker condition, but is still equivalent to the geometric morphism being an inclusion by \cite[Proposition C3.1.8]{Ele}.

($1 \Leftrightarrow 10$) This is the content of Example \ref{xmpl:groupnotstrong}.

($2 \Rightarrow 11 \Rightarrow 1$) The components of an equivalence are always logical. Conversely, we observed in the proof of Theorem \ref{thm:deMorgan} that $C(\Omega)$ always has one or two elements, so that the comparison morphism $\chi$ for $C$ is always monic. Being sublogical is therefore equivalent to being sub-cartesian-closed. The remaining implication is contained in the proofs of Propositions \ref{prop:cc2} and \ref{prop:cc1}, since in both cases we actually showed that one of the comparison morphisms failed to be monic.

($2 \Rightarrow 12 \Rightarrow 1$) All of the functors involved in an equivalence preserve and reflect all limits and colimits. If $\Gamma$ reflects coproducts, consider two cases. First, if $\Gamma(M) = \emptyset$, then $\Gamma(1 \sqcup M) = \Gamma(1) \sqcup \Gamma(\varnothing)$ forces $1\sqcup M \cong 1\sqcup \varnothing$, a contradiction. On the other hand, if $\Gamma(M)$ is non-empty then $M$ has some right absorbing elements; consider the $M$-set $X$ obtained by identifying them all. Then in particular $\Gamma(X) = 1 = \Gamma(1)$ and hence $\Gamma(X\sqcup 1) \cong \Gamma(1) \sqcup  \Gamma(1)$ forces $X\sqcup 1 \cong 1\sqcup 1$ and hence $X \cong 1$; it follows that every element of $M$ was right-absorbing and so $M \cong 1$ as required. The arguments for binary products, the terminal object or monomorphisms are similar: replace the reflected coproducts with the reflections of $\Gamma(M \times 1) = \Gamma(\varnothing) \times \Gamma(1)$ or $\Gamma(X \times 1) = \Gamma(1) \times \Gamma(1)$ in the first case; $\Gamma(M\sqcup 1) = \Gamma(1)$ and $\Gamma(X) = \Gamma(1)$ in the second case and with the morphisms $M \too 1$ and $X \too 1$ in the final case.
\end{proof}

\section{Conclusion}
\label{sec:conclusion}

We can summarise some of the properties and results obtained in this paper in Table \ref{table:results1} and Table \ref{table:results2}.
\begin{table}[h!] 
\caption{Summary of results regarding $\Gamma$} \label{table:results1}
\begin{tabularx}{\linewidth}{Z|ZZZZ} 
\toprule[0.1em]
Topos property     & Topological property        & Monoid property                   & $\Gamma$ preserves                                          & Examples                                                                   \\ \midrule[0.1em]
Local              & $\exists$ focal point       & $\exists$ right \mbox{absorbing} \mbox{element} & all colimits (equiv. \mbox{epimorphisms})  & multiplicative monoid of a ring, $\End(S)$                        \\ \hline
Strongly compact   & e.g.\ spectral or compact Hausdorff              & right-factorably finitely generated   & filtered colimits                               & as in the cell above, as well as finitely generated monoids                \\ 
\bottomrule[0.1em]
\end{tabularx}
\end{table}

\begin{table}[h!] 
\caption{Summary of results regarding $C$} \label{table:results2}
\begin{tabularx}{\linewidth}{Z|ZZZZ} 
\toprule[0.1em]
Topos property     & Topological property        & Monoid property    & $C$ preserves                         & Examples                                                                   \\ \midrule[0.1em]
Colocal            & $\exists$ open dense point  & $\exists$ left absorbing element                                   & all limits (equiv. products)          & $\End(S)\op$                                                      \\ \hline
Totally connected  & irreducible                 & right collapsible                                     & finite limits (equiv. equalizers)     & $\End(S)\op$, $(\mathbb{N},\mathrm{max})$                         \\ \hline
Strongly connected & irreducible                 & $M \times M$ is indecomposable                            & finite products                       & $\End(S)\op$, $\End(S)$                                  \\ \hline
de Morgan          & extremally disconnected     & right Ore                                                & monomorphisms                         & $\End(S)\op$, any commutative monoid \\ \bottomrule[0.1em]
\end{tabularx}
\end{table}

\subsection{Notable Omissions}

This article is far from an exhaustive presentation of what topos-theoretic properties mean for toposes of the form $\Setswith{M}$ and the monoids presenting them: we have chosen to focus on those properties expressible in terms of the functors constituting the global sections geometric morphism. Other properties which the authors are already investigating include:
\begin{itemize}
\item \textit{Classifying topos properties}: When is $\Setswith{M}$ the classifying topos of a regular or coherent theory? More generally, what properties of theories classified by $\Setswith{M}$ and their categories of models can be deduced from properties of $M$ and vice versa?
\item \textit{Diagonal properties}: Since the (2-)category of toposes and geometric morphisms has pullbacks, any geometric morphism $\Fcal \to \Ecal$ induces a diagonal $\Fcal \to \Fcal \times_{\Ecal} \Fcal$. We may in particular apply this to the global sections morphism, to express properties such as \textit{separatedness} of a topos (cf. \cite[Definition C3.2.12(b)]{Ele}). However, a more detailed understanding of geometric morphisms between toposes of the form $\Setswith{M}$ is needed to analyse these.
\item \textit{Relative properties}: Some properties of (Grothendieck) toposes are most succinctly expressed by the existence of geometric morphisms of a particular type to or from toposes with certain properties (see below for an example).
\item \textit{Categorical properties}: There are some categorical properties of Grothendieck toposes that ostensibly aren't expressible in terms of the global sections morphism in a straightforward way, although they might be expressible in the relative sense above. These include every object having a certain property, or there being `enough' (a separating set of) objects with a particular property.
\end{itemize}

Each of these classes merits a systematic study in its own right. We briefly mention illustrative examples of the latter two classes of properties, corresponding to more elementary and well-studied properties of monoids.

We say that a Grothendieck topos $\Ecal$ is an \textbf{\'etendue} if there is an object $X$ of $\Ecal$ such that the slice topos $\Ecal/X$ is localic, i.e.\ such that $\Ecal/X$ is equivalent to the category of sheaves on a locale. This can alternatively be stated as the existence of an atomic geometric morphism to $\Ecal$ from a localic topos. By \cite[Lemma C5.2.4]{Ele}, a presheaf topos $\PSh(\Ccal)$ is an \'etendue if and only if every morphism in the category $\Ccal$ is a monomorphism. For a monoid $M$, it follows that $\Setswith{M}$ is an \'etendue if and only if for all $a,b,m \in M$, the equality $ma=mb$ implies that $a = b$. Monoids with this property are usually called \textbf{left cancellative}. 

An object $A$ of a topos is called \textbf{decidable} if the diagonal subobject $A \hookrightarrow A \times A$ has a complement. In particular, if $A$ is a right $M$-set, then $A$ is decidable if for two distinct elements $a,b \in A$ we have $a \cdot m \neq b \cdot m$ for all $m \in M$. Subobjects of decidable objects are again decidable. We say a topos is \textbf{locally decidable} if every object is a quotient of a decidable object. By \cite[Remark C5.4.3]{Ele}, $\Setswith{M}$ is locally decidable if and only if $\Setswith{M\op}$ is an \'{e}tendue. So $\Setswith{M}$ is locally decidable if and only if for all $a,b,m \in M$, the equality $am=bm$ implies that $a = b$, which is to say that $M$ is \textbf{right cancellative}, dually to the above.

In the other direction, there are some notable elementary properties of monoids which we have not yet found a topos-theoretic equivalent for. The most basic is the left Ore condition, dual to Definition \ref{dfn:rOre}; of course, we could simply examine the category of \textit{left} actions of our monoid, and dualize the results presented in this paper, but believe it will be more informative to seek a condition intrinsic to the topos of right actions, given the variety of equivalent conditions we reached in Theorem \ref{thm:deMorgan}.

The internal logic of a topos is embodied in the structure of its subobject classifier. As we have seen, in $\Setswith{M}$ this is determined by the structure of the right ideals of $M$, but we have only tackled the most basic cases in which $\Setswith{M}$ is a Boolean or de Morgan topos. There are therefore a wide array of algebraic or logical properties of the lattice of right ideals that demand further investigation.

\subsection{Relativisation and Generalisation}

In \cite{TDMA}, an equivalence was demonstrated between the 2-category of monoids, semigroup homomorphisms and `conjugations' and the 2-category of their presheaf toposes, essential geometric morphisms between these and geometric transformations. This means that we can just as systematically explore how properties of semigroup or monoid homomorphisms are reflected as properties of essential geometric morphisms between toposes of the form $\Setswith{M}$. This is a direct extension of the work we have done in this article, since the unique homomorphism $M \to 1$ corresponds under this equivalence to the global sections morphism of $\Setswith{M}$. More generally, we will be able to use the tensor-hom expressions described in Section \ref{sec:bg} for adjunctions between these toposes to give an algebraic interpretation of not-necessarily-essential geometric morphisms, as we did in Corollary \ref{crly:tidiness}. Since such tensor-hom expressions exist for geometric morphisms between presheaf toposes more generally (see \cite[Section VII.2]{MLM}), toposes of monoid actions may provide a good context from which to build an algebraic analysis of geometric morphisms.

One direction that the authors are yet to work on is relativisation. Amongst internal categories in arbitrary toposes, monoids are naturally defined as those whose object of objects is the terminal object. Accordingly, one might be interested in examining toposes of internal right actions of internal monoids relative to a topos other than $\Set$. While many of the results we obtain in this article were arrived at constructively or are expressed in a way that relativises directly, there are some which cannot be transferred directly into an arbitrary topos. For instance, our inductive construction of the submonoid right-factorably generated by $S$ in Lemma \ref{lem:construct} requires the presence of a natural number object, while the application of Proposition \ref{prop:cc2} in Theorem \ref{thm:labsorb} relies on the law of excluded middle. More significantly, the proof that condition \ref{item:rab6} implies condition \ref{item:rab7} in Theorem \ref{thm:rabsorb} explicitly relies on a form of the axiom of choice. This investigation will therefore be non-trivial, and it will be interesting to discover the relative analogues of the results presented here.

\bibliographystyle{amsplainarxiv}
\bibliography{monoidprops}

\end{document}